\numberwithin{equation}{section}
\newcommand{\R}{\ensuremath{\mathbb{R}}}
\newcommand{\Sym}{\ensuremath{\mathbb{SR}}}
\newcommand{\slab}{\ensuremath{B_{\alpha\beta}}}
\newcommand{\yayik}{\ensuremath{Q_{\alpha\beta}}}
\DeclareMathOperator{\Aut}{Aut}
\DeclareMathOperator{\cg}{cg}
\DeclareMathOperator{\co}{conv}
\DeclareMathOperator{\diag}{diag}
\DeclareMathOperator{\ext}{ext}
\DeclareMathOperator{\ce}{CE}
\DeclareMathOperator{\ie}{IE}
\DeclareMathOperator{\st}{s.t.\hspace{0.1cm}}
\DeclareMathOperator{\tr}{tr}
\DeclareMathOperator{\vol}{vol}
\theoremstyle{plain}
\newtheorem{theorem}{Theorem}[section]
\newtheorem{lemma}[theorem]{Lemma}
\newtheorem{corollary}[theorem]{Corollary}
\theoremstyle{definition}
\newtheorem{definition}[theorem]{Definition}
\newtheorem{remark}[theorem]{Remark}
\title{The extremal volume ellipsoids of convex bodies, their symmetry properties,
        and their determination in some special cases}
\author{Osman G{\"u}ler and F\.{i}l\.{i}z G{\"u}rtuna\footnote{Department of Mathematics and Statistics,
University of Maryland, Baltimore County, Baltimore, Maryland 21250, USA.
E-mail: \emph{\{guler,gurtuna1\}@math.umbc.edu}. Research partially
supported by the National Science Foundation under grant DMS--0411955 }}
\date{September 2007}
\begin{document}

\maketitle

\bibliographystyle{acm}

\begin{abstract}
A convex body $K$ in $\R^n$ has associated with it a unique circumscribed ellipsoid $\ce(K)$ 
with minimum volume, and a unique inscribed ellipsoid $\ie(K)$ with maximum volume. We first 
give a unified, modern exposition of the basic theory of these extremal ellipsoids using the 
semi--infinite programming approach pioneered by Fritz John in his seminal 1948 paper.  We then 
investigate the automorphism groups of convex bodies and their extremal ellipsoids.  We show 
that if the automorphism group of a convex body $K$ is large enough, then it is possible to 
determine the extremal ellipsoids $\ce(K)$ and $\ie(K)$ exactly, using either semi--infinite 
programming or nonlinear programming.  As examples, we compute the extremal ellipsoids when the 
convex body $K$ is the part of a given ellipsoid between two parallel hyperplanes, and when $K$ 
is a truncated second order cone or an ellipsoidal cylinder.
\end{abstract}

\vspace{2cm}

\noindent {\bf Key words.} John ellipsoid, L{\"o}wner ellipsoid, inscribed ellipsoid,
        circumscribed ellipsoid, minimum volume, maximum volume, optimality conditions,
        semi--infinite programming, contact points, automorphism group, symmetric convex bodies, 
        Haar measure. \\
\noindent{\bf Abbreviated title:} Extremal ellipsoids  \\
\noindent{\bf AMS(MOS) subject classifications:} primary: 90C34, 46B20, 90C30, 90C46, 65K10;
secondary: 52A38, 52A20, 52A21, 22C05, 54H15.



%

\setcounter{page}{0}
\newpage


\section{Introduction}
\label{sec:introduction}

A \emph{convex body} in $\R^n$ is a compact convex set with nonempty interior.
Let $K$ be a convex body in $\R^n$. Among the ellipsoids circumscribing $K$, there exists a unique one with
minimum volume and similarly, among the ellipsoids inscribed in $K$, there exists a unique one of maximum volume.
These are called the \emph{minimal circumscribed ellipsoid} and \emph{maximal inscribed ellipsoid} of $K$, and we
denote them by $\ce(K)$ and $\ie(K)$, respectively. To our knowledge, Behrend~\cite{Behrend38} is the first person to
investigate these problems, and proves the existence and uniqueness of the two ellipsoids in the plane, that is
when $n=2$. In the general case, the existence of either ellipsoid is easy to prove using compactness. The ellipsoid
$\ce(K)$ is often referred to as \emph{L{\"o}wner ellipsoid} since L{\"o}wner has used its uniqueness
in his lectures, see \cite{DanzerGrunbaumKlee63}.  The uniqueness of $\ce(K)$ also follows from the famous paper of John~\cite{John48}
although he does not state it explicitly.  Subsequently, Danzer, Laugwitz, and Lenz~\cite{DanzerLaugwitzLenz57},
and Zaguskin~\cite{Zaguskin58} prove the uniqueness of both ellipsoids.  The inscribed ellipsoid problem
$\ie(K)$ is often called \emph{John ellipsoid}, and sometimes \emph{L{\"o}wner--John} ellipsoid, especially in
Banach space geometry literature. This designation may seem inappropriate, since John does not consider the problem
$\ie(K)$ in \cite{John48}. However, in Banach space geometry literature, one is mainly interested in symmetric
convex bodies ($K=-K$), and for this class of convex bodies the inscribed and circumscribed ellipsoids are
related by polarity, so that results about one ellipsoid may be translated into a similar statement about the
other.

The two extremal problems $\ce(K)$ and $\ie(K)$ are important in several fields including optimization, Banach space
geometry, and statistics. They also have applications in differential geometry \cite{Laugwitz65}, Lie group theory
\cite{DanzerLaugwitzLenz57}, and symplectic geometry, among others.
The ellipsoid algorithm of Khachian~\cite{Khachian79} for linear programming sparked general interest of optimizers
in the circumscribed ellipsoid problem.  At the $k$th step of this algorithm, one has an ellipsoid $E^{(k)}$ and is interested
in finding $E^{(k+1)}=\ce(K)$ where $K$ is the intersection of $E^{(k)}$ with a half plane whose bounding hyperplane
passes through the center of $E^{(k)}$.  The ellipsoid $E^{(k+1)}$ can be computed explicitly, and the ratio of the
volumes $\vol(E^{(k+1)})/\vol(E^{(k)})$ determines the rate of convergence of the algorithm and gives its
polymomial--time complexity.  From this perspective, ellipsoids covering the intersection of an ellipsoid with two
halfspaces whose bounding hyperplanes are parallel have been studied as well, since the resulting ellipsoid
algorithms are likely to have faster convergence.  The papers {of K{\"o}nig and Pallaschke~\cite{KonigPallaschke81}
and Todd~\cite{Todd82} compute the circumscribing ellipsoid explicitly. It has been discovered that the circumscribed 
ellipsoid problem is dually related to the \emph{optimal design} problem in statistics. Consequently, there has been 
wide interest in the problem $\ce(K)$ and related problems in this community as well. Inscribed ellipsoid problems also 
arise in optimization. For example, the \emph{inscribed ellipsoid method} of 
Tarasov, Khachian, and Erlikh~\cite{TarasovKhachiyanErlikh88} is a polynomial--time algorithm for solving general 
convex programming problems.  In this method, one needs to compute numerically an approximation to the inscribed 
ellipsoid of a polytope which is described by a set of linear inequalities.  

To meet the demand from different fields such as optimization, computer science, engineering, and statistics,
there have been many algorithms proposed to numerically compute the two extremal ellipsoids $\ce(K)$ and $\ie(K)$.
Recently, there has been a surge of research activity in this subject. We do not discuss algorithms in this paper,
but the interested reader can find more information on this topic and the relevant references in the papers 
\cite{ZhangGao03}, \cite{SunFreund04}, and \cite{Yildirim06}.

This paper has three, related goals.  In \S\ref{sec:MVCE} and \S\ref{sec:MVIE}, we give semi--infinite programming
formulations of the problems $\ce(K)$ and $\ie(K)$, respectively, and then describe their fundamental properties using
the resulting optimality conditions. There are essentially no new results in these sections. We include them in order
to fill a gap in the optimization literature, and we also use some of the results in these sections later on in the paper. 
There exists a sizable literature in Banach space theory dealing with the existence, uniqueness, and basic properties 
of the extremal ellipsoids $\ce(K)$ and $\ie(K)$.  They use (necessarily) optimality considerations to arrive at the 
results they are interested in, but this is done in an informal manner. In fact, many of the papers in this literature 
use the reformulation of the ellipsoid problems given in the interesting paper of Lewis~\cite{Lewis79}; see also 
\cite{Pisier89}, \cite{TomczakJaegermann89}. There have been 
some exceptions recently.  See for example the interesting papers Gordon et al.~\cite{GordonLitvakMeyerPajor04}
and Klartag~\cite{Klartag04}.  To our knowledge, the only papers which systematically use optimization techniques
to prove results about the extremal ellipsoids are the original paper of John~\cite{John48} and the papers of
Juhnke~\cite{Juhnke94,Juhnke95,Juhnke04}.  What we offer in \S~\ref{sec:MVCE} and \S\ref{sec:MVIE} is a careful, 
unified, and modern synthesis of the basic results on the ellipsoids $\ce(K)$ and $\ie(K)$ in the mainstream 
optimization literature. We hope that it serves as a useful introduction to the subject in the optimization community.

We devote \S\ref{sec:automorphism-group-of-convex-bodies} to the symmetry properties of convex bodies and the related
symmetry properties of the corresponding ellipsoids $\ce(K)$ and $\ie(K)$. One way to formalize the symmetry properties
of a convex body $K$ is to consider its \emph{(affine) automorphism group} $\Aut(K)$. It will be seen that the uniqueness
of the two ellipsoids imply that the ellipsoids \emph{inherit} the symmetry properties of the underlying convex body $K$. 
That is, $\Aut(K)$ is contained in the automorphism group of the two extremal ellipsoids.  One consequence of this is that if $K$ is
``symmetric'' enough, then either it is possible to anaytically compute the extremal ellipsoids exactly, or else it is possible
to reduce the complexity of their numerical computation. In this section, we demonstrate the former possibility for a class of
convex bodies $K$ whose automorphism groups act \emph{transitively} on $\ext(K)$, the extreme points of $K$.
Davies~\cite{Davies74} shows that for this class of convex bodies, the center of gravity, the center of $\ce(K)$, and the center
of $\ie(K)$ all coincide, and this center can be obtained explicitly as a Haar integral over the automorphism group $\Aut(K)$.
We show that the matrix $X$ in the circumscribed ellipsoid $\ce(K)$ can also be obtained in a similar manner. We remark that there
has been a continuous interest on symmetric convex bodies since antiquity, and the class of \emph{regular polytopes}
\cite{Coxeter63}, including the Platonic solids, is a subclass of symmetric convex bodies in the sense of Davies.
Because of space considerations, we are not able to pursue the study of the automorphism groups of convex bodies
in greater depth in this paper. We plan to explore this subject in future papers.

In the rest of the paper, starting with \S\ref{sec:invariance-properties-of-a-slab}, we exploit automorphism
groups to analytically compute the extremal ellipsoids of two classes of convex bodies. The first class
consists of convex bodies which are the intersections of a given ellipsoid with two halfspaces whose bounding
hyperplanes are parallel, and have been mentioned above. We call such convex bodies \emph{slabs}.
The second class consists of convex bodies obtained by taking the convex hull of the intersection of the same
parallel hyperplanes with the ellipsoid. We note that a convex body in this class is either a truncated second
order cone or an ellipsoidal cylinder, depending on the location of the bounding hyperplanes with respect to
the center of the ellipsoid.

In \S\ref{sec:invariance-properties-of-a-slab}, we compute the autormorphism group of a slab $K$ and use it
to determine the form of the center and matrix of its ellipsoid $\ce(K)$.  Although the automorphism group $\Aut(K)$
is not large enough to compute the ellipsoid $\ce(K)$ exactly, it is large enough to reduce its determination to
computing just three parameters (instead of $n(n+3)/2$ in the general case), one to determine its center and two
to determine its matrix.  

In \S\ref{sec:MVCE-of-a-slab}, we formulate the $\ce(K)$ problem for a slab as a
semi--infinite programming problem, and obtain its solution by computing the three parameters of $\ce(K)$ directly
from the Fritz John optimality conditions for the semi--infinite program. As we mentioned already,
K{\"o}nig and Pallaschke~\cite{KonigPallaschke81} and Todd~\cite{Todd82} solve this exact problem.
K{\"o}nig and Pallaschke's approach is similar to ours: they use the uniqueness and invariance properties of the
ellipsoid $\ce(K)$. However, their solution is not complete since they only consider the cases when the slab
does not contain the center of the given ellipsoid.  Todd gives a complete proof covering all cases.  His proof is based
on guessing the optimal ellipsoid and then proving its minimality by using some bounds on the volume of a
covering ellipsoid. In \S\ref{sec:MVCE-of-a-slab}, we also formulate the ellipsoid problem $\ce(K)$ as a nonlinear
programming problem, and give a second, independent solution for it.

For interesting applications of the ellipsoid $\ce(K)$ of a slab, see the papers
\cite{MonteiroONealNemirovski04} and \cite{BarnesMoretti05}.

In \S\ref{sec:MVIE-of-a-slab}, we formulate the $\ie(K)$ problem for a slab as a semi--infinite programming
problem, and obtain its solution directly from the resulting Fritz John optimality conditions.
We also formulate the same problem as a nonlinear programming problem, but do not provide its solution in order
to keep the the length of the paper within reasonable bounds.

Finally, in \S\ref{sec:MVCE-of-convex-combination}, we formulate the $\ce(K)$ problem for a convex body from the second 
class of convex bodies mentioned above as a semi--infinite
programming problem, and obtain its solution directly from Fritz John optimality conditions. The form of the optimal
ellipsoid $\ce(K)$ turns out to be very similar to the form of the corresponding ellipsoid for a slab. We do not
solve the inscribed ellipsoid problem for the second class of convex bodies for space considerations.

We remark that the ideas and techniques used in this paper for determining the extremal ellipsoids for specific
classes of convex bodies can be generalized to other classes of convex bodies as long as these bodies have large enough
automorphism groups. It is reasonable to expect that automorphism groups can also be used advantageously in
numerical determination of extremal ellipsoids.

Our notation is fairly standard.  We denote the set of symmetric $n\times n$ matrices by $\Sym^{n\times n}$.  
In $\R^n$, we use the bracket notation for inner products, thus $\langle{u,v}\rangle=u^Tv$. 
In the vector space $\R^{n\times n}$ of $n\times n$ matrices (and hence in $\Sym^{n\times n}$), we use the trace 
inner product  
\[ \langle{X,Y}\rangle=\tr(XY^T). \]
If both inner products are used within the same equation, then the meaning of each inner product should be clear 
from the context. We define and use additional inner products in this paper, especially in 
\S\ref{sec:automorphism-group-of-convex-bodies}. The sets $\partial X$ and $\co(X)$, and $\ext(X)$ denote the boundary 
and the convex hull of a set $X$ in $\R^n$, respectively, and $\ext(K)$ is the set of extreme points of a convex 
set $K$ in $\R^n$.


\section{The minimum volume circumscribed ellipsoid problem}
\label{sec:MVCE}

We recall that the circumscribed ellipsoid problem is the problem of finding a minimum volume
ellipsoid circumscribing a convex body $K$ in $\R^n$. This is the main problem treated in
Fritz John~\cite{John48}. In this paper, John shows that such an ellipsoid exists and is
unique; we denote it by $\ce(K)$. John introduces semi--infinite programming and develops
his optimality conditions to prove the following deep result about the ellipsoid $\ce(K)$: the
ellipsoid with the same center as $\ce(K)$ but shrunk by a factor $n$ is contained
in $K$, and if $K$ is symmetric ($K=-K$), then $\ce(K)$ needs to be shrunk by a
smaller factor $\sqrt{n}$ to be contained in $K$.  This fact is very important in the
geometric theory of Banach spaces. In that theory,  a symmetric convex body $K$ is
the unit ball of a Banach space, and if $K$ is an ellipsoid, then the Banach space
is a Hilbert space.  Consequently, the shrinkage factor indicates how close the
Banach space is to being a Hilbert space.  In this context, it is not important
to compute the exact ellipsoid $\ce(K)$. However, in some convex programming algorithms, 
including the ellipsoid method and its variants, the exact or nearly exact ellipsoid
$\ce(K)$ needs to be computed. If $K$ is sufficiently simple, $\ce(K)$ can be computed
analytically.  In more general cases, numerical algorithms have been developed to
approximately compute $\ce(K)$.

In this section, we deal with the $\ce(K)$ problem more or less following John's
approach.  However, in the interest of brevity and clarity, we use more modern
notation and give new and simpler proofs for some of the technical results.

An ellipsoid $E$ in $\R^n$ is an affine image of the unit ball
$\mathit{B_n}:=\{u\in\R^n : ||u||\leq 1\}$, that is, 
\begin{equation}
\label{eq:ellipsoid-def-1}
  E =  c+A(\mathit{B}_n) =  \{c+Au : u\in\R^n, ||u||=1\} \subset\R^m,
\end{equation}
where $A\in\R^{m\times n}$ is any $m\times n$ matrix. Here $c$ is the center of $E$
and the volume of $E$ is given by $\vol(E)=\det(A)\vol(\mathit{B}_n)$.
We are interested in the case where $E$ is a solid body ($E$ has a non--empty interior),
hence we assume that $A$ is a non--singular $n\times n$ matrix.  Let $A$ have
the singular value decomposition $A=V_1\Sigma V_2$ where $V_1$, $V_2$ are orthogonal
$n\times n$ matrices and $\Sigma$ is a diagonal matrix with positive elements.
Then we have the polar decomposition of $A$, that is, $A=SO$ where
$S=V_1AV_1^T\in\Sym^{n\times n}$ is positive definite and $O=V_1V_2$ is an orthogonal matrix.
Consequently, $E=c+SO(\mathit{B}_n)=c+S(\mathit{B}_n)$, that is, the matrix
$A$ in the definition of the ellipsoid
$E$ in \eqref{eq:ellipsoid-def-1} can be taken to be
symmetric and positive definite, an assumption we make from here on.
By making the change of variables $x:=c+Au$, that is, $u=A^{-1}(x-c)$,
and defining $X:=A^{-2}$, the ellipsoid $E$ in \eqref{eq:ellipsoid-def-1},
$E=\{x\in\R^n : ||A^{-1}(x-c)||^2\leq\,1\}$ can also be
written in the form
\begin{equation}
\label{eq:E(X,c)}
     E=E(X,c)
:=
    \{x\in\R^n : \langle X(x-c),\,x-c\rangle\,\leq\,1\}.
\end{equation}
Note that we have
\begin{equation}
\label{eq:vol(E(X,c))}
 \vol(E) = \det(X)^{-1/2}\omega_n,
\end{equation}
where $\omega_n=\vol(\mathit{B}_n)$.

Consequently, we can set up the circumscribed ellipsoid problem
as a semi--infinite program
\begin{equation}
\label{eq:MVCE-SIP}
\begin{split}
    \min\quad  &-\log\det X\qquad\qquad\qquad\qquad \\
    \st\quad   &\langle{X(y-c),y-c}\rangle\leq1,\quad \forall\,y\in K,
\end{split}
\end{equation}
in which the decision variables are $X\in\Sym^{n\times n}$ and $c\in\R^n$.

There exists an ellipsoid of minimum volume circumscribing the convex body $K$. It suffices to prove that 
the set of feasible $(X,c)$ in problem \eqref{eq:MVCE-SIP} is compact. Let $K$ contain a ball of radius $r>0$, 
and let $E=E(X,c)$ be an ellipsoid covering $K$. Note that the ball still lies in $E$ if we shift 
its center to center $c$ of the ellipsoid $E$.  Thus, every vector $u$ in $\R^n$, $||u||=1$ must satisfy the 
inequality $\langle{Xu,u}\rangle\leq1/r^2$, that is, the eigenvalues of $X$ are at most $1/r^2$. It follows from 
the spectral decomposition of symmetric matrices that the set of feasible $X$ is compact. If the norm of the 
center $c$ of the ellipsoid $E$ circumscribing $K$ goes to infinity, then the volume of the ellipsoid goes to 
infinity as well.  This proves that the set of feasible $(X,c)$ is compact.

The following basic theorem of Fritz John~\cite{John48} is one of our main tools
in this paper.  The book \cite{Guler08} develops optimality conditions for
semi--infinite programming including this result and treats several problems
from analysis and geometry using semi--infinite programming techniques.

\begin{theorem}
{\bf (\emph{Fritz John})}
\label{Theorem:FJ-Theorem-For-SIP}
Consider the optimization problem
\begin{equation}
\label{eq:SIP}
\begin{split}
   \min\quad  &f(x)  \\
   \st\quad   &g(x,y)\leq0,\quad \forall \,y\in Y,
\end{split}
\end{equation}
where $f(x)$ is a continuously differentiable function defined on an open
set $X\subseteq\R^n$, and $g(x,y)$ and $\nabla_xg(x,y)$ are continuous
functions defined on $X\times Y$ where $Y$ is a compact set in some
topological space. If $x$ is a local minimizer of \eqref{eq:SIP},
then there exist at most $n$ active constraints $\{g(x,y_i)\}_1^k$
($g(x,y_i)=0$) and a non--trivial, non--negative multiplier vector
$0\neq(\lambda_0,\lambda_1,\ldots,\lambda_k)\geq0$ such that
\[ \lambda_0\nabla f(x)+\sum_{i=1}^k\lambda_i\nabla_x g(x,y_i)
= 0. \]
\end{theorem}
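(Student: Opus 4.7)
The plan is to reduce the semi-infinite problem to a finite-dimensional separation argument on the active set. First, I define $A := \{y \in Y : g(x, y) = 0\}$, the set of active indices at the local minimizer $x$; continuity of $g$ and compactness of $Y$ make $A$ compact (possibly empty). If $A = \emptyset$, then $g(x, \cdot) < 0$ on $Y$ and, by joint continuity, the same strict inequality persists in a neighborhood of $x$, so $x$ is an unconstrained local minimum of $f$; the conclusion then holds trivially with $\nabla f(x) = 0$, $k = 0$, and $\lambda_0 = 1$. So I assume $A \neq \emptyset$ and consider the compact set
\[
    V := \{\nabla f(x)\} \cup \{\nabla_x g(x, y) : y \in A\} \subset \R^n.
\]

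The heart of the argument is the claim that $0 \in \co(V)$. Once this is in hand, writing out a nontrivial convex combination yields nonnegative multipliers $\mu_0, \mu_1, \ldots, \mu_N$ and indices $y_1, \ldots, y_N \in A$ with $0 = \mu_0 \nabla f(x) + \sum \mu_i \nabla_x g(x, y_i)$, and an application of Carath\'eodory's theorem in $\R^n$ reduces the support to the dimensional bound stated in the theorem. To establish the claim I argue by contradiction: if $0 \notin \co(V)$, then since $V$ is compact so is $\co(V)$, and strict separation supplies $d \in \R^n$ with $\langle v, d \rangle < 0$ for every $v \in V$. The contradiction will come from showing that $d$ is a \emph{feasible descent direction} at $x$.

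Descent is immediate: $\langle \nabla f(x), d \rangle < 0$ gives $f(x + td) < f(x)$ for small $t > 0$. Feasibility is the main obstacle, because I need $g(x + td, y) \leq 0$ for \emph{every} $y \in Y$, not merely for $y \in A$. The plan is to split $Y$ into a small open neighborhood $U$ of $A$ in $Y$ and its complement. On $Y \setminus U$, no point is active, so $g(x, \cdot)$ attains a strictly negative maximum $-\delta < 0$ there by compactness, and joint continuity of $g$ keeps $g(x + td, y) \leq -\delta/2$ for small $t$. On $U$, joint continuity of $\nabla_x g$ together with $\langle \nabla_x g(x, y'), d \rangle < 0$ for $y' \in A$ lets me shrink $U$ (and then $t$) so that $\langle \nabla_x g(x + sd, y), d \rangle \leq -\eta$ uniformly for $y \in U$ and $s \in [0, t]$; the mean value theorem in $s$ then yields $g(x + td, y) \leq g(x, y) - \eta t \leq -\eta t < 0$. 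Combining the two regimes, $x + td$ is feasible and strictly lowers $f$, contradicting local optimality of $x$.

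The hard part, as anticipated, is the uniform control of the linearization $\langle \nabla_x g(x, y), d \rangle$ over a single neighborhood $U$ of $A$: one must upgrade the pointwise negativity on $A$ to a uniform bound on an open set in $Y$ and then propagate it along the segment from $x$ to $x + td$. This is precisely where the hypothesis that $\nabla_x g$ is jointly continuous on $X \times Y$, together with the compactness of $A$, is essential. Everything else---the strict separation, the Carath\'eodory reduction, and the decomposition of $Y$---is standard once this uniformity is secured.
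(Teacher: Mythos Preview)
The paper does not prove this theorem; it is quoted as a classical result, with references to John's 1948 paper \cite{John48} and to the book \cite{Guler08} for a full development. So there is no in--paper argument to compare against.

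Your outline is the standard proof and is sound. The separation argument yielding $d$ with $\langle v,d\rangle<0$ for all $v\in V$, the split of $Y$ into a tube $U$ around the compact active set $A$ and its complement, and the use of joint continuity of $\nabla_x g$ together with compactness of $A$ to push the strict negativity of $\langle\nabla_x g(\cdot,\cdot),d\rangle$ from $\{x\}\times A$ to a full product neighborhood (whence the mean--value estimate on $g(x+td,y)$) are exactly the textbook ingredients, and you have correctly identified the uniform linearization bound near $A$ as the one genuinely delicate step.

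One point to sharpen: Carath\'eodory in $\R^n$ expresses $0\in\co(V)$ using at most $n+1$ points of $V$, which gives $k\le n$ only when $\nabla f(x)$ happens to be one of them. When it is not, you obtain $\lambda_0=0$ with $k\le n+1$, and squeezing this down to $k\le n$ is not automatic from plain Carath\'eodory (for instance, $n+1$ vectors in general position with $0$ in the interior of their simplex admit no shorter positive dependence). So the phrase ``reduces the support to the dimensional bound stated in the theorem'' hides a genuine wrinkle; when you write this out you should either supply the extra argument or record that the natural Carath\'eodory count is $k\le n$ when $\lambda_0>0$ and $k\le n+1$ when $\lambda_0=0$.
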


We now derive the optimality conditions for \eqref{eq:MVCE-SIP}.

\begin{theorem}
\label{Theorem:FJ-conditions-for-MVCE-SIP}
Let $K$ be a convex body in $\R^n$. There exists an ellipsoid of minimum volume
circumscribing $K$. If $E(X,c)$ is such an ellipsoid, then
there exists a multiplier vector $\lambda=(\lambda_1,\ldots,\lambda_k)>0$,
$0\leq k\leq n(n+3)/2$, and points $\{u_i\}_1^k$ in $K$ such that
\begin{equation}
\label{eq:FJ-for-MVCE-general}
\begin{split}
       X^{-1}
&=     \sum_{i=1}^k\lambda_i(u_i-c)(u_i-c)^T,\\
       0
&=     \sum_{i=1}^k\lambda_i(u_i-c),\\
       u_i
&\in   \partial K\cap\partial E(X,c),
        \quad i=1,\ldots,k,\\
       K
&\subseteq
       E(X,c).
\end{split}
\end{equation}
\end{theorem}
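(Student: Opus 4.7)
The plan is to apply the Fritz John SIP theorem (Theorem~\ref{Theorem:FJ-Theorem-For-SIP}) directly to the formulation \eqref{eq:MVCE-SIP}, whose decision variable $(X,c)$ ranges over the open set of positive definite symmetric matrices paired with $\R^n$, a space of dimension exactly $n(n+1)/2+n=n(n+3)/2$. This dimension count already supplies the upper bound on $k$. First I would record the gradients of the objective $-\log\det X$, which are $-X^{-1}$ in the $X$-block (under the trace inner product) and $0$ in the $c$-block, and of the constraint $g((X,c),y)=(y-c)^T X(y-c)-1$, which are $(y-c)(y-c)^T$ in the $X$-block and $-2X(y-c)$ in the $c$-block. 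Continuity of $f$, $g$, and $\nabla_x g$ together with the compactness of $K$ verify the hypotheses of the theorem, producing at most $n(n+3)/2$ active points $u_1,\ldots,u_k\in K$ and non-negative multipliers $(\lambda_0,\ldots,\lambda_k)\neq 0$ satisfying the two resulting stationarity equations.

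The hard part, and really the only substantive step, will be ruling out $\lambda_0=0$. If $\lambda_0$ vanished, the $X$-block stationarity equation would reduce to
\[ \sum_{i=1}^{k}\lambda_i(u_i-c)(u_i-c)^T=0, \]
a sum of positive semidefinite rank-one matrices. Each $u_i$ lies on $\partial E(X,c)$ by activity of its constraint, so $u_i\neq c$ and the corresponding summand is nonzero positive semidefinite; a vanishing sum of positive semidefinite matrices forces each summand to vanish, whence all $\lambda_i=0$, contradicting non-triviality of the multipliers. Scaling by $1/\lambda_0$ then delivers the first equation in \eqref{eq:FJ-for-MVCE-general}, and the second follows from the $c$-block equation once I divide by $-2$ and use that $X$ is invertible. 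Any surviving zero multipliers can be discarded together with their points, yielding a strictly positive $\lambda>0$ while preserving the bound on $k$.

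Two loose ends remain. The containment $K\subseteq E(X,c)$ is nothing more than feasibility of $(X,c)$ in \eqref{eq:MVCE-SIP}. To see that each contact point actually lies on $\partial K$, I would argue by contradiction: if some $u_i$ were interior to $K$, then moving slightly from $u_i$ in the outward normal direction of $\partial E(X,c)$ would stay inside $K$ while leaving $E(X,c)$, contradicting $K\subseteq E(X,c)$. Since activity already gives $u_i\in\partial E(X,c)$, this places $u_i\in\partial K\cap\partial E(X,c)$ and completes the derivation.
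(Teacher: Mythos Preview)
Your proposal is correct and follows essentially the same route as the paper: apply Theorem~\ref{Theorem:FJ-Theorem-For-SIP} to \eqref{eq:MVCE-SIP}, compute the $X$- and $c$-gradients of the Lagrangian, and rule out $\lambda_0=0$ by observing that the resulting equation $\sum_i\lambda_i(u_i-c)(u_i-c)^T=0$ forces all $\lambda_i=0$. The paper uses the trace of this identity to reach the same contradiction; your positive-semidefiniteness argument is equivalent. You are slightly more explicit than the paper in two places---you spell out why $u_i\neq c$ (activity puts $u_i$ on $\partial E(X,c)$) and why $u_i\in\partial K$ (an interior point of $K$ on $\partial E$ would violate $K\subseteq E$)---but these are routine observations the paper leaves implicit.
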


We call the points $\{u_i\}_1^k$ in $\partial K\cap\partial E$
\emph{contact points} of $K$ and $E$.

\begin{proof}
The existence of a minimum volume circumscribed ellipsoid is already proved above.  Let $E(X,c)$ be
such an ellipsoid. The constraints in \eqref{eq:MVCE-SIP} are indexed by $y\in K$, a
compact set, and  Theorem~\ref{Theorem:FJ-Theorem-For-SIP} implies that there
exists a non--zero multiplier vector
$(\lambda_0,\lambda_1,\ldots,\lambda_k)\geq0$, where $k\leq n(n+1)/2+n=n(n+3)/2$,
$\lambda_i>0$ for $i>0$, and points $\{u_i\}_1^k$ in $K$ such that the Lagrangian function
\begin{equation*}
\begin{split}
      L(X,c,\lambda)
&:=   -\lambda_0\log\det X
        +\sum_{i=1}^k\lambda_i\langle{X(u_i-c),u_i-c}\rangle\\
&=    -\lambda_0\log\det X
        +\langle{X,\sum_{i=1}^k\lambda_i(u_i-c)(u_i-c)^T}\rangle,
\end{split}
\end{equation*}
where the inner product on the last line is the trace inner product on $S^n$,
satisfies the optimality conditions
\begin{equation*}
\begin{split}
      0
&=    \nabla_cL(X,c,\lambda)
=     X\sum_{i=1}^k\lambda_i(u_i-c),\\
      0
&=    \nabla_XL(X,c,\lambda)
=     -\lambda_0X^{-1}
       +\sum_{i=1}^k\lambda_i(u_i-c)(u_i-c)^T,
\end{split}
\end{equation*}
where we used the well--known fact that $\nabla_X\log\det X=X^{-1}$. If $\lambda_0>0$, then
$0=\tr(\sum_{i=1}^k\lambda_i(u_i-c)(u_i-c)^T)=\sum_{i=1}^k\lambda_i||u_i-c||^2$.
This implies that $\lambda_i=0$ for all $i$, contradicting $\lambda\neq0$.
We let $\lambda_0=1$ without loss of generality,  and arrive at the Fritz John
conditions \eqref{eq:FJ-for-MVCE-general}.
\end{proof}

\begin{remark}
\label{Remark:contact-points-of-MVCE-1}
The contact points have applications in several fields, in optimal designs, and
in estimating the size of almost orthogonal submatrices of orthogonal matrices
\cite{Rudelson97}, for example. Gruber~\cite{Gruber88} shows that ``most'' convex
bodies $K$ have the maximum number $n(n+3)/2$ of contact points. See also
\cite{Rudelson97} for a simpler proof. Similar results also hold for the maximum
volume inscribed ellipsoids discussed in \S\ref{sec:MVIE},
see \cite{Gruber88}.  However,  Rudelson~\cite{Rudelson97} shows that
for every $\varepsilon>0$ and every convex body $K$, there exists a nearby convex body
$L$ whose distance (Banach--Mazur distance) to $K$ is less than $1+\varepsilon$ and
which  at most $k\leq C(\varepsilon)\cdot n\log^3n$ contact points.  This has obvious
implications for numerical algorithms that try to compute approximate covering ellipsoids.
\end{remark}

\begin{remark}
\label{Remark:contact-points-of-MVCE-2}
Let $S=\ext(K)$, the set of extreme points of $K$.  We have $K=\co(S)$, the convex hull of $S$, by a Theorem of
Minkowski, see Rockafellar~\cite{Rockafellar97}, Corollary 18.5.1. Note that $S$ and $K$
have the same extremal covering ellipsoid, and applying Theorem~\ref{Theorem:FJ-conditions-for-MVCE-SIP}
to $S$ instead of $K$ shows that we can choose $u_i\in S=\ext(K)$, $i=1,\ldots,k$.

An independent proof of the above fact runs as follows: let $x\in\partial K\cap\partial E$
be a contact point. Noting $\partial E=\ext(E)$, we have $x\in\ext(E)$.
If $x\notin\ext(K)$, then there exist $y,z\in K$, $y\neq z$, such that $x$ lies in
the interior of the line segment $[y,z]$. However, $x\in[y,z]\subseteq E$,
contradicting the fact that $x\in\ext(E)$.
\end{remark}

The equation $\sum_{i=1}^k\lambda_i(u_i-c)=0$ in \eqref{eq:FJ-for-MVCE-general}
gives $c\in\co(\{u_i\}_1^k)$. This immediately implies

\begin{corollary}
\label{Corollary:contact-points-of-MVCE-cannot-lie-in-any-halfspace}
Let $K$ be a convex body in $\R^n$. The contact points of $\ce(K)$
are not contained in any closed halfspace whose bounding hyperplane
passes through the center of $\ce(K)$.
\end{corollary}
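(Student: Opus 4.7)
The plan is to argue by contradiction, extracting both pieces of information contained in the Fritz John conditions \eqref{eq:FJ-for-MVCE-general}: the centroid relation $\sum_{i=1}^k \lambda_i (u_i - c) = 0$ with all $\lambda_i > 0$, and the rank-one decomposition $X^{-1} = \sum_{i=1}^k \lambda_i (u_i - c)(u_i - c)^T$. The corollary is really a statement about two separate obstructions meeting: the center $c$ cannot sit strictly on one side of the contact points, nor can the contact points be too degenerate.

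Concretely, I would suppose toward a contradiction that there exists $a \neq 0$ such that every contact point lies in the closed halfspace $\{x \in \R^n : \langle a, x - c \rangle \geq 0\}$. Pairing the centroid relation with $a$ yields
\[
  0 = \Bigl\langle a,\,\sum_{i=1}^k \lambda_i (u_i - c)\Bigr\rangle = \sum_{i=1}^k \lambda_i \langle a,\, u_i - c\rangle.
\]
Since each $\lambda_i > 0$ and each summand $\langle a, u_i - c\rangle$ is nonnegative by hypothesis, every term must vanish. Hence $u_i - c \in a^\perp$ for all $i$, so the contact points not only fail to lie strictly in an open halfspace — they are forced entirely onto the bounding hyperplane.

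The final step is to cash this degeneracy in against the second Fritz John equation. Since every $u_i - c$ lies in the $(n-1)$-dimensional subspace $a^\perp$, each rank-one matrix $(u_i - c)(u_i - c)^T$ has both its row and column spaces inside $a^\perp$, and therefore so does their nonnegative combination $X^{-1}$. This bounds $\operatorname{rank}(X^{-1}) \leq n - 1$, contradicting the positive definiteness of $X$ that is built into the definition \eqref{eq:E(X,c)} of $E(X,c)$ as a solid ellipsoid. There is no real obstacle here; the only point worth flagging is that neither the centroid equation nor the rank condition suffices on its own — one needs them in tandem, which is exactly what the word ``immediately'' in the preceding sentence of the paper is pointing at.
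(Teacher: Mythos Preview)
Your proof is correct and matches the paper's approach, which is the single sentence preceding the corollary: $\sum_i \lambda_i(u_i-c)=0$ with $\lambda_i>0$ places $c\in\co(\{u_i\}_1^k)$, and this ``immediately implies'' the result. You go further by explicitly invoking the rank equation $X^{-1}=\sum_i\lambda_i(u_i-c)(u_i-c)^T$ to rule out the degenerate case where all contact points lie on the bounding hyperplane --- a step the paper's ``immediately'' leaves implicit but which, as you correctly observe, is genuinely needed.
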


For most theoretical purposes, we may assume that the optimal ellipsoid is the
unit ball $E(I_n,0)$. This can be accomplished by an affine change of the coordinates,
if necessary.  This results in the more transparent optimality conditions
\begin{equation}
\label{eq:FJ-for-MVCE}
\begin{split}
       I_n
&=     \sum_{i=1}^k\lambda_iu_iu_i^T,\quad
       \sum_{i=1}^k\lambda_iu_i
=      0,\\
       u_i
&\in   \partial K\cap\partial \mathit{B}_n,\quad i=1,\ldots,k,\quad
       K
\subseteq
       \mathit{B}_n.
\end{split}
\end{equation}
Taking traces of both sides in the first equation above gives
$  n
=  \tr(I_n)=\tr(\sum_{i=1}^k\lambda_iu_iu_i^T)
=  \sum_{i=1}^k\lambda_iu_i^Tu_i=\sum_{i=1}^k\lambda_i$,
that is,
\begin{equation}
\label{eq:sum-of-lambdas-is-n}
\sum_{i=1}^k\lambda_i=n.
\end{equation}

In this section and in \S\ref{sec:MVIE},
convex duality will play an important role. If $C$ is a convex body in $\R^n$,
then the Minkowski \emph{support function} of $C$ is defined by
\[    s_C(d) :=  \max_{u\in C}\langle{d,u}\rangle. \]
It is obviously defined on $\R^n$ and is a convex function since it is a
maximum of linear functions indexed by $u$. In fact, $s_C=\delta_C^*$, where
$\delta_C$ is the indicator function of $C$ and $*$ denotes the Fenchel dual.
If $C$ and $D$ are two convex bodies, it follows from Corollary~13.1.1 in
\cite{Rockafellar97} that $C\subseteq D$ if and only if $s_C\leq s_D$.

We compute
\begin{equation}
\label{eq:s-of-E(X,c)}
\begin{split}
      s_{E(X,c)}(d)
&=    \max\left\{\langle{d,u}\rangle : \langle{X(u-c),u-c}\rangle\leq1\right\}\\
&=    \max\left\{\langle{d,c+X^{-1/2}v}\rangle : ||v||\leq1\rangle\right\}\\
&=    \langle{c,d}\rangle+\max_{||v||=1}\langle{X^{-1/2}d,v}\rangle
=     \langle{c,d}\rangle+||X^{-1/2}d||\\
&=    \langle{c,d}\rangle+\langle{X^{-1}d,d}\rangle^{1/2},
\end{split}
\end{equation}
where we have defined $v=X^{1/2}(u-c)$ or $u=c+X^{-1/2}v$.

The \emph{polar} of the set $C$ is defined by
\[    C^\circ
:=    \{d: s_C(d)\leq1\}
=     \{x:\langle{x,u}\rangle\leq1, \forall \,u\in C\}.
\]
An easy calculation shows that
\[ \left(\co(\{u_i\}_1^k)\right)^\circ
=
   \{x:\langle{x,u_i}\rangle\leq1,\ i=1,\ldots,k\}.
\]

The following is a \emph{key} result. Among other things, it implies that the optimality
conditions \eqref{eq:FJ-for-MVCE} are powerful enough to prove the uniqueness of the
minimum volume circumscribed ellipsoid as well as the uniqueness of the maximal volume
inscribed ellipsoid treated in \S\ref{sec:MVIE}.

\begin{lemma}
\label{Lemma:unit-ball-is-both-MVCE-and-IE}
Let $\{u_i\}_1^k$ ($k$ arbitrary) be a set of unit vectors in $\R^n$ satisfying the conditions
$\sum_{i=1}^k\lambda_iu_iu_i^T=I_n$ and $\sum_{i=1}^k\lambda_iu_i=0$.
Define the polytope $P=\co\left(\{u_i\}_1^k)\right)$ and its polar
$P^\circ=\{x:\langle{u_i,x}\rangle\leq1,\, i=1,\ldots,k\}$. The unit ball is both
the unique minimum volume circumscribed ellipsoid of $P$ and the unique maximum
volume inscribed ellipsoid of $P^\circ$.
\end{lemma}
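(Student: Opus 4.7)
The plan is to verify both extremal properties of the unit ball directly from the hypotheses, using also $\sum\lambda_i=n$ obtained by tracing the first identity (cf.\ \eqref{eq:sum-of-lambdas-is-n}). The containments $P\subseteq\mathit{B}_n$ (each $u_i$ is a unit vector) and $\mathit{B}_n\subseteq P^\circ$ (Cauchy--Schwarz) settle the two membership statements immediately.

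For the circumscribed half, let $E(X,c)\supseteq P$ be any competitor. Multiplying the feasibility inequality $\langle X(u_i-c),u_i-c\rangle\leq 1$ by $\lambda_i$ and summing, the cross term $-2\langle Xc,\sum\lambda_iu_i\rangle$ vanishes, the quadratic piece collapses to $\langle X,I_n\rangle=\tr X$, and the constant piece yields $n\langle Xc,c\rangle$, producing
\[
\tr X + n\langle Xc,c\rangle\leq n.
\]
Since $X$ is positive definite and $\langle Xc,c\rangle\geq 0$ with equality only at $c=0$, this forces $\tr X\leq n$; AM--GM on the eigenvalues of $X$ then gives $\det X\leq(\tr X/n)^n\leq 1$, with equality throughout only when $X=I_n$ and $c=0$. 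By \eqref{eq:vol(E(X,c))} this identifies $\mathit{B}_n$ as the unique minimum volume circumscribed ellipsoid of $P$.

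For the inscribed half the analogous direct manoeuvre falls short. The feasibility bound $s_{E(X,c)}(u_i)\leq s_{P^\circ}(u_i)=1$ becomes, via \eqref{eq:s-of-E(X,c)}, the square-root inequality $\sqrt{\langle X^{-1}u_i,u_i\rangle}\leq 1-\langle c,u_i\rangle$, and squaring-and-weighting yields only $\tr X^{-1}\leq n+\|c\|^2$, which is too weak to force $\det X^{-1}\leq 1$ once $c\neq 0$. I would instead reparameterise inscribed ellipsoids as $E=c+B(\mathit{B}_n)$ with $B\in\Sym^{n\times n}$ positive definite. Then $\vol(E)=\omega_n\det B$, and $E\subseteq P^\circ$ becomes $\|Bu_i\|+\langle u_i,c\rangle\leq 1$, which is \emph{jointly} convex in $(B,c)$. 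Since $-\log\det B$ is strictly convex on the positive definite cone, the KKT conditions are sufficient for optimality; evaluating stationarity at $(B,c)=(I_n,0)$---where every constraint is active with multipliers $\mu_i=\lambda_i$---reduces precisely to $\sum\lambda_iu_iu_i^T=I_n$ and $\sum\lambda_iu_i=0$. Uniqueness of $B=I_n$ follows from strict convexity, and uniqueness of $c=0$ given $B=I_n$ is then forced by the feasibility constraints $\langle u_i,c\rangle\leq 0$ together with $\sum\lambda_iu_i=0$ and $\sum\lambda_iu_iu_i^T=I_n$ (a sign/coercivity argument of the same flavour as the first half). Hence $\mathit{B}_n$ is the unique maximum volume inscribed ellipsoid of $P^\circ$.

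The main obstacle is precisely this asymmetry between the two halves: the circumscribed feasibility is quadratic in $(X,c)$ and yields to a single weighted sum, whereas the inscribed feasibility is a second-order-cone constraint that couples $B$ and $c$ nonlinearly. Switching to the $(B,c)$ parameters and invoking convex KKT sufficiency sidesteps this cleanly; a purely elementary direct attack on the inscribed case would seem to require the generalised Hadamard / geometric Brascamp--Lieb inequality, which is unnecessary here.
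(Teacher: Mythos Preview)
Your circumscribed half is essentially identical to the paper's argument. Your inscribed half is correct but takes a different route, and your diagnosis of the ``obstacle'' is mistaken: the paper gives a fully elementary direct proof.

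The trick you missed is to work with $X^{-1/2}$ rather than $X^{-1}$. From $E(X,c)\subseteq P^\circ$ one has $\langle c,u_i\rangle+\|X^{-1/2}u_i\|\leq 1$. Instead of squaring, apply Cauchy--Schwarz in the form $\langle X^{-1/2}u_i,u_i\rangle\leq\|X^{-1/2}u_i\|\cdot\|u_i\|=\|X^{-1/2}u_i\|$. Weighting by $\lambda_i$ and summing then gives
\[
n=\sum_i\lambda_i\geq\sum_i\lambda_i\bigl(\langle c,u_i\rangle+\langle X^{-1/2}u_i,u_i\rangle\bigr)
=\langle X^{-1/2},I_n\rangle=\tr(X^{-1/2})\geq n\,\det(X)^{-1/2n},
\]
so $\det X\geq 1$, and the equality analysis (AM--GM forces $X=I_n$; then $\langle c,u_i\rangle=0$ for all $i$ combined with $\sum\lambda_iu_iu_i^T=I_n$ forces $c=0$) is immediate. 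No Brascamp--Lieb is needed.

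Your KKT route is valid: the constraints $\|Bu_i\|+\langle u_i,c\rangle\leq 1$ are jointly convex, the objective $-\log\det B$ is strictly convex in $B$, and the stationarity conditions at $(I_n,0)$ with multipliers $\mu_i=\lambda_i$ reduce exactly to the two hypotheses. Your uniqueness argument for $c$ (once $B=I_n$) is also the same as the paper's final step. What your approach buys is a uniform mechanism (convex sufficiency) for both halves; what the paper's approach buys is a self-contained inequality proof with no appeal to optimality theory, which is what the lemma is later used to \emph{establish}. So the paper's direct argument is preferable here, and you should replace your assertion that the elementary attack ``falls short'' with the $X^{-1/2}$ trick above.
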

\begin{proof}
Let $E(X,c)$ be any ellipsoid covering the points $\{u_i\}_1^k$. We have
$\langle{X(u_i-c),u_i-c}\rangle\leq1$ and
\begin{equation*}
\begin{split}
        n
&=      \sum_{i=1}^k\lambda_i
\geq   \sum_{i=1}^k\lambda_i\langle{X(u_i-c),u_i-c}\rangle
=       \langle{X, \sum_{i=1}^k\lambda_i(u_i-c)(u_i-c)^T}\rangle\\
&=     \langle{X, \sum_{i=1}^k\lambda_iu_iu_i^T}\rangle
         -\langle{X, \sum_{i=1}^kc\lambda_iu_i^T}\rangle
          -\langle{X, \sum_{i=1}^k\lambda_iu_ic^T}\rangle
           +\langle{X, (\sum_{i=1}^k\lambda_i)cc^T}\rangle\\
&=      \langle{X,I_n}\rangle+n\langle{X,cc^T}\rangle
=       \tr(X)+n\langle{Xc,c}\rangle\\
&\geq    n\left(\det(X)^{1/n}+\langle{Xc,c}\rangle\right).
\end{split}
\end{equation*}
Here the fourth equality follows from \eqref{eq:FJ-for-MVCE}, and the last
inequality follows from the fact that $\det(X)^{1/n}\leq\tr(X)/n$ which
is precisely the arithmetic--geometric mean inequality applied to the
eigenvalues of $X$.  Thus
\[    \det(X)^{1/n}+\langle{Xc,c}\rangle \leq  1, \]
and the equality  $\det(X)=1$ holds if and only if $c=0$,
$\langle{X(u_i-c),u_i-c}\rangle=1$ for all $i=1,\ldots,k$,
and the arithmetic--geometric mean inequality holds as an equality.
The last condition holds if and only if $X$ is a positive multiple of the
identity matrix (and then $\det(X)=1$ implies $X=I_n$).  Thus, the minimum
volume ellipsoid covering the points $\{u_i\}_1^k$ must be the unit ball.

Next, let $E(X,c)$ be any ellipsoid inscribed in $P^\circ$. It follows from \eqref{eq:vol(E(X,c))}
that $\vol(E(X,c))=\det(X^{-1})\omega_n$.
By virtue of \eqref{eq:s-of-E(X,c)}, the inclusion $E(X,c)\subseteq P^\circ$ implies
\[      s_{E(X,c)}(u_i)
=       \langle{c,u_i}\rangle+||X^{-1/2}u_i||
\leq    s_{P^\circ}(u_i)
=       \max_j\langle{u_i,u_j}\rangle
\leq    1,\quad i=1,\ldots,k.
\]
The Cauchy--Schwarz inequality gives
$\langle{X^{-1/2}u_i,u_i}\rangle\leq||X^{-1/2}u_i||\cdot||u_i||=||X^{-1/2}u_i||$.
Therefore,
\begin{equation*}
\begin{split}
       n
&=     \sum_{i=1}^k\lambda_i
\geq   \sum_{i=1}^k\lambda_i
        \left(\langle{c,u_i}\rangle+\langle{X^{-1/2}u_i,u_i}\rangle\right)\\
&=     \langle{X^{-1/2}, \sum_{i=1}^k\lambda_iu_iu_i^T}\rangle
=      \tr(X^{-1/2})
\geq   n\det(X)^{-1/2n},
\end{split}
\end{equation*}
where the last inequality follows from the arithmetic--geometric mean inequality
applied to the eigenvalues of $X^{-1/2}$.  Thus $\det(X)\geq1$, and the equality $\det(X)=1$
holds if and only if (i) $X$ is a positive multiple of the identity matrix
(and then $\det(X)=1$ implies $X=I_n$), and (ii)
$1=\langle{c,u_i}\rangle+\langle{X^{-1/2}u_i,u_i}\rangle=\langle{c,u_i}\rangle+1$,
that is, $\langle{c,u_i}\rangle=0$ for all $i=1,\ldots,k$. Then the equation
$\sum_{i=1}^n\lambda_iu_iu_i^T=I_n$ implies that
$||c||^2=\sum_{i=1}^k\lambda_i\langle{c,u_i}\rangle^2=0$. Thus, condition (ii)
holds if and only if $c=0$.  The lemma is proved.
\end{proof}

\begin{theorem}
\label{Theorem:MVCE-uniqueness-and-sufficiency}
Let $K$ be a convex body in $\R^n$. The minimum volume circumscribed ellipsoid of
$K$ is unique.  Moreover, the optimality conditions \eqref{eq:FJ-for-MVCE-general}
are \emph{necessary and sufficient} conditions for an ellipsoid $E(X,c)$ to be the
minimum volume circumscribed ellipsoid of $K$.
\end{theorem}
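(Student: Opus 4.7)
The plan is to reduce everything to Lemma~\ref{Lemma:unit-ball-is-both-MVCE-and-IE}. Existence of a minimum volume circumscribed ellipsoid was already established before Theorem~\ref{Theorem:FJ-conditions-for-MVCE-SIP}, and necessity of the Fritz John conditions \eqref{eq:FJ-for-MVCE-general} is the content of Theorem~\ref{Theorem:FJ-conditions-for-MVCE-SIP} itself. So it suffices to prove the \emph{sufficiency} of \eqref{eq:FJ-for-MVCE-general} together with \emph{uniqueness}; I would deduce both at once.

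First, I would pass to a convenient coordinate system. The minimum volume circumscribing property, the volume ratio of two ellipsoids, and the form of the Fritz John conditions \eqref{eq:FJ-for-MVCE-general} are all preserved under the invertible affine change of variables $x \mapsto X^{1/2}(x-c)$. Under this change of variables the ellipsoid $E(X,c)$ becomes the unit ball $\mathit{B}_n$, the body $K$ is mapped to an affine image $\tilde{K}$, each contact point $u_i$ goes to a unit vector $\tilde{u}_i \in \partial \tilde{K} \cap \partial \mathit{B}_n$, and conditions \eqref{eq:FJ-for-MVCE-general} simplify to the form \eqref{eq:FJ-for-MVCE}: $\sum_{i=1}^k \lambda_i \tilde{u}_i \tilde{u}_i^T = I_n$, $\sum_{i=1}^k \lambda_i \tilde{u}_i = 0$, with $\tilde{K}\subseteq\mathit{B}_n$. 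Thus I may assume from the outset that $E(X,c) = \mathit{B}_n$ and that \eqref{eq:FJ-for-MVCE} holds.

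Now set $P := \co(\{u_i\}_1^k)$. Since each $u_i \in K$ and $K \subseteq \mathit{B}_n$, I obtain the chain of inclusions $P \subseteq K \subseteq \mathit{B}_n$. By Lemma~\ref{Lemma:unit-ball-is-both-MVCE-and-IE}, the unit ball is the \emph{unique} minimum volume ellipsoid circumscribing $P$; hence every ellipsoid that covers $P$ has volume at least $\vol(\mathit{B}_n)$, with equality only for $\mathit{B}_n$ itself. Since any ellipsoid covering $K$ automatically covers $P$, this shows that $\mathit{B}_n$ achieves the minimum volume among all ellipsoids covering $K$, which gives sufficiency. If $E'$ were any other MVCE of $K$, then $E'$ would also cover $P$ with $\vol(E')=\vol(\mathit{B}_n)$, and the uniqueness part of Lemma~\ref{Lemma:unit-ball-is-both-MVCE-and-IE} forces $E' = \mathit{B}_n$. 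Pulling back through the affine change of variables, an arbitrary MVCE $E(X,c)$ is unique in the original coordinates as well, because Theorem~\ref{Theorem:FJ-conditions-for-MVCE-SIP} guarantees that every MVCE satisfies \eqref{eq:FJ-for-MVCE-general}, putting us back in the situation just analyzed.

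There is no real obstacle in this argument: Lemma~\ref{Lemma:unit-ball-is-both-MVCE-and-IE} has already done the heavy lifting via the arithmetic--geometric mean inequality, and the only bookkeeping required is checking that the Fritz John data $(X,c,\{u_i\},\{\lambda_i\})$ transforms correctly under the affine normalization to $\mathit{B}_n$, which is immediate from the definitions.
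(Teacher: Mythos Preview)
Your proposal is correct and follows essentially the same approach as the paper: normalize so that the candidate ellipsoid is $\mathit{B}_n$, set $P=\co(\{u_i\}_1^k)\subseteq K\subseteq\mathit{B}_n$, and invoke Lemma~\ref{Lemma:unit-ball-is-both-MVCE-and-IE} to conclude that any ellipsoid covering $K$ (hence $P$) has volume at least $\vol(\mathit{B}_n)$, with equality only for $\mathit{B}_n$. The paper's version is terser but the logical structure is identical.
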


It is easy to see that $K$ can be replaced by $S=\ext(K)$ in this theorem.

\begin{proof}
The neccessity of the conditions \eqref{eq:FJ-for-MVCE-general} is already proved in
Theorem~\ref{Theorem:FJ-conditions-for-MVCE-SIP}. We assume, without any loss of generality,
that $E(I_n,0)=\mathit{B}_n$ satisfies the optimality conditions \eqref{eq:FJ-for-MVCE} for some 
set of multipliers $\{\lambda_i\}$. Let $E=\ce(K)$. We claim that $E=\mathit{B}_n$.  This will 
immediately imply the remaining parts of the theorem. Since $E\subseteq P$, we have 
$\vol(\mathit{B}_n)\geq\vol(E)\geq\vol(\mathit{B}_n)$ where the first inequality follows because 
$E$ has minimum volume among ellipsoids circumscribing $K$ and the second inequality follows from 
Lemma~\ref{Lemma:unit-ball-is-both-MVCE-and-IE}. The same lemma proves the claim that $E=\mathit{B}_n$.
\end{proof}

The \emph{breadth} of a convex body is the smallest distance between its two parallel
support planes, and \emph{diameter} is the distance between its two farthest points.
The following result of John~\cite{John48} shows that a convex body can be ``rounded'' by
an affine transformation.  Its proof also gives valuable information about the
locations of the contact points.  Its easy proof is due to Juhnke~\cite{Juhnke04}.

\begin{corollary}
\label{Corollary:diameter-breadth}
Let $K$ be a convex body whose optimal covering ellipsoid is the unit ball,
and let $\{u_i\}_1^k$ be its contact points. Then
\begin{equation*}
\begin{split}
           \max_{x\in K}\langle{d,x}\rangle
           \max_{x\in K}\langle{-d,x}\rangle
\,&\geq\,  \max_i\langle{d,u_i}\rangle
           \max_i\langle{-d,u_i}\rangle
\,\geq\,   \frac{1}{n},
            \quad \forall d,\ ||d||=1,\\
           \max_{x\in K}\langle{d,x}\rangle
            +\max_{x\in K}\langle{-d,x}\rangle
\,&\geq\,   \max_i\langle{d,u_i}\rangle
            +\max_i\langle{-d,u_i}\rangle
\,\geq\,   \frac{2}{\sqrt{n}},
            \quad \forall d,\ ||d||=1.
\end{split}
\end{equation*}
Consequently, any convex body can be transformed by an affine map into a body,
for which the ratio of breadth to diameter is at least $1/\sqrt{n}$.
\end{corollary}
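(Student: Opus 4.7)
The first inequality in each line is immediate from $u_i\in K$, so the content lies in the two lower bounds $1/n$ and $2/\sqrt{n}$. I would normalize so that the minimum volume circumscribed ellipsoid is the unit ball $E(I_n,0)=\mathit{B}_n$, which allows use of the Fritz John conditions \eqref{eq:FJ-for-MVCE} together with the trace identity \eqref{eq:sum-of-lambdas-is-n}. Fix a unit vector $d$ and write
\[
M_+ := \max_i \langle{d,u_i}\rangle, \qquad M_- := \max_i \langle{-d,u_i}\rangle.
\]
Applying $\sum_i \lambda_i u_i u_i^T = I_n$ to $d$ yields $\sum_i \lambda_i \langle{d,u_i}\rangle^2 = \|d\|^2 = 1$.

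The crucial step is the elementary pointwise inequality
\[
(M_+ - \langle{d,u_i}\rangle)(\langle{d,u_i}\rangle + M_-) \,\geq\, 0,
\]
which holds for every $i$ simply by definition of $M_\pm$. Expanding, weighting by $\lambda_i>0$, and summing over $i$, the linear cross term vanishes because $\sum_i \lambda_i \langle{d,u_i}\rangle = \langle{d,\sum_i \lambda_i u_i}\rangle = 0$, while the remaining pieces reduce to $n M_+M_- - 1 \geq 0$ on account of \eqref{eq:sum-of-lambdas-is-n} and the relation above. This gives $M_+ M_- \geq 1/n$, the product bound. To deduce the sum bound, observe that $M_+, M_- > 0$: if, say, $M_+ \leq 0$, then $\langle{d,u_i}\rangle \leq 0$ for every $i$, and combined with $\sum_i \lambda_i \langle{d,u_i}\rangle = 0$ and positivity of the $\lambda_i$, this would force all $\langle{d,u_i}\rangle$ to vanish, contradicting $\sum_i \lambda_i\langle{d,u_i}\rangle^2 = 1$. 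The arithmetic--geometric mean inequality then gives $M_+ + M_- \geq 2\sqrt{M_+M_-} \geq 2/\sqrt{n}$.

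For the final geometric assertion, affinely transform $K$ so that $\ce(K)=\mathit{B}_n$. The diameter of the transformed body is at most $2$ since $K\subseteq \mathit{B}_n$, while its breadth equals
\[
\min_{\|d\|=1}\bigl(\max_{x\in K}\langle{d,x}\rangle+\max_{x\in K}\langle{-d,x}\rangle\bigr) \,\geq\, \tfrac{2}{\sqrt{n}}
\]
by the sum bound just proved. Hence breadth/diameter $\geq 1/\sqrt{n}$.

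There is no serious obstacle in this argument; the one creative step is spotting the factorization $(M_+-t)(M_-+t)\geq 0$ at $t=\langle{d,u_i}\rangle$. Once that is in hand, the condition $\sum_i \lambda_i u_i=0$ kills the linear term upon $\lambda$-averaging, and the Fritz John identities do the rest; the final ratio bound is then a one-line consequence of $K\subseteq\mathit{B}_n$.
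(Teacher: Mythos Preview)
Your proof is correct and follows essentially the same route as the paper's. The paper writes $s_P(d)$ and $s_P(-d)$ for your $M_+$ and $M_-$, uses the identical factorization $(s_P(d)-\langle d,u_i\rangle)(s_P(-d)+\langle d,u_i\rangle)\geq 0$, and sums against the $\lambda_i$ to obtain $n\,s_P(d)s_P(-d)\geq 1$; the sum bound is then deduced from $(s_P(d)+s_P(-d))^2\geq 4\,s_P(d)s_P(-d)$, which is your AM--GM step. Your extra justification that $M_\pm>0$ is a harmless addition the paper leaves implicit.
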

\begin{proof}
Define $P=\co(\{u_i\}_1^k)$.
Since $s_P(d)=\max_k\langle{d,u_k}\rangle\geq\langle{d,u_i}\rangle$ for any $i$, we have
\begin{equation*}
\begin{split}
       0
&\leq  \sum_{i=1}^k\lambda_i
        (s_P(d)-\langle{d,u_i}\rangle)
         (s_P(-d)+\langle{d,u_i}\rangle)\\
&=     (\sum_{i=1}^k\lambda_i)s_P(d)s_P(-d)
        -\sum_i\lambda_i\langle{d,u_i}\rangle^2
=      ns_P(d)s_P(-d)-1
\end{split}
\end{equation*}
where the equalities follow from \eqref{eq:FJ-for-MVCE}.
This proves the first line of relations in the corollary.  The second line
of relations follow from the first one using the inequality
$(s_P(d)+s_P(-d))^2\geq4s_P(d)s_P(-d)$.  Observe that the quantity
$s_K(d)+s_K(-d)=\max_{x\in K}\langle{d,x}\rangle+\max_{x\in K}\langle{-d,x}\rangle$
is the distance between the two parallel support planes of $K$ in direction $d$.
This proves the corollary for $K$ whose optimal ellipsoid $\ce(K)=B_n$.

The rest of the corollary follows since an arbitrary convex body can be transformed
by an affine transformation into another convex body whose optimal covering
ellipsoid is the unit ball.
\end{proof}

We now give a proof of Fritz John's celebrated result mentioned at the beginning
of this section. Our proof is simpler, and uses ideas from  Ball~\cite{Ball92} and
Juhnke~\cite{Juhnke94}.

\begin{theorem}
\label{thm:CE/n-is-n-K}
Let $K$ be a convex body in $\R^n$ and $E(X,c)=\ce(K)$ be its optimal circumscribing
ellipsoid. The ellipsoid with the same center $c$ but shrunk by a factor $n$ is
contained in $K$. If $K$ is symmetric ($K=-K$), then the ellipsoid with the same
center $c$ but shrunk only by a factor $\sqrt{n}$ is contained in $K$.
\end{theorem}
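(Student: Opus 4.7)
The plan is to reduce to the case $E(X,c) = B_n$ via an affine change of coordinates (which preserves containment and the shrinkage factor), and then characterize the desired inclusion via support functions. Specifically, $(1/n)B_n \subseteq K$ is equivalent to $s_K(d) \geq 1/n$ for every unit vector $d$, since $s_K$ determines the closed convex body $K$ and $s_{(1/n)B_n}(d) = 1/n$ for unit $d$. Similarly, in the symmetric case we need $s_K(d) \geq 1/\sqrt{n}$.

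Fix a unit vector $d$, write $s := s_K(d)$, and set $a_i := \langle d, u_i \rangle$ for the contact points $\{u_i\}_1^k$. Since $u_i \in K \subseteq B_n$, we have $\|u_i\| = 1$ and $K \subseteq \{x : \langle d,x\rangle \leq s\}$, so
\[ -1 \leq a_i \leq s, \qquad i = 1,\dots,k. \]
The optimality conditions \eqref{eq:FJ-for-MVCE} together with \eqref{eq:sum-of-lambdas-is-n} yield, by testing the identity $\sum_i \lambda_i u_i u_i^T = I_n$ on $d$ and the identity $\sum_i \lambda_i u_i = 0$ against $d$,
\[ \sum_{i=1}^k \lambda_i a_i^2 = 1, \qquad \sum_{i=1}^k \lambda_i a_i = 0, \qquad \sum_{i=1}^k \lambda_i = n. \]

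For the general case, the key observation is that both factors in $(s-a_i)(1+a_i)$ are non-negative, so
\[ 0 \leq \sum_{i=1}^k \lambda_i (s - a_i)(1 + a_i) = s\sum_i\lambda_i + s\sum_i\lambda_i a_i - \sum_i\lambda_i a_i - \sum_i\lambda_i a_i^2 = sn - 1, \]
which gives $s \geq 1/n$, as desired. For the symmetric case $K = -K$, the support function is even, $s_K(-d) = s_K(d) = s$, which strengthens the bound on $a_i$ to $|a_i| \leq s$. Then
\[ 1 = \sum_{i=1}^k \lambda_i a_i^2 \leq s^2 \sum_{i=1}^k \lambda_i = n s^2, \]
yielding $s \geq 1/\sqrt{n}$. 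Since $d$ was arbitrary, both inclusions follow.

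There is no serious obstacle in this plan; the only non-routine step is spotting the quadratic inequality $(s-a_i)(1+a_i) \geq 0$ that simultaneously exploits the covering bound $a_i \leq s$ and the normalization $a_i \geq -1$, which is exactly where the improvement from $1/\sqrt{n}$ to $1/n$ in the non-symmetric case costs us the extra factor. Everything else reduces to linear combinations of the Fritz John identities already established.
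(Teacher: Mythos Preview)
Your proof is correct and uses the same quadratic--in--$\langle d,u_i\rangle$ trick as the paper, but the organization is dual to the paper's. The paper passes to the polytope $P=\co(\{u_i\})$ of contact points and shows the \emph{stronger} inclusion $n^{-1}B_n\subseteq P$ by proving $P^\circ\subseteq nB_n$: for $x\in P^\circ$ one has $\langle x,u_i\rangle\leq 1$ and $\langle x,u_i\rangle\geq -\|x\|$, and then expands $\sum_i\lambda_i(1-\langle x,u_i\rangle)(\|x\|+\langle x,u_i\rangle)\geq 0$. You instead bound the support function of $K$ directly via $\sum_i\lambda_i(s-a_i)(1+a_i)\geq 0$; setting $x=d/s$ in the paper's inequality recovers yours. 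What the paper's route buys is the extra information that the shrunk ball already sits inside the convex hull of the contact points, not merely inside $K$; what your route buys is a slightly more direct argument that avoids introducing $P$ and $P^\circ$. In the symmetric case the two arguments are again the same computation in dual form.
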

\begin{proof}
Without loss of generality, we assume that $\ce(K)=E(I_n,0)=B_n$. The theorem states
that $n^{-1}B_n\subseteq K$.  Let
\[   P=\co\left(\{u_i\}_1^k\right) \]
be the convex hull of the contact points. We claim the stronger statement that
$n^{-1}B_n\subseteq P$. Since $P\subseteq K$, we will then have $n^{-1}B_n\subseteq K$.
By duality, the claim is equivalent to showing that the polar sets satisfy
$P^*\subseteq(n^{-1}B_n)^*=nB_n$.  Let $x\in P^*$. Since
$-||x||=-||x||\cdot||u_i||\leq\langle{x,u_i}\rangle\leq1$, we have
\begin{equation*}
\begin{split}
       0
&\leq  \sum_{i=1}^k\lambda_i(1-\langle{x,u_i}\rangle)(||x||+\langle{x,u_i}\rangle)\\
&=     (\sum_{i=1}^k\lambda_i)||x||-\sum_{i=1}^k\lambda_i(\langle{x,u_i}\rangle)^2
=      n||x||-||x||^2,
\end{split}
\end{equation*}
where the second equality follows from $\sum_i\lambda_i=n$ and \eqref{eq:FJ-for-MVCE}.
This implies $||x||\leq n$, and proves that $P^*\subseteq nB_n$.

If $K$ is symmetric, we define $Q=\co\left(\{\pm u_i\}_1^k\right)\subseteq K$ and
claim that $n^{-1/2}B_n\subseteq Q$, or equivalently, that
$Q^*\subseteq(n^{-1/2}B_n)^*=\sqrt{n}B_n$. It is easily shown that
$Q^*=\{x:|\langle{x,u_i}\rangle|\leq1, i=1,\ldots,k\}$.
Let $x\in Q^*$. Since $-1\leq\langle{x,u_i}\rangle\leq1$, we have
\[    0
\leq  \sum_{i=1}^k\lambda_i(1-\langle{x,u_i}\rangle)(1+\langle{x,u_i}\rangle)
=     n-||x||^2. \]
This gives $||x||\leq\sqrt{n}$ and proves the claim.
\end{proof}

\begin{remark}
\label{Remark:a-different-formulation-MVCE-problem-as-a-SIP}
The minimum volume covering ellipsoid problem may be set as a semi--infinite
program in a different way, by replacing the set inclusion $K\subseteq E(X,c)$
by the equivalent functional constraints $s_{E(X,c)}(d)\geq s_K(d)$, that is
by the constraints
\[    \langle{c,d}\rangle+\langle{Yd,d}\rangle^{1/2}
\geq  s_K(d),
       \quad \forall \,d,\,||d||=1,
\]
where we restrict $d$ to the unit sphere since support functions are homogeneous
(of degree 1), in order to get a compact indexing set.  The resulting semi--infinite
program is solved in the same way as \eqref{eq:MVCE-SIP}, and mirrors the solution to
the maximum volume inscribed ellipsoid problem given in \S\ref{sec:MVIE}.
\end{remark}


\section{The maximum volume inscribed ellipsoid problem}
\label{sec:MVIE}

Recall that the \emph{inscribed ellipsoid problem} is the problem of finding a maximum
volume ellipsoid inscribed in a convex body $K$ in $\R^n$. It will be seen that this
ellipsoid is unique as well, and we denote it by $\ie(K)$. As we mentioned in the Introduction,
this ellipsoid is often referred to as the John ellipsoid or L{\"o}wner--John ellipsoid.

In this section, we again use semi--infinite programming to treat this problem.
The inscribed ellipsoid has properties similar to those of the circumscribed
ellipsoid. For example, the ellipsoid with the same center but blown up $n$ times
contains $K$, and in the case $K$ is symmetric ($K=-K$), the ellipsoid needs to be blown
up by a smaller factor $\sqrt{n}$. The ellipsoid $\ie(K)$ is very useful in the
geometric theory of Banach spaces. It is also useful in some convex programming
algorithms, such as the \emph{inscribed ellipsoid method} of
Tarasov, Erlikh, and Khachiyan~\cite{TarasovKhachiyanErlikh88}.

As a first step, using \eqref{eq:vol(E(X,c))}, we can formulate the inscribed
ellipsoid problem as a semi--infinite program
\[ \min\{\det X: E(X,c)\subseteq K\}. \]
However, this is hard to work with, due to the inconvenient form of the constraints,
$E(X,c)\subseteq K$.  We replace this inclusion by the functional constraints
\[ s_{E(X,c)}(d)\leq s_K(d),\quad \forall\,d\in B_n, \]
where we again restrict $d$ to the unit sphere since support functions are homogeneous
(of degree 1), in order to get a compact indexing set.

Defining $Y=X^{-1}$, we can therefore rewrite our semi--infinite program
in the form
\begin{equation}
\label{eq:IE-SIP}
\begin{split}
    \min\quad  &-\log\det Y\qquad\qquad\qquad\qquad \\
    \st\quad   &\langle{c,d}\rangle+\langle{Yd,d}\rangle^{1/2}\leq s_K(d),
                 \quad \forall\,d:\,\|d\|=1,
\end{split}
\end{equation}
in which the decision variables are $(Y,c)\in S^n\times\R^n$ and we have
infinitely many constraints indexed by the unit vector $||d||=1$.

Since $s_K$ is a convex function on $\R^n$, it is continuous. Therefore,
there exists a positive constant $M>0$ such that if $(Y,c)$ is a feasible
decision variable, then $|\langle{c,d}\rangle|\leq M$, and
$\langle{Yd,d}\rangle\leq M$ for all $||d||=1$.  This proves that the
set of feasible $(Y,c)$ for problem \eqref{eq:IE-SIP} is compact, and implies that
there exists a maximum volume ellipsoid inscribed in $K$.

We derive the optimality conditions for the maximum volume inscribed
ellipsoid.

\begin{theorem}
\label{Theorem:FJ-conditions-for-IE}
Let $K$ be a convex body in $\R^n$. There exists an ellipsoid of maximum
volume inscribed in $K$. If $E(X,c)$ is such an ellipsoid, then
there exists a multiplier vector $\lambda=(\lambda_1,\ldots,\lambda_k)>0$,
$0\leq k\leq n(n+3)/2$, and contact points $\{u_i\}_1^k$ such that
\begin{equation}
\label{eq:FJ-for-MVIE-general}
\begin{split}
      X^{-1}
&=    \sum_{i=1}^k\lambda_i(u_i-c)(u_i-c)^T,\\
      0
&=    \sum_{i=1}^k\lambda_i(u_i-c),\\
      u_i
&\in  \partial K\cap\partial E(X,c),\quad i=1,\ldots,k,\\
      E(X,c)
&\subseteq
      K.
\end{split}
\end{equation}
\end{theorem}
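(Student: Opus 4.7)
The plan is to apply Fritz John's theorem (Theorem~\ref{Theorem:FJ-Theorem-For-SIP}) to the semi-infinite program \eqref{eq:IE-SIP}, in direct parallel with the proof of Theorem~\ref{Theorem:FJ-conditions-for-MVCE-SIP}. Existence of the optimal ellipsoid has already been established via compactness in the paragraph preceding the statement, so only the necessary conditions need to be derived. At an optimum $(Y,c)$ the matrix $Y$ is positive definite (this is what makes the underlying $X=Y^{-1}$ well-defined), so on the compact indexing set $\{d:\|d\|=1\}$ the function $\sqrt{\langle Yd,d\rangle}$ is strictly positive and continuously differentiable in $Y$, which is what is needed to invoke the theorem.

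Fritz John then produces at most $n(n+3)/2$ active directions $\{d_i\}_1^k$, each satisfying $\langle c,d_i\rangle+\sqrt{\langle Yd_i,d_i\rangle}=s_K(d_i)$, together with non-trivial non-negative multipliers $(\lambda_0,\lambda_1,\ldots,\lambda_k)$. Differentiating the Lagrangian
\[
L(Y,c,\lambda) = -\lambda_0\log\det Y + \sum_{i=1}^k\lambda_i\bigl(\langle c,d_i\rangle + \sqrt{\langle Yd_i,d_i\rangle} - s_K(d_i)\bigr)
\]
in $c$ and in $Y$ (using $\nabla_Y\log\det Y=Y^{-1}$ and $\nabla_Y\sqrt{\langle Yd,d\rangle}=d d^T/(2\sqrt{\langle Yd,d\rangle})$) yields
\[
\sum_{i=1}^k \lambda_i d_i = 0, \qquad \lambda_0\, Y^{-1}=\sum_{i=1}^k\frac{\lambda_i}{2\sqrt{\langle Yd_i,d_i\rangle}}\, d_i d_i^T.
\]
If $\lambda_0=0$, the second equation would write the zero matrix as a non-negative combination of positive semidefinite matrices $d_i d_i^T$, forcing every $\lambda_i=0$ and contradicting non-triviality. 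Hence $\lambda_0>0$ and we normalize $\lambda_0=1$.

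The remaining task is to convert these relations, which are indexed by the active directions $d_i$, into the contact-point form required by the theorem. By the support-function computation \eqref{eq:s-of-E(X,c)}, the unique maximizer of $\langle d_i,\cdot\rangle$ on $E(X,c)$ is $u_i := c + Yd_i/\sqrt{\langle Yd_i,d_i\rangle}\in\partial E(X,c)$, and because the $i$-th constraint is active this point also attains $s_K(d_i)$, so $u_i\in\partial K$. Setting $\tilde\lambda_i:=\lambda_i\sqrt{\langle Yd_i,d_i\rangle}/2>0$ and substituting $u_i-c=Yd_i/\sqrt{\langle Yd_i,d_i\rangle}$ in the two KKT identities above produces
\[
\sum_{i=1}^k \tilde\lambda_i(u_i-c) = 0, \qquad X^{-1}=Y=\sum_{i=1}^k\tilde\lambda_i(u_i-c)(u_i-c)^T,
\]
while the inclusion $E(X,c)\subseteq K$ is just primal feasibility. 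The one real subtlety I foresee is bookkeeping the change of variables between the directions $d_i$ and the contact points $u_i$—in particular, confirming that $\langle Yd_i,d_i\rangle>0$ at optimality (which is immediate from $Y$ being positive definite) so that the rescaling of multipliers is well defined and positive.
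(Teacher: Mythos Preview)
Your proposal is correct and follows essentially the same route as the paper's own proof: apply Fritz John to the support-function formulation \eqref{eq:IE-SIP}, rule out $\lambda_0=0$, then convert the active directions $d_i$ into contact points $u_i=c+Yd_i/\sqrt{\langle Yd_i,d_i\rangle}$ with rescaled multipliers. The only cosmetic differences are that the paper inserts a factor of $2$ in the Lagrangian to absorb the $1/2$ from differentiating the square root, and it rules out $\lambda_0=0$ via a trace argument rather than your positive-semidefiniteness observation; neither affects the substance.
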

\begin{proof}
The existence of a maximum volume ellipsoid inscribed in $K$ is already
proved above. Let $E(X,c)$ be such an ellipsoid. Define $Y=X^{-1}$.
Since constraints in \eqref{eq:IE-SIP} are indexed by $||d||=1$,
Theorem~\ref{Theorem:FJ-Theorem-For-SIP} applies: there exists a non--zero
multiplier vector $(\delta_0,\delta_1,\ldots,\delta_k)\geq0$, where
$k\leq n(n+3)/2$, $\delta_i>0$ for $i>0$, and directions $\{d_i\}_1^k$, $||d_i||=1$,
satisfying the conditions
\[      \langle{c,d_i}\rangle+\langle{Yd_i,d_i}\rangle^{1/2}
=       s_K(d_i), \]
such that the Lagrangian function
\begin{equation*}
      L(Y,c,\delta)
:=   -\delta_0\log\det Y
        +2\sum_{i=1}^k\delta_i
         \left[\langle{c,d_i}\rangle+\langle{Yd_i,d_i}\rangle^{1/2}-s_K(d_i)\right]
\end{equation*}
satisfies the optimality conditions
\begin{equation*}
\begin{split}
        0
&=      \nabla_cL(Y,c,\delta)
=       \sum_{i=1}^k\delta_id_i,\\
        0
&=      \nabla_YL(Y,c,\delta)
=        -\delta_0Y^{-1}
          +\sum_{i=1}^k\frac{\delta_i}{\langle{Yd_i,d_i}\rangle^{1/2}}d_id_i^T.
\end{split}
\end{equation*}
Recalling that $||d_i||=1$ and taking the trace of the right hand side of the
last equation above gives
$     \delta_0\tr(Y^{-1})
=     \sum_{i=1}^k\delta_i\langle{Yd_i,d_i}\rangle^{-1/2}.
$
If $\delta_0=0$, then all $\delta_i=0$, which contradicts $\delta\neq0$.
Therefore, $\delta_0\neq0$, and we let $\delta_0=1$.  Define
\[     u_i
:=     c+\langle{Yd_i,d_i}\rangle^{-1/2}Yd_i,
\quad
       \lambda_i
:=     \langle{Yd_i,d_i}\rangle^{1/2}\delta_i,
        \quad i=1,\ldots,k.
\]
We have $\langle{d_i,u_i}\rangle=\langle{c,d_i}\rangle+\langle{Yd_i,d_i}\rangle^{1/2}$, so that 
\[     s_{E(X,c)}(d_i)
=      s_K(d_i)
=      \langle{d_i,u_i}\rangle, \]
which means that $u_i\in\partial K\cap\partial E(X,c)$, that is $u_i$ is a contact
point. Rewriting the above optimality conditions in terms of $\{u_i\}$ and
$\{\lambda_i\}$ and simplifying, we arrive at the conditions
\eqref{eq:FJ-for-MVIE-general}.
\end{proof}

As in the circumscribed ellipsoid case, we have
\begin{corollary}
\label{Corollary:contact-points-of-MVIE-cannot-lie-in-any-halfspace}
Let $K$ be a convex body in $\R^n$. The contact points of $\ie(K)$
are not contained in any closed halfspace whose bounding hyperplane
passes through the center of $\ie(K)$.
\end{corollary}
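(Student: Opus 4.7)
The plan is to copy the argument from the circumscribed case, Corollary~\ref{Corollary:contact-points-of-MVCE-cannot-lie-in-any-halfspace}, essentially verbatim. The key observation is that the Fritz John conditions \eqref{eq:FJ-for-MVIE-general} share exactly the same algebraic structure as \eqref{eq:FJ-for-MVCE-general} in the three ingredients that drive that proof: strictly positive multipliers $\lambda_i > 0$, the centroid equation $\sum_{i=1}^k \lambda_i(u_i - c) = 0$, and the outer-product formula $X^{-1} = \sum_{i=1}^k \lambda_i(u_i - c)(u_i - c)^T$ with $X^{-1}$ positive definite. Nothing in the argument uses the distinction between the ellipsoid being circumscribed or inscribed, so the same reasoning transfers.

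Concretely, I would first normalize the centroid equation to exhibit $c$ as a strictly positive convex combination of the contact points, placing $c$ in the relative interior of $\co(\{u_i\}_1^k)$. Then, assuming for contradiction that all contact points lie in a closed halfspace $\{x : \langle a, x - c\rangle \geq 0\}$ for some nonzero $a \in \R^n$, I would pair the centroid identity with $a$. The resulting identity $\sum_i \lambda_i \langle a, u_i - c\rangle = 0$, combined with strict positivity of each $\lambda_i$ and non-negativity of each $\langle a, u_i - c\rangle$, forces $\langle a, u_i - c\rangle = 0$ for every $i$. This places every $u_i - c$ in the hyperplane $a^\perp$, whence $X^{-1} a = \sum_i \lambda_i (u_i - c)\langle u_i - c, a\rangle = 0$, contradicting the positive definiteness of $X^{-1}$.

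The main (and essentially only) subtlety to watch is the passage from the halfspace containment to all contact points lying on the bounding hyperplane through $c$: this step relies crucially on the strict positivity of the Fritz John multipliers $\lambda_i > 0$ guaranteed by Theorem~\ref{Theorem:FJ-conditions-for-IE}, together with the fact that $c$ itself lies on that hyperplane. Once the contact points are pinned to $a^\perp$, the contradiction with the nonsingularity of $X^{-1}$ (which is immediate from $X \succ 0$) closes the argument, and the corollary follows.
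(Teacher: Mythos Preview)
Your proposal is correct and follows essentially the same route as the paper: the paper simply observes that the Fritz John conditions \eqref{eq:FJ-for-MVIE-general} have the same form as \eqref{eq:FJ-for-MVCE-general}, so the argument for Corollary~\ref{Corollary:contact-points-of-MVCE-cannot-lie-in-any-halfspace} carries over verbatim. If anything, you are more careful than the paper, which for the circumscribed case writes only that $\sum_i\lambda_i(u_i-c)=0$ gives $c\in\co(\{u_i\}_1^k)$ and that this ``immediately implies'' the corollary; you correctly make explicit that the centroid identity alone only forces the $u_i$ onto the bounding hyperplane, and that the outer-product equation $X^{-1}=\sum_i\lambda_i(u_i-c)(u_i-c)^T$ (with $X^{-1}\succ 0$) is what rules this out.
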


We can simplify these conditions by assuming that the optimal ellipsoid is
the unit ball $E(I_n,0)$. Then the Fritz John conditions become
\begin{equation}
\label{eq:FJ-for-MVIE}
\begin{split}
       I_n
&=     \sum_{i=1}^k\lambda_iu_iu_i^T,\quad
       0
=     \sum_{i=1}^k\lambda_iu_i,\\
       u_i
&\in   \partial K\cap\partial \mathit{B}_n,
        \quad i=1,\ldots,k,\quad
       \mathit{B}_n
\subseteq
       K.
\end{split}
\end{equation}
We note that the optimality conditions \eqref{eq:FJ-for-MVIE} are exactly the
\emph{same} as the corresponding optimality conditions \eqref{eq:FJ-for-MVCE}
in the circumscribed ellipsoid case, except for the feasibility constraints
$\mathit{B}_n\subseteq K$.

\begin{theorem}
\label{Theorem:IE-uniqueness-and-sufficiency}
Let $K$ be a convex body in $\R^n$. The maximal volume ellipsoid inscribed in
$K$ is unique.  Furthermore, the optimality conditions \eqref{eq:FJ-for-MVIE-general}
are \emph{necessary and sufficient} for an ellipsoid $E(X,c)$ to be the maximal volume
inscribed ellipsoid of $K$.
\end{theorem}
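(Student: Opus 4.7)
The plan is to mirror the proof of Theorem~\ref{Theorem:MVCE-uniqueness-and-sufficiency} and again reduce everything to Lemma~\ref{Lemma:unit-ball-is-both-MVCE-and-IE}, this time using the second half of that lemma. Necessity of \eqref{eq:FJ-for-MVIE-general} is already contained in Theorem~\ref{Theorem:FJ-conditions-for-IE}, so only sufficiency and uniqueness remain. Since both the inscribed ellipsoid problem and the optimality conditions \eqref{eq:FJ-for-MVIE-general} are affinely invariant, an affine change of coordinates allows us to assume that the candidate ellipsoid is the unit ball $E(I_n,0)=B_n$. In these coordinates the hypotheses reduce to \eqref{eq:FJ-for-MVIE}: unit vectors $\{u_i\}_1^k$ with $\sum_i\lambda_iu_iu_i^T=I_n$, $\sum_i\lambda_iu_i=0$, $u_i\in\partial K\cap\partial B_n$, and $B_n\subseteq K$.

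The key step is to sandwich $K$ between $B_n$ and the polytope
\[ P^\circ=\{x\in\R^n:\langle{u_i,x}\rangle\leq1,\ i=1,\ldots,k\}, \]
which is precisely the polar of $P=\co(\{u_i\}_1^k)$ appearing in Lemma~\ref{Lemma:unit-ball-is-both-MVCE-and-IE}. The inclusion $B_n\subseteq K$ is given. For the upper inclusion $K\subseteq P^\circ$, I would argue as follows: each contact point $u_i$ lies in $\partial K$, so there is a supporting hyperplane of $K$ at $u_i$; because $B_n\subseteq K$, that hyperplane also supports $B_n$ at $u_i$. But $B_n$ is smooth, so its unique supporting hyperplane at $u_i$ is $\{x:\langle{u_i,x}\rangle=1\}$. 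Hence $K\subseteq\{x:\langle{u_i,x}\rangle\leq1\}$ for every $i$, giving $K\subseteq P^\circ$.

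With this sandwich in hand, Lemma~\ref{Lemma:unit-ball-is-both-MVCE-and-IE} states that $B_n$ is the unique maximum volume ellipsoid inscribed in $P^\circ$. Any ellipsoid inscribed in $K$ is, a fortiori, inscribed in the larger set $P^\circ$, so its volume is at most $\vol(B_n)$, with equality only when the ellipsoid equals $B_n$. Since $B_n\subseteq K$ is itself a feasible inscribed ellipsoid attaining this bound, we conclude that $B_n$ is the maximum volume ellipsoid inscribed in $K$ and that it is unique. This proves both sufficiency and uniqueness, and the remark following Theorem~\ref{Theorem:MVCE-uniqueness-and-sufficiency} (that the minimizer is unique up to congruence of the underlying body) applies verbatim.

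The only nontrivial step is the sandwich inclusion $K\subseteq P^\circ$, and even there the argument is short once one notices that smoothness of $B_n$ forces the supporting hyperplanes of $K$ at the contact points to be exactly the polar half-spaces defining $P^\circ$. Everything else is a direct appeal to the already-established Lemma~\ref{Lemma:unit-ball-is-both-MVCE-and-IE}, and no new optimization argument is needed.
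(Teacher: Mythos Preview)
Your argument is correct and is exactly the route the paper has in mind: the paper omits the proof, saying only that it ``uses Lemma~\ref{Lemma:unit-ball-is-both-MVCE-and-IE}'' and ``is very similar to the proof of Theorem~\ref{Theorem:MVCE-uniqueness-and-sufficiency}''. Your sandwich $B_n\subseteq K\subseteq P^\circ$ together with the second half of Lemma~\ref{Lemma:unit-ball-is-both-MVCE-and-IE} is precisely the dual of the paper's $P\subseteq K\subseteq B_n$ argument in the circumscribed case, and the supporting--hyperplane justification of $K\subseteq P^\circ$ (equivalently $s_K(u_i)=1$) is the same fact the paper invokes without comment in the proof of Theorem~\ref{thm:nIE-covers-K}.
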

The proof uses Lemma~\ref{Lemma:unit-ball-is-both-MVCE-and-IE}.  It is omitted since
it is very similar to the proof of Theorem~\ref{Theorem:MVCE-uniqueness-and-sufficiency}.

We end this section by proving an analogue of Fritz John's containment results
concerning $\ce(K)$.

\begin{theorem}
\label{thm:nIE-covers-K}
Let $K$ be a convex body in $\R^n$ and let $E(X,c)=\ie(K)$ be its optimal inscribed
ellipsoid. The ellipsoid with the same center $c$ but enlarged by a factor $n$
contains $K$. If $K$ is symmetric, then the ellipsoid with the same center
$c$ but enlarged by a factor $\sqrt{n}$ contains $K$.
\end{theorem}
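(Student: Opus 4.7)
The plan is to mirror the proof of Theorem \ref{thm:CE/n-is-n-K}, exploiting the duality between the circumscribed and inscribed cases. After an affine change of coordinates, I may assume $\ie(K)=\mathit{B}_n$, so the Fritz John conditions \eqref{eq:FJ-for-MVIE} hold for some contact points $u_1,\ldots,u_k\in\partial K\cap\partial \mathit{B}_n$ with positive multipliers $\lambda_i$ satisfying $\sum_i\lambda_i=n$. The goal then reduces to showing $K\subseteq n\mathit{B}_n$ in general, and $K\subseteq\sqrt{n}\,\mathit{B}_n$ when $K=-K$.

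The first and decisive step is to show that at each contact point $u_i$ the hyperplane $H_i=\{x:\langle{x,u_i}\rangle=1\}$ supports $K$. Since $\mathit{B}_n\subseteq K$ and $u_i\in\partial K$, any unit outer normal $v$ to a supporting hyperplane of $K$ at $u_i$ must satisfy $\langle{v,u_i}\rangle\geq s_{\mathit{B}_n}(v)=1$, while Cauchy--Schwarz gives $\langle{v,u_i}\rangle\leq||v||\cdot||u_i||=1$ with equality iff $v=u_i$. Hence $K\subseteq\bigcap_i\{x:\langle{x,u_i}\rangle\leq1\}=P^\circ$ where $P=\co(\{u_i\}_1^k)$. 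This dualization of ``contact point'' to ``active linear inequality'' is the substantive difference from the circumscribed case, and I expect it to be the main obstacle; everything that follows is formally identical algebra.

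The second step is to show $P^\circ\subseteq n\mathit{B}_n$, which is essentially the second claim in the proof of Theorem \ref{thm:CE/n-is-n-K}. For $x\in P^\circ$ one has $-||x||\leq\langle{x,u_i}\rangle\leq1$, and expanding
\[
0\leq\sum_{i=1}^k\lambda_i(1-\langle{x,u_i}\rangle)(||x||+\langle{x,u_i}\rangle)=n||x||-||x||^2,
\]
where the simplification uses $\sum_i\lambda_i=n$, $\sum_i\lambda_iu_iu_i^T=I_n$, and $\sum_i\lambda_iu_i=0$, yields $||x||\leq n$. Chaining $K\subseteq P^\circ\subseteq n\mathit{B}_n$ completes the general case.

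For the symmetric case, uniqueness of $\ie(K)$ combined with $\ie(-K)=-\ie(K)$ forces $\ie(K)$ to be centered at the origin, so the normalization $\ie(K)=\mathit{B}_n$ remains consistent with $K=-K$. From $K=-K\subseteq-P^\circ$ I then get $K\subseteq P^\circ\cap(-P^\circ)=Q^\circ$ with $Q=\co(\{\pm u_i\}_1^k)$, so for $x\in Q^\circ$ I have $|\langle{x,u_i}\rangle|\leq 1$ and
\[
0\leq\sum_{i=1}^k\lambda_i(1-\langle{x,u_i}\rangle)(1+\langle{x,u_i}\rangle)=n-||x||^2,
\]
giving $||x||\leq\sqrt{n}$, as required.
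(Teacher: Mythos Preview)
Your proof is correct and follows essentially the same route as the paper: normalize to $\ie(K)=\mathit{B}_n$, establish $K\subseteq P^\circ\subseteq n\mathit{B}_n$, and in the symmetric case pass to $Q^\circ$. The only minor difference is in justifying $K\subseteq P^\circ$: the paper invokes $s_K(u_i)=1$ directly (a fact built into the derivation of the Fritz John conditions via support functions in Theorem~\ref{Theorem:FJ-conditions-for-IE}), whereas you give a self-contained geometric argument via supporting hyperplanes, which is equally valid.
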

\begin{proof}
The proof here is similar to the proof of Theorem~\ref{thm:CE/n-is-n-K}.
Without loss of generality, we assume that $\ie(K)=E(I,0)=B_n$. The first part
of the theorem follows if we can prove the claim that
\[ K\subseteq P^*\subseteq nB_n. \]
Since $1=s_K(u_i)=\max_{x\in K}\langle{u_i,x}\rangle$, the first inclusion
holds true. If $x\in P^*$, then
$-||x||=-||x||\cdot||u_i||\leq\langle{x,u_i}\rangle\leq1$, and
\[    0
\leq  \sum_{i=1}^k\lambda_i(1-\langle{x,u_i}\rangle)(||x||+\langle{x,u_i}\rangle)
=     n||x||-||x||^2, \]
where the equality follows from $\sum_i\lambda_i=n$ and \eqref{eq:FJ-for-MVIE}.
This implies $||x||\leq n$, and proves the second inclusion in the claim.

If $K$ is symmetric, we define $Q=\co\left(\{\pm u_i\}_1^k\right)\subseteq K$
and claim that $K\subseteq Q^*\subseteq\sqrt{n}B_n$.  Since $1=s_K(\pm u_i)$,
we have $|\langle{u_i,x}\rangle|\leq1$, and the first inclusion follows.
To prove the second inclusion in the claim, let $x\in Q^*$. We have
$|\langle{x,u_i}\rangle|\leq1$, and the rest of the proof follows as in the
proof of Theorem~\ref{thm:CE/n-is-n-K}.
\end{proof}


\section{Automorphism group of convex bodies}
\label{sec:automorphism-group-of-convex-bodies}

Let $K$ be a convex body in $\R^n$.  The uniqueness of the two extremal ellipsoids
$\ce(K)$ and $\ie(K)$ have important consequences regarding the invariance properties
of the two ellipsoids.  We will see in this section that the symmetry properties
of the convex body $K$ is inherited by the two ellipsoids. If $K$ is
symmetric enough, then it becomes possible to give explicit formulae for the ellipsoids
$\ce(K)$ and $\ie(K)$.  We will demonstrate this for some special convex bodies in
the remaining Sections of this paper. 

Apart from its intrinsic importance, the invariance properties of the ellipsoids $\ce(K)$
and $\ie(K)$ have important applications to Lie groups \cite{DanzerLaugwitzLenz57},
\cite{OnishchickVinberg90}, to differential geometry \cite{Laugwitz65}, and to the computation of
the extremal ellipsoids for some special polytopes and convex bodies
\cite{Blekherman04}, \cite{BarvinokBlekherman05}, among others.

We start with a
\begin{definition}
The (affine) automorphism group $\Aut(K)$ of a convex body $K$ in $\R^n$ is the set of
affine transformation $T(x)=a+Ax$ leaving $K$ invariant, that is,
\[ \Aut(K) = \{ T(x)=a+Ax: T(K)=K\}. \]
\end{definition}
Note that since $0<\vol(K)=\vol(T(K))=|\det A|\vol(K)$, we have $|\det A|=1$.

It is shown in \cite{Gruber88} that the automorphism group of most convex bodies
consists of the identity transformation alone. This is to be expected, since the
symmetry properties of a given convex body can easily be destroyed by slightly perturbing
the body. Nevertheless, the study of the symmetry properties of convex bodies is
important for many reasons.

The ellipsoids are the most symmetric convex bodies. Therefore, we first investigate
their automorphism groups and then relate them to the automorphism groups of arbitrary
convex bodies.

\begin{definition}
\label{Definition:orthogonal-X-matrices}
Let $A$ be an invertible matrix in $\Sym^{n\times n}$.
Equip $\R^n$ with the quadratic form $\langle{Au,v}\rangle$ which we write as an
inner product
\[    \langle{u,v}\rangle_A:=\langle{Au,v}\rangle. \]
We denote by $O(\R^n,A)$ the set of linear maps \emph{orthogonal}
under this inner product,
\begin{equation*}
\begin{split}
      O(\R^n,A)
&:=   \{g\in\R^{n\times n}: g^*g=gg^*=I\}\\
&=    \{g\in\R^{n\times n}: \langle{gu,gv}\rangle_A=\langle{u,v}\rangle_A,
      \,\forall\, u,v\in\R^n\} \\
&=    \{g\in\R^{n\times n}: g^TAg=A\},
\end{split}
\end{equation*}
where $g^*$ is the conjugate matrix of $g$ with respect to the inner product
$\langle{\cdot,\cdot}\rangle_A$, that is $\langle{gu,v}\rangle_A=\langle{u,g^*v}\rangle_A$
for all $u,v$ in $\R^n$. The second equality above follows as
$\langle{gu,gv}\rangle_A=\langle{u,g^*gv}\rangle_A$ and this equals $\langle{u,v}\rangle_A$
if and only if $g^*g=I$. The third  equality follows since
$     \langle{g^TAgu,v}\rangle
=     \langle{Agu,gv}\rangle
=     \langle{gu,gv}\rangle_A
=     \langle{u,v}\rangle_A
=     \langle{Au,v}\rangle.
$
If $A$ is positive definite, then $(\R^n,\langle{\cdot,\cdot}\rangle_X)$
is a Euclidean space. In particular, $O_n:=O(\R^n,I)$ is the set of orthogonal
matrices in the usual inner product on $\R^n$.
\end{definition}

\begin{lemma}
\label{Lemma:Aut(E)}
If $X$ is a symmetric, positive definite ${n\times n}$ matrix, then
\[   \Aut(E(X,0)) = O(\R^n,X). \]
In particular, $\Aut(B_n)$ is the set of $n\times n$ orthogonal matrices.
We also have
\[   \Aut(E(X,c))
=    T_cO(\R^n,X)T_{-c},
\]
where $T_c$ is the translation map $T_cx=c+x$.
Moreover, $\Aut(E(X,c))$ fixes $c$, the center of $E(X,c)$, that is,
$\theta(c)=c$ for every $\theta$ in $\Aut(E(X,c))$.
\end{lemma}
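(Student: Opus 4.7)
The plan is to establish the four claims in order, treating the first equality $\Aut(E(X,0)) = O(\R^n,X)$ as the core and deriving the others by translation. The easy inclusion $O(\R^n,X)\subseteq \Aut(E(X,0))$ is immediate from Definition~\ref{Definition:orthogonal-X-matrices}: if $g^TXg = X$, then $\langle X(gx),gx\rangle = \langle Xx,x\rangle$ for every $x$, so $g$ preserves the quadratic form that defines $E(X,0)$ and hence the ellipsoid itself; invertibility of $g$ then upgrades the inclusion $gE(X,0)\subseteq E(X,0)$ to equality.

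The reverse inclusion is the main work and has two sub-steps. First I would show that every affine $T(x) = a + Ax$ satisfying $T(E(X,0)) = E(X,0)$ is in fact linear, i.e.\ $a = 0$. The cleanest argument is that the centroid of a convex body is affinely equivariant, so $T$ must fix the centroid of $E(X,0)$, which is $0$ by central symmetry of $E(X,0)$. (Equivalently: two distinct centers of symmetry of a bounded set would yield a nontrivial translation preserving that set, a contradiction.) Once linearity is known, a direct change of variables gives $AE(X,0) = E(A^{-T}XA^{-1},\,0)$, and uniqueness of the positive definite matrix representing a centered ellipsoid (since the unit level set of a positive definite quadratic form determines the form) forces $A^{-T}XA^{-1} = X$, that is, $A^TXA = X$, so $A\in O(\R^n,X)$.

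The remaining claims follow quickly. The statement about $\Aut(B_n)$ is just the case $X = I_n$, where $O(\R^n,I_n)$ is the ordinary orthogonal group. For $\Aut(E(X,c))$, observe that $E(X,c) = T_cE(X,0)$, so $T$ preserves $E(X,c)$ if and only if the conjugate $T_{-c}TT_c$ preserves $E(X,0)$; by the first part this is equivalent to $T \in T_c\,O(\R^n,X)\,T_{-c}$. Finally, any $\theta\in\Aut(E(X,c))$ may be written as $\theta = T_cgT_{-c}$ with $g$ linear and in $O(\R^n,X)$, so $\theta(c) = T_c(g(0)) = T_c(0) = c$.

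The main obstacle is the linearity step $a = 0$, since it is the only place where genuinely geometric (rather than purely algebraic) information about the ellipsoid is used; once this is in hand, everything else reduces to matrix identities and a single conjugation.
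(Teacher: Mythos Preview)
Your argument is correct. The overall architecture---prove the centered case, then conjugate by $T_c$, then read off $\theta(c)=c$---matches the paper, but your treatment of the centered case differs. The paper first handles $B_n$ by an explicit polynomial identity: writing $q(x)=\|a+Ax\|^2$ and using $q(x)-q(-x)=4\langle A^Ta,x\rangle=0$ on the unit sphere to force $a=0$, then $\langle A^TAx,x\rangle=1$ on the sphere to force $A^TA=I_n$; it then transports this to $E(X,0)$ via the linear map $X^{-1/2}$. You instead argue directly for general $X$, invoking affine equivariance of the centroid (or the center of symmetry) to get $a=0$, and then the fact that a centered ellipsoid determines its defining positive definite matrix uniquely to get $A^TXA=X$. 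Both routes are short and sound. Your centroid step is more conceptual and works uniformly in $X$; the paper's $q(x)-q(-x)$ trick is more hands-on but has the advantage that it is reused verbatim later in the paper (in the proof of Lemma~\ref{Lemma:Aut(slab)}), so there is some economy in proving it here.
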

\begin{proof}
We first determine $\Aut(B_n)$.  Let $T$ be in $\Aut(B_n)$, where $T(x)=a+Ax$. Since
$T$ maps the boundary of $B_n$ onto itself, we have
$q(x):=||a+Ax||^2=\langle{A^TAx,x}\rangle+2\langle{A^Ta,x}\rangle+||a||^2=1$
for all $||x||=1$.  Then $q(x)-q(-x)=4\langle{A^Ta,x}\rangle=0$ for all
$||x||=1$, which implies that $A^Ta=0$ and since $A$ is invertible, $a=0$.
Consequently, $q(x)=\langle{A^TAx,x}\rangle=1$ for all $||x||=1$, which gives
$A^TA=I_n$, that is, $A$ is an orthogonal matrix.

Next, we determine $\Aut(E(X,0))$. We have the commutative diagram
\begin{equation*}
   \begin{CD}
   B_n              @>X^{-1/2}>> E(X,0)        @>T_c>> E(X,c)\\
   @V{U}VV          @V{\theta_0}VV   @VV{\theta}V\\
   B_n              @>X^{-1/2}>> E(X,0)        @>T_c>> E(X,c)
   \end{CD}
\end{equation*}
where $\theta\in\Aut(E(X,c))$, $U\in\Aut(B_n)=O_n$, and $\theta_0\in\Aut(E(X,0))$.
From the diagram, we have
$I=U^TU=(X^{1/2}\theta_0 X^{-1/2})^T(X^{1/2}\theta_0 X^{-1/2})
=X^{-1/2}\theta_0^TX\theta_0 X^{-1/2}$. This gives $\theta_0^TX\theta_0=X$
and proves that $\Aut(E(X,0))=O(\R^n,X)$.

Since $T_c^{-1}=T_{-c}$, we have
\[   \Aut(E(X,c)) = T_c\circ\Aut(E(X,0))\circ T_{-c}. \]
Every $\theta$ in $\Aut(E(X,c))$ has the form
$    \theta(u)
=    (T_c\circ\theta_0\circ T_{-c})(u)
=    T_c(\theta_0(-c+u))
=    (c-\theta_0c)+\theta_0u$
for some $\theta_0$ in $\Aut(E_0)$. This gives $\theta(c)=c$,
meaning that $\Aut(E(X,c))$ fixes the center of $E$.
\end{proof}

\begin{definition}
\label{Definition:invariant-ellipsoid}
Let $K$ be a convex body in $\R^n$. An ellipsoid $E=E(X,c)$ is an \emph{invariant
ellipsoid} of $K$ if $\Aut(K)\subseteq\Aut(E)$, that is, if every automorphism of $K$
is an automorphism of $E$.
\end{definition}
It immediately follows from Lemma~\ref{Lemma:Aut(E)} that $\Aut(K)$ fixes the center of
any invariant ellipsoid $E$ of $K$.

The following Theorem in Danzer et al.~\cite{DanzerLaugwitzLenz57} is a central result
regarding the symmetry properties of the extremal ellipsoids.

\begin{theorem}
\label{Theorem:invariance}
Let $K$ be a convex body in $\R^n$.  The extremal ellipsoids $\ce(K)$ and $\ie(K)$ are
invariant ellipsoids of $K$. Thus, $\Aut(K)\subseteq\Aut(\ce(K))$,
$\Aut(K)\subseteq\Aut(\ie(K))$, and $\Aut(K)$ fixes the centers of $\ce(K)$ and $\ie(K)$.
\end{theorem}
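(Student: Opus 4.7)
The plan is to exploit the uniqueness of $\ce(K)$ and $\ie(K)$ (Theorems \ref{Theorem:MVCE-uniqueness-and-sufficiency} and \ref{Theorem:IE-uniqueness-and-sufficiency}) together with the fact that any affine automorphism $T\in\Aut(K)$ is volume-preserving. Specifically, if $T(x)=a+Ax$ lies in $\Aut(K)$, then as noted right after the definition of $\Aut(K)$ we have $|\det A|=1$, so $T$ preserves volumes of all measurable sets.

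First I would verify that the image of an ellipsoid under an invertible affine map is again an ellipsoid: if $E=c+S(B_n)$ with $S$ symmetric positive definite (as in \eqref{eq:ellipsoid-def-1}), then $T(E)=(a+Ac)+AS(B_n)$, which is an ellipsoid with the same volume as $E$ since $|\det(AS)|=|\det A|\cdot\det S=\det S$. Next I would carry out the uniqueness argument for $\ce(K)$: let $E=\ce(K)$ and $T\in\Aut(K)$. Then $T(E)$ is an ellipsoid with $T(E)\supseteq T(K)=K$, so $T(E)$ circumscribes $K$, and $\vol(T(E))=\vol(E)=\vol(\ce(K))$. By the uniqueness clause of Theorem \ref{Theorem:MVCE-uniqueness-and-sufficiency}, $T(E)=E$, i.e.\ $T\in\Aut(\ce(K))$. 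The same argument, replacing ``circumscribes'' by ``is inscribed in'' and applying Theorem \ref{Theorem:IE-uniqueness-and-sufficiency}, gives $T\in\Aut(\ie(K))$.

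For the final assertion about centers, I would simply invoke the last part of Lemma \ref{Lemma:Aut(E)}, which says that every element of $\Aut(E(X,c))$ fixes $c$. Since $\Aut(K)\subseteq\Aut(\ce(K))$, each element of $\Aut(K)$ fixes the center of $\ce(K)$; analogously for $\ie(K)$.

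There is no genuine obstacle here, the proof is essentially a one-line application of uniqueness. The only thing one must be careful about is ensuring that ``ellipsoid'' is preserved under affine maps and that the volume is genuinely preserved (rather than merely scaled); both are immediate once one recalls $|\det A|=1$ for $T\in\Aut(K)$.
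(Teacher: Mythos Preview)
Your proposal is correct and follows essentially the same approach as the paper: both argue that for $g\in\Aut(K)$ the ellipsoid $g(\ce(K))$ covers $K$ with the same volume as $\ce(K)$, then invoke uniqueness (Theorem~\ref{Theorem:MVCE-uniqueness-and-sufficiency}) to conclude $g(\ce(K))=\ce(K)$, and finally appeal to Lemma~\ref{Lemma:Aut(E)} for the statement about centers. The paper's version is slightly terser, leaving implicit the observations about affine images of ellipsoids and volume preservation that you spell out.
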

\begin{proof}
Since the arguments are similar, we only prove the statements about $\ce(K)$.
Let $g\in\Aut(K)$. Since $K\subseteq\ce(K)$ and $K=gK\subseteq g(\ce(K))$,
the ellipsoids $\ce(K)$ and $g(\ce(K))$ both cover $K$, and since
$\vol(\ce(K))=\vol(g(\ce(K)))$, they are both minimum volume circumscribed
ellipsoids of $K$. It follows from
Theorem~\ref{Theorem:MVCE-uniqueness-and-sufficiency} that $g(\ce(K))=\ce(K)$.
\end{proof}

\begin{corollary}
\label{Corollary:Aut(K)-is-compact}
The automorphism group $\Aut(K)$ of a convex body $K$ in $\R^n$ is a compact Lie group.
\end{corollary}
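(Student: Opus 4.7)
The plan is to leverage Theorem~\ref{Theorem:invariance} to sandwich $\Aut(K)$ inside the automorphism group of its circumscribed ellipsoid $\ce(K)$, which by Lemma~\ref{Lemma:Aut(E)} is essentially an orthogonal group, and then invoke Cartan's closed subgroup theorem.

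First, I would fix $E(X,c)=\ce(K)$. By Theorem~\ref{Theorem:invariance} we have the inclusion $\Aut(K)\subseteq\Aut(E(X,c))$, and by Lemma~\ref{Lemma:Aut(E)} the latter equals $T_c\,O(\R^n,X)\,T_{-c}$. Since conjugation by the translation $T_c$ is a homeomorphism of the affine group onto itself and carries Lie group structure to Lie group structure, it suffices to show that $O(\R^n,X)$ is a compact Lie group and that (the image of) $\Aut(K)$ is a closed subgroup of it.

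Next I would verify compactness and the Lie group structure of $O(\R^n,X)$. As a subset of $\R^{n\times n}$, it is cut out by the polynomial equations $g^TXg=X$, hence closed. Boundedness follows because any $g\in O(\R^n,X)$ preserves the inner product $\langle\cdot,\cdot\rangle_X$, so with respect to the $X$-norm we have $\|g\|=1$; equivalence of norms on $\R^{n\times n}$ then gives a uniform bound in the Euclidean norm. Thus $O(\R^n,X)$ is compact. Being a closed subgroup of the Lie group $GL_n(\R)$, Cartan's closed subgroup theorem makes it a Lie subgroup (it is in fact a classical linear algebraic group).

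Finally I would check that $\Aut(K)$ is closed inside $\Aut(E(X,c))$. If $T_k(x)=a_k+A_kx$ lies in $\Aut(K)$ and $T_k\to T$ with $T(x)=a+Ax$, then $a_k\to a$ and $A_k\to A$ uniformly on compact sets, so $T_k(y)\to T(y)$ uniformly in $y\in K$. Since $K$ is compact and each $T_k(K)=K$, passing to limits gives $T(K)\subseteq K$; the same argument applied to $T_k^{-1}$ (whose coefficients also converge, because inversion is continuous on $GL_n$) yields $T^{-1}(K)\subseteq K$, hence $T(K)=K$ and $T\in\Aut(K)$. Therefore $\Aut(K)$ is a closed subgroup of the compact Lie group $\Aut(E(X,c))$; Cartan's theorem upgrades it to a Lie subgroup, and closedness in a compact group yields compactness.

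The one genuine subtlety is the continuity argument in the last step together with the appeal to Cartan's closed subgroup theorem; everything else is bookkeeping. In particular, one must be careful that $\Aut(K)$ is being viewed inside the finite-dimensional Lie group of affine maps (not the infinite-dimensional homeomorphism group of $K$), which is exactly what the inclusion $\Aut(K)\subseteq\Aut(\ce(K))$ delivers.
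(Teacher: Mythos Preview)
Your proof is correct and follows essentially the same approach as the paper: inclusion $\Aut(K)\subseteq\Aut(\ce(K))$ via Theorem~\ref{Theorem:invariance}, compactness of $\Aut(\ce(K))$ from Lemma~\ref{Lemma:Aut(E)}, closedness of $\Aut(K)$ in the affine group, and then the closed subgroup theorem (which the paper attributes to von Neumann rather than Cartan). You simply spell out in more detail the compactness of $O(\R^n,X)$ and the closedness argument that the paper leaves implicit.
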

\begin{proof}
We have $Aut(K)\subseteq\Aut(\ce(K))$ by Theorem~\ref{Theorem:invariance}, and
Lemma~\ref{Lemma:Aut(E)} implies that $\Aut(\ce(K))$ is compact. Clearly, $\Aut(K)$
is a closed subset of the general affine linear group in $\R^n$. It follows that
$\Aut(K)$ is a compact group.  A classical theorem of von Neumann implies that it is
a Lie group.
\end{proof}

\begin{remark}
\label{Remark:invariance}
The existence of invariant (fixed) points and (symmetric positive definite)
matrices for $\Aut(K)$ have been demonstrated above using the invariance properties of
the either one of the extremal ellipsoids $\ce(K)$ and $\ie(K)$.  The same goal can be achieved
in at least two other ways, using the invariance properties of either the \emph{center of
gravity} of convex sets, or of the Haar probability measure $\mu_G$ on $G=\Aut(K)$. In a
certain sense, all three procedures are similar in that they all employ \emph{averaging}, but in
different ways.

By definition, the center of gravity of a convex body $K$ is the point
\[     \cg(K)
:=     \frac{\int_K x\,dx}{\int_Kdx}
=      \frac{\left(\int_K x_1\,dx,\ldots,\int_K x_n\,dx\right)}{\vol(K)}.
\]
One may think of $\cg(K)$ as the limit of the points
$p:=\sum_{i=1}^k(\vol(K_i)/\vol(K))x_i$ where $\{K_i\}_1^k$
is a partition of $K$ into subregions and $x_i$ in $K_i$.  Since $p\in K$, we see that $\cg(K)$ lies
in $K$.  If $T\in\Aut(K)$, then $Tp=\sum_{i=1}^k(\vol(K_i)/\vol(K))Tx_i=\sum_{i=1}^k(\vol(TK_i)/\vol(K))Tx_i$.
We see that both $\{p\}$ and $\{Tp\}$ converge to $\cg(K)$, proving that $\Aut(K)$ fixes the center of
gravity of $K$. 

\noindent
Moreover, one can prove the existence of an invariant ellipsoid for $K$, see \cite{OnishchickVinberg90},
pp. 130--135: it is easy to verify that the map $\pi:\Aut(K)\to GL(\Sym^{n\times n})$
given by the formula 
\[  \pi(g)(S)=(g\otimes g)(S):=gSg^T \]
is a \emph{representation} of the group $\Aut(K)$ on the matrix space $\Sym^{n\times n}$, 
that is, $\pi(gh)=\pi(g)\pi(h)$:
\[  \pi(gh)(S) = ghSh^Tg^T = g(\pi(h)(S)g^T = \pi(g)\pi(h)(S). \]
Consider the group $\mathcal{G}:=\pi(\Aut(K))$ and the orbit
\[ C_S=\mathcal{G}(S)=\{gSg^T: g\in\Aut(K)\} \]
where $S\in\Sym^{n\times n}$ is an arbitrary positive definite matrix. Define the convex body
$\mathcal{K}=\co(C_S)\subset\Sym^{n\times n}$. Clearly, the orbit $C_S$ is invariant under
the group $\mathcal{G}$, and thus so is the convex body $\mathcal{K}$. It follows from
the above argument that the center of gravity $Y=\cg(\mathcal{K})$ is fixed by $\mathcal{G}$,
that is, $gYg^T=Y$ for all $g$ in $\Aut(K)$.  This gives $(g^{-1})^TY^{-1}g^{-1}=S^{-1}$, or equivalently,
$g^TS^{-1}g=S^{-1}$ for all $g$ in $\Aut(K)$. This means that $X=Y^{-1}$ is invariant under $\Aut(K)$.
Then any ellipsoid $E(X,c)$, where $c\in K$ is any fixed point of $\Aut(K)$, say the center of
gravity of $K$, is an invariant ellipsoid of $K$.

Let $\mu=\mu_G$ be the unique Haar probability measure on $G=\Aut(K)$. If $f:G\to\R$ is a continuous 
function and $h,k\in G$ are arbitrary, we have 
\[ \int_G f(hg)\,d\mu(g)
=  \int_G f(g)\,d\mu(g)
=  \int_G f(gk)\,d\mu(g)
=  \int_G f(g^{-1})\,d\mu(g),
\]
where the first and second equalities express left and right invariance properties of the Haar integral, respectively.
If $x$ is any interior point of $K$, then the point $c:=\int_Ggx\,d\mu(g)$ lies in $interior(K)$ and is
fixed by $\Aut(K)$, for if $h\in\Aut(K)$, then $hc=\int_Ghgc d\,\mu(g)=\int_Ggc d\,\mu(g)=c$,
where the second equality follows from the left invariance of $\mu$.  Finally, the standard proof of
the existence of a positive definite invariant matrix $X$ found in most textbooks proceeds as follows:
start with any inner product $\langle{\cdot,\cdot}\rangle$ on $\R^n$,
and define the inner product
\[  [[u,v]]
:=  \int_G \langle{gu,gv}\rangle\,d\mu(g).
\]
This is an invariant inner product on $G$, because
\[  [[hu,hv]]
=   \int_G \langle{hgu,hgv}\rangle\,d\mu(g)
=   \int_G \langle{gu,gv}\rangle\,d\mu(g)
=   [[u,v]],
\]
where the second equality follows again from the left invariance of $\mu$.
We have $[[u,u]]$ non--negative and equal to zero if and only if $u=0$, because
the same thing is true for $\langle{u,u}\rangle$.
\end{remark}

We have seen that convex bodies give rise to compact affine groups through their
automorphism groups. Conversely, it is a well known fact in Lie group theory that
a compact Lie group can be imbedded as a closed subgroup of the linear group $GL(V)$
for some finite dimensional vector space $V$. This can be found in most books on
Lie groups, see for example \cite{Bump04}, \cite{OnishchickVinberg90}, \cite{Zelobenko73}.
If $G$ is a compact, affine Lie group in $\R^n$, then it is isomorphic to the compact
linear group
\[  \left\{\bar{T}=\begin{pmatrix}A &a\\0&1\end{pmatrix}: T\in G,\;T(x)=a+Ax\right\} \]
in $\R^{n+1}$. In fact, if $T(x)=a+Ax$, then
\[  \bar{T}\begin{pmatrix}x\\1\end{pmatrix}
=   \begin{pmatrix}a+Ax\\1\end{pmatrix}.
\]
Thus, the affine transformations of $\R^n$ are in one--to--one correspondence with the
linear transformations of $\R^{n+1}$ that keep the hyperplane $\{(x,1): x\in\R^n\}$ invariant.
If $G$ is a compact linear group in $\R^n$, then one can consider the convex set $K=\co(Gx)$
where $x$ is an arbitrary point in $\R^n$.  Clearly, $K$ is invariant under $G$, that is,
$G\subseteq\Aut(K)$.  In this way, one can obtain the following purely group theoretical result
employing one of the methods in Remark~\ref{Remark:invariance}, see \cite{DanzerLaugwitzLenz57},
\cite{OnishchickVinberg90}, \cite{Zelobenko73}:

\begin{theorem}
\label{Theorem:compact-linear-groups}
Let $G$ be a compact group of linear transformations on $\R^n$.  The group $G$ fixes a point in $\R^n$,
that is, there exists $c$ in $\R^n$ such that $gc=c$ for all $g$ in $G$.  Moreover, there exists a
positive definite matrix $X$ invariant under $G$, that is, $g^TXg=X$ for all $g$ in $G$.
\end{theorem}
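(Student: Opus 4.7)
My plan is to follow the Haar-measure averaging procedure already sketched in Remark~\ref{Remark:invariance}. Since $G$ is a closed subgroup of $GL(n,\R)$, by a classical theorem of von Neumann it is a Lie group, and being compact it carries a unique bi-invariant Haar probability measure $\mu$. Averaging an arbitrary vector against $\mu$ will produce the fixed point, and averaging the standard inner product against $\mu$ will produce an invariant inner product whose Gram matrix is the desired positive definite $X$.

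For the fixed point, I would pick any $x\in\R^n$ and set $c:=\int_G g(x)\,d\mu(g)$, interpreted coordinatewise. For any $h\in G$, linearity of $h$, its commutation with the (finite-dimensional) integral, and the left-invariance of $\mu$ give
\[ h(c) = \int_G (hg)(x)\,d\mu(g) = \int_G g(x)\,d\mu(g) = c. \]
For the invariant matrix, I would equip $\R^n$ with the standard inner product $\langle\cdot,\cdot\rangle$ and define
\[ [[u,v]] := \int_G \langle gu, gv\rangle\,d\mu(g). \]
The integrand is continuous on the compact group $G$, so the integral is well defined; symmetry and bilinearity in $(u,v)$ are immediate. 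For any $h\in G$, right-invariance of $\mu$ yields
\[ [[hu,hv]] = \int_G \langle (gh)u,(gh)v\rangle\,d\mu(g) = [[u,v]]. \]
Writing $[[u,v]] = \langle Xu,v\rangle$ for a unique symmetric $X\in\Sym^{n\times n}$, this identity becomes $\langle h^TXh\,u,v\rangle = \langle Xu,v\rangle$ for all $u,v$, hence $h^TXh=X$ for every $h\in G$.

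The one step that is not purely formal is verifying that $X$ is strictly positive definite, and this I expect to be the main (mild) obstacle. The integrand $g\mapsto\|gu\|^2$ is continuous, nonnegative, and equal to $\|u\|^2>0$ at $g=I\in G$ whenever $u\ne 0$; by continuity it is bounded below by a positive constant on some open neighborhood $U$ of $I$ in $G$. Since Haar measure on a compact Lie group has full support, $\mu(U)>0$, which forces $[[u,u]]>0$ and therefore makes $X$ positive definite. Together with the fixed-point construction above, this establishes both assertions of the theorem.
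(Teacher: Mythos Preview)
Your proposal is correct and follows essentially the same Haar-measure averaging approach that the paper sketches in Remark~\ref{Remark:invariance} (to which the paper refers for the proof). One small simplification: since every $g\in G$ is invertible, $\|gu\|^2>0$ for \emph{all} $g$ whenever $u\neq 0$, so the integral of this continuous strictly positive function against a probability measure is automatically positive---no appeal to the support of Haar measure or a neighborhood argument is needed (and, incidentally, for a group of \emph{linear} maps the fixed point $c=0$ is already available for free).
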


We remark that the Haar measure approach in Remark~\ref{Remark:invariance} already proves this
result directly, without considering orbits and convex sets.
One can also extend the theorem to affine compact groups, for example by considering the isomorphic
linear compact group in $\R^{n+1}$. However, the method of employing convex bodies does have its
merits.  For example, a simple proof of the \emph{algebraicity} of compact linear groups can be
found in \cite{OnishchickVinberg90}, pp. 130--135 and \cite{Zelobenko73}, Chapter 15,
using this approach.

The existence of the invariant quadratic form for compact Lie groups is a very important result,
since it implies that its linear representations are \emph{completely reducible}, that is,
its representations can be written as direct products of \emph{irreducible representations}.
For the group $\Aut(K)$, this simply means that the matrices appearing
in the linear parts of the affine transformations in $\Aut(K)$ must all have the same block
diagonal structure.

We also mention that such concepts as the fixed points of $\Aut(K)$ as well as its
invariant quadratic forms belong to the \emph{invariant theory}.  For example,
if $G$ is a linear group in $GL(\R^n)$, the \emph{invariant polynomials} of $G$
is the set of polynomials
\[     \R[x_1,\ldots,x_n]^G
:=     \{p\in\R[x_1,\ldots,x_n]: p(gx)=p(x),\ \forall\, g\in G,\ \forall\, x\in\R^n\}.
\]
The determination of the invariant polynomials for specific groups is one of the major
goals of invariant theory whose origins go back to the works of Cayley, Sylvester, Gordan,
Hilbert, etc. in the 19th century.  A major result going back to Hilbert in 1890s implies
that $\R[x_1,\ldots,x_n]^{\Aut(K)}$ is \emph{finitely generated}.  This means that there
exists finitely many invariant polynomials $S$ (which can be assumed homogeneous) such that
every invariant polynomial in $\R[x_1,\ldots,x_n]^{\Aut(K)}$ can be written as a polynomial
of elements of $S$.  See \cite{Zelobenko73}, pp. 280--281 for a fairly simple, direct proof.
A direct significance of this result for the ellipsoid $\ce(K)=E(X,c)$, say, is that
both $c$ and $X$ are invariant. Consequently, if the number of generators is small, then
the complexity of finding the ellipsoid $\ce(K)$ simplifies. This will
be demonstrated in the remaining Sections of our paper.

The automorphism group proves to be useful in other ways as well.  For example, the following
result of Davies~\cite{Davies74} is quite interesting.  Davies calls a convex body
$K$ in $\R^n$ \emph{symmetric} if $\Aut(K)$ acts \emph{transitively} on the extreme points
of $K$, that is, given any two points $x,y$ in $\ext(K)$, there exists a transformation
$g$ in $\Aut(K)$ such that $gx=y$.  He then proves the following result

\begin{lemma}
\label{Lemma:Davies}
If $K=\co(Gx)$ is a symmetric convex body in $\R^n$ in the sense Davies, then there exists a 
\emph{unique} fixed point of $\Aut(K)$ in $K$.
\end{lemma}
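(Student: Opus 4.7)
The plan is to prove the existence and uniqueness of the fixed point separately.

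\emph{Existence.} I would use the Haar-measure construction outlined in Remark~\ref{Remark:invariance}. Since $G := \Aut(K)$ is compact by Corollary~\ref{Corollary:Aut(K)-is-compact}, it carries a unique Haar probability measure $\mu_G$. Starting from any $y_0 \in K$ (for instance $y_0 = x$), the averaged point
\[
  c := \int_G g(y_0)\, d\mu_G(g)
\]
lies in $K$ because $K$ is convex and $g(y_0) \in K$ for every $g$, and it is fixed by $G$ because each $h \in G$ is affine and $\mu_G$ is left-invariant: $h(c) = \int_G (hg)(y_0)\, d\mu_G(g) = c$.

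\emph{Uniqueness.} Suppose $p, q \in K$ are both fixed by $G$. Because affine maps preserve midpoints, the point $m := (p+q)/2 \in K$ is fixed as well. I would translate coordinates so that $m$ moves to the origin; then each $g \in G$ satisfies $g(0) = 0$, so $g$ acts on $\R^n$ as the linear map $A_g$, and $\{A_g\}_{g \in G}$ is a compact linear group. The reduction goal becomes to show $p' := p - m = 0$.

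Applying Theorem~\ref{Theorem:compact-linear-groups} to $\{A_g\}$ furnishes an invariant positive definite matrix $X$, and equipping $\R^n$ with $\langle u, v\rangle_X := \langle Xu, v\rangle$ makes every $A_g$ orthogonal in this inner product. Since $A_g p' = p'$ for all $g$, the dual relation $A_g^{-1} p' = p'$ also holds, so for any extreme point $e_0$ of the translated body $K - m$,
\[
  \langle A_g e_0,\, p'\rangle_X = \langle e_0,\, A_g^{-1} p'\rangle_X = \langle e_0,\, p'\rangle_X.
\]
Transitivity of $G$ on $\ext(K)$ (hence on $\ext(K - m)$) therefore makes the linear functional $y \mapsto \langle y, p'\rangle_X$ constant on $\ext(K - m)$, and since $K - m = \co(\ext(K - m))$ by Minkowski's theorem, constant on all of $K - m$. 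Evaluating at $0 \in K - m$ shows this constant equals $0$; evaluating at $p' \in K - m$ then gives $\|p'\|_X^2 = 0$, whence $p' = 0$ and $p = m = q$.

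The main obstacle is the uniqueness step: the crux is recognizing that two distinct fixed points yield a fixed vector $p'$ in the translated body, and then leveraging transitivity on extreme points to force the invariant linear functional $\langle\cdot, p'\rangle_X$ to vanish on all of $K - m$, including on $p'$ itself. Once the invariant Euclidean structure from Theorem~\ref{Theorem:compact-linear-groups} is in place, the remainder is elementary linear algebra; existence follows essentially for free from the Haar-averaging argument.
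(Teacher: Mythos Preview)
Your existence argument matches the paper's exactly. Your uniqueness argument is correct but takes a genuinely different route from the paper.

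The paper's uniqueness proof is a direct Haar-integral computation: given \emph{any} fixed point $a\in K$, it writes $a=\sum_i\lambda_ig_ix$ as a convex combination of orbit points (possible because $K=\co(Gx)$), and then integrates:
\[
   a=\int_G ga\,d\mu(g)=\sum_i\lambda_i\int_G gg_ix\,d\mu(g)=\sum_i\lambda_i\int_G gx\,d\mu(g)=c,
\]
using the \emph{right} invariance of $\mu$ in the third equality. Thus every fixed point coincides with the specific barycenter $c$ built in the existence step. This is very short and uses nothing beyond bi-invariance of Haar measure and the convex-combination description of $K$.

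Your approach instead translates to make the midpoint of two hypothetical fixed points the origin, invokes Theorem~\ref{Theorem:compact-linear-groups} to obtain an invariant inner product $\langle\cdot,\cdot\rangle_X$, and then exploits transitivity on $\ext(K)$ to show that the linear functional $\langle\cdot,p'\rangle_X$ is constant on $K-m$, hence zero. This works, and it is a nice illustration of how an invariant Euclidean structure converts symmetry into orthogonality statements; it also avoids any appeal to right invariance of Haar measure. The trade-off is that it imports an additional ingredient (the invariant form from Theorem~\ref{Theorem:compact-linear-groups}) and several reduction steps, whereas the paper's argument stays entirely within the Haar-integral framework already set up for existence and is about three lines long.
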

\begin{proof}
Let $\mu$ be the Haar probability measure $\mu$ on the compact Lie group $G:=\Aut(K)$.
Let $x$ be an arbitrary point in $\ext(K)$.  Since $K$ is symmetric, $\ext(K)=Gx$. Define the point
\[ c := \int_G gxd\mu(g). \]
The point $c$ is invariant under the action of $G$, since if $h\in G$, we have
$hc=\int_G hgxd\mu(g)=\int_G gxd\mu(g)=c$, where the second equality is a consequence of the invariance
of $\mu$.  Now, if $a$ is any invariant point in $K$, then $ga=a$ for all $g$ in $G$, and we have
$a=\int_Ga\,d\mu(g)=\int_Gga\,d\mu(g)$.  By Minkowski theorem, we have $a\in\co(ext(K))=\co(Gx)$,
that is, $a=\sum_{i=1}^k\lambda_ig_ix$ for some $\{\lambda_i,g_i\}_1^k$ where $g_i$ in $G$ and $\lambda_i\geq0$,
$\sum_{i=1}^k\lambda_i=1$.  Therefore,
\[   a
=    \int_Gga\,d\mu(g)
=    \sum_{i=1}^k\lambda_i\int_Ggg_ix\,d\mu(g)
=    (\sum_{i=1}^k\lambda_i)\int_Ggx\,d\mu(g)
=    c,
\]
where third equality follows again from the invariance of the measure $\mu$.
\end{proof}

The lemma implies in particular that the center of gravity of a symmetric body $K$
is the only invariant point of $K$ under the action of $\Aut(K)$. Thus, the centers of
the ellipsoids $\ce(K)$ and $\ie(K)$ must coincide and be equal to the center of gravity of $K$.

It is also possible to determine a formula for the matrix $X$ in the circumscribing ellipsoid
$\ce(K)=E(X,c)$.

\begin{lemma}
Let $K\subset\R^n$ be a convex body, symmetric in the sense of Davies.  If $X$ is an invariant matrix
of $K$ such that $X^{-1}\in\co\left(\{(y-c)(y-c)^T: y\in\ext(K)\}\right)$ where $c$ is the invariant
point of $K$, then
\[   X^{-1}
=    \int_G g^{-1}(x-c)(x-c)^T(g^T)^{-1}\,d\mu(g),
\]
where $x$ is an arbitrary point in $\ext(K)$ and $\mu$ is the Haar probability measure on $K$.
\end{lemma}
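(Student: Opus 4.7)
The plan is to verify two facts: (i) the integral
\[ Y(x) := \int_G g^{-1}(x-c)(x-c)^T(g^T)^{-1}\,d\mu(g) \]
does not depend on the choice of $x \in \ext(K)$, and (ii) $Y(x) = X^{-1}$. Combined, these two assertions deliver the lemma.

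First I would tackle (i). Given two points $x, x' \in \ext(K)$, the Davies symmetry of $K$ (together with $\ext(K)=Gx$) yields an $h \in G$ with $x' = hx$. Since every element of $G$ fixes $c$ by Theorem~\ref{Theorem:invariance}, we may identify each $g\in G$ with its linear part relative to the basepoint $c$, so that $x' - c = h(x-c)$. Substituting and using the identity $h^T(g^T)^{-1} = (g^{-1}h)^T$ rewrites
\[ Y(x') = \int_G (g^{-1}h)(x-c)(x-c)^T(g^{-1}h)^T\,d\mu(g). \]
Replacing $g$ by $hg$ inside the integral, which is legitimate because $\mu$ is left-invariant, converts $g^{-1}h$ into $g^{-1}$ and shows $Y(x')=Y(x)$.

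For (ii), I would start from the invariance hypothesis $g^T X g = X$, which is equivalent to $g^{-1}X^{-1}(g^T)^{-1} = X^{-1}$ for every $g \in G$. Integrating this identity against the probability measure $\mu$ gives
\[ X^{-1} = \int_G g^{-1}X^{-1}(g^T)^{-1}\,d\mu(g). \]
By Carath\'eodory's theorem in the finite-dimensional space $\Sym^{n\times n}$, the hypothesis $X^{-1}\in\co\{(y-c)(y-c)^T:y\in\ext(K)\}$ furnishes a finite convex representation $X^{-1}=\sum_{i=1}^k\lambda_i(y_i-c)(y_i-c)^T$ with $y_i\in\ext(K)$, $\lambda_i \geq 0$, and $\sum_i\lambda_i=1$. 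Substituting into the displayed identity, interchanging the finite sum with the integral, and applying (i) to each $y_i$ produces
\[ X^{-1}=\sum_{i=1}^k\lambda_i\,Y(y_i) = Y(x)\sum_{i=1}^k\lambda_i = Y(x), \]
which is the desired formula.

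The step I expect to be the main obstacle is (i): one must exploit simultaneously that the affine action of $G$ reduces to a linear action once the origin is placed at $c$, and that $\mu$ is left-invariant, and choose the substitution $g\mapsto hg$ in precisely the right direction so that the extraneous factor $h$ cancels. The rest is a routine application of Carath\'eodory, the invariance of $X$ under the dual conjugation action $M\mapsto g^{-1}M(g^T)^{-1}$, and exchange of integration with a finite sum.
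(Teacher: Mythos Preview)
Your proof is correct and follows essentially the same route as the paper: both arguments integrate the identity $X^{-1}=g^{-1}X^{-1}(g^T)^{-1}$ over $G$, insert a finite convex representation $X^{-1}=\sum_i\lambda_i(y_i-c)(y_i-c)^T$ with $y_i\in\ext(K)$, and use the translation invariance of the Haar measure together with the transitivity of $G$ on $\ext(K)$ to collapse every term to the same integral $Y(x)$. Your explicit separation of step (i) and your invocation of Carath\'eodory are cosmetic refinements; the only other difference is that the paper phrases the key substitution as right invariance whereas you (correctly) invoke left invariance---on a compact group the Haar measure is bi-invariant, so either works.
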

\begin{proof}
Pick an arbitrary point $x$ in $\ext(K)$ such that $\ext(K)=Gx$. We may assume without any loss of
generality that $\cg(K)=0$.  Since $X$ is an invariant matrix, we have $g^TXg=X$ or
$X^{-1}=g^{-1}X^{-1}g^{-T}$ where we defined $g^{-T}:=(g^{-1})^T=(g^T)^{-1}$.
Let $X^{-1}=\sum_{i=1}^k\lambda_iy_iy_i^T=\sum_{i=1}^k\lambda_ih_ixx^Th_i^T$ where $h_i\in G$.
We have
\begin{equation*}
\begin{split}
     X^{-1}
&=   \int_G g^{-1}X^{-1}g^{-T}\,d\mu(g)
=    \sum_{i=1}^k\lambda_i\int_G (g^{-1}h_i)xx^T(g^{-1}h_i)^T\,d\mu(g)\\
&=   \sum_{i=1}^k\lambda_i\int_G g^{-1}xx^Tg^{-T}\,d\mu(g)
=    \int_G g^{-1}xx^Tg^{-T}\,d\mu(g),
\end{split}
\end{equation*}
where the last equality follows from the right invariance of the Haar measure.
\end{proof}

\begin{corollary}
Let $K$ be a convex body in $\R^n$ symmetric in the sense of Davies.
The extremal covering ellipsoid of $\ce(K)=E(X,c)$ has center $c=\cg(K)$ and
\[   X^{-1} =  n\int_G g^{-1}(x-c)(x-c)^Tg^{-T}\,d\mu(g),
\]
where $x\in\ext(K)$ is an arbitrary point and $\mu$ is the Haar probability measure on $K$.
\end{corollary}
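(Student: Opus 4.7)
The plan is to assemble the corollary from four ingredients already in the paper: Theorem~\ref{Theorem:invariance}, Lemma~\ref{Lemma:Davies}, the Fritz John necessary conditions (Theorem~\ref{Theorem:FJ-conditions-for-MVCE-SIP}, strengthened via Remark~\ref{Remark:contact-points-of-MVCE-2}), and the preceding lemma. I would first pin down the center of $\ce(K) = E(X,c)$. By Theorem~\ref{Theorem:invariance} together with Lemma~\ref{Lemma:Aut(E)}, the group $\Aut(K)$ fixes $c$, and since $K$ is Davies-symmetric, Lemma~\ref{Lemma:Davies} identifies the unique fixed point in $K$ with $\cg(K)$. Hence $c = \cg(K)$.

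Next I would check that $X$ is an invariant matrix in the sense of Definition~\ref{Definition:orthogonal-X-matrices}. Every $T \in \Aut(K) \subseteq \Aut(\ce(K))$ fixes $c$, so by Lemma~\ref{Lemma:Aut(E)} it has the form $T(u) = c + g(u-c)$ with $g^T X g = X$. This is exactly the hypothesis on $X$ used in the preceding lemma.

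To pull out the explicit formula, I would invoke Fritz John. Theorem~\ref{Theorem:FJ-conditions-for-MVCE-SIP}, with contact points chosen in $\ext(K)$ via Remark~\ref{Remark:contact-points-of-MVCE-2}, produces $X^{-1} = \sum_i \lambda_i (u_i-c)(u_i-c)^T$ with $\lambda_i > 0$ and $u_i \in \ext(K)$. Tracing the identity against $X$ and using $\langle X(u_i-c), u_i-c \rangle = 1$ yields $\sum_i \lambda_i = n$, so $n^{-1} X^{-1}$ lies in $\co\{(y-c)(y-c)^T : y \in \ext(K)\}$. The natural temptation is to apply the preceding lemma directly to $X$, but that lemma requires $X^{-1}$ itself to be a true convex combination, not just a conic one. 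The clean workaround is to apply the lemma to the rescaled matrix $nX$, which is still invariant and whose inverse $(nX)^{-1} = n^{-1} X^{-1}$ now satisfies the convex-hull hypothesis. The lemma then delivers $n^{-1} X^{-1} = \int_G g^{-1}(x-c)(x-c)^T g^{-T}\,d\mu(g)$ for any $x \in \ext(K)$ (the choice of representative is legitimate because $\Aut(K)$ acts transitively on $\ext(K)$ by the Davies hypothesis), and multiplying through by $n$ produces the stated formula.

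The only delicate point is this factor of $n$, which arises from the Fritz John normalization $\sum_i \lambda_i = n$ (as in equation~\eqref{eq:sum-of-lambdas-is-n}) rather than $\sum_i \lambda_i = 1$: rescaling to $nX$ before invoking the preceding lemma is what makes the factor emerge cleanly in the final expression rather than requiring a separate correction.
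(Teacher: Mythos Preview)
Your proposal is correct and follows essentially the same route as the paper, which simply cites the preceding lemma, Theorem~\ref{Theorem:FJ-conditions-for-MVCE-SIP}, and equation~\eqref{eq:sum-of-lambdas-is-n} as an ``immediate consequence.'' Your rescaling trick (applying the lemma to $nX$ so that $(nX)^{-1}$ genuinely lies in the convex hull) is a clean way to extract the factor of $n$ that the paper leaves implicit, and your explicit invocations of Remark~\ref{Remark:contact-points-of-MVCE-2} (to place contact points in $\ext(K)$) and Lemma~\ref{Lemma:Davies} (to identify $c=\cg(K)$) fill in details the paper omits.
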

This is an immediate consequence of the above lemma, Theorem~\ref{Theorem:FJ-conditions-for-MVCE-SIP},
and \eqref{eq:sum-of-lambdas-is-n}.

Many interesting questions and research directions remain regarding the automorphism groups of
convex bodies.  It is not practical to investigate these in this paper; doing so would
increase the size of the paper beyond reasonable bounds and also change its character.
We plan to pursue these issues in future papers.


\section{Invariance properties of a slab}
\label{sec:invariance-properties-of-a-slab}

In this paper, one of the problems we are interested in is the determination of the extremal
ellipsoids of the convex body $K$  which is the part of a given ellipsoid $E(X_0,c_0)$ between
two parallel hyperplanes,
\begin{equation*}
    K = \{x : \langle{X_0(x-c_0),\,x-c_0}\rangle\leq1,\, a\leq\langle{p,\,x-c_0}\rangle\leq b\},
\end{equation*}
where $p$ is a non--zero vector in $\R^n$, and where $a$ and $b$ are such that $K$ is nonempty.
Recall that we call such a convex body $K$ a slab.

In this section, we determine $\Aut(K)$ and the form of the ellipsoids
$\ce(K)$ and $\ie(K)$.

If we substitute $u=X_0^{1/2}(x-c_0)$, that is, $x=c_0+X_0^{-1/2}u$,
the quadratic inequality $\langle{X_0(x-c_0),x-c_0}\rangle\leq\,1$ becomes
$||u||\leq1$ and making the further substitution $q:=X_0^{-1/2}p$,
the linear form $\langle{p,x-c_0}\rangle$ becomes
\[ \langle{p,x-c_0}\rangle
=  \langle{X_0^{-1/2}p,X_0^{1/2}(x-c_0)}\rangle
=  \langle{q,u}\rangle. \]
Defining $\overline{p}=q/||q||$, $\alpha=a/||q||$ and
$\beta=b/||q||$, the linear inequalities
$a\leq\langle{p,x-c_0}\rangle\leq b$ reduce to
$\alpha\leq\langle{\overline{p},u}\rangle\leq\beta$.
Altogether, these substitutions give
\[   K\,
=\   \{c_0+X_0^{-1/2}u : ||u||\leq1,\;
      \alpha\leq\langle{\overline{p},u}\rangle\leq\beta
     \}.
\]
Let $Q$ be an orthogonal matrix such that $Qe_1=\overline{p}$, where
$e_1=(1,0,\ldots,0)^T$ in $\R^n$. Defining $v=Q^{-1}u$, we finally have
\[   K\,
=\,  \{c_0+X_0^{-1/2}Qv : ||v||\leq1,\;
      \alpha\leq\langle{e_1,v}\rangle\leq\beta
     \}, \]
that is, $K=c_0+X_0^{-1/2}Q(\tilde{K})$, where
$\tilde{K}=\{v : ||v||\leq1,\, \alpha\leq\langle{e_1,v}\rangle\leq\beta\}$.

Since an affine transformation leaves ratios of volumes unchanged,
we assume from here on, without loss of any generality, that our initial convex
body $K$, which we denote by $\slab$, has the form
\begin{equation}
\label{eq:slab}
    \slab
=   \{x\in\R^n: ||x||\leq1,\;\alpha\leq x_1\leq\beta\},
\end{equation}
where $-1\leq\alpha<\beta\leq1$.

\begin{remark}
To simplify our proofs, we assume in this paper that $\beta^2\geq\alpha^2$. We can
always achieve this by working with the convex body $-\slab$ instead of $\slab$,
if necessary.
\end{remark}

We use the symmetry properties of $\slab$ to determine the possible
forms of its extremal ellipsoids. This idea seems to be first suggested in
\cite{KonigPallaschke81} for determining $\ce(\slab)$.

\begin{lemma}
\label{Lemma:Aut(slab)}
If $\alpha=-1$ and $\beta=1$, then the automorphism group of the
slab $\slab=B_n$ consists of the $n\times n$ orthogonal matrices.  In the remaining
cases, the automorphism group $\Aut(\slab)$ consists of linear transformations $T(u)=Au$
where $A$ is a matrix of the form
\[     A
=      \begin{bmatrix} a_{11}&0\\0&\bar{A}\end{bmatrix},
        \quad  a_{11}\in\R,\; \bar{A}\in O_{n-1},
\]
with $a_{11}=1$ if $\alpha\neq-\beta$ and $a_{11}=\pm1$ if $\alpha=-\beta$.
\end{lemma}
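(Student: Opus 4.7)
My plan is to exploit the boundary face structure of $\slab$. The case $\alpha=-1,\beta=1$ gives $\slab=B_n$ and is immediate from Lemma~\ref{Lemma:Aut(E)}. In the remaining cases, the $(n-1)$-dimensional flat faces of $\slab$ are $F_\alpha := \slab\cap\{x_1=\alpha\}$ (present exactly when $\alpha>-1$) and $F_\beta := \slab\cap\{x_1=\beta\}$ (present exactly when $\beta<1$); the rest of $\partial\slab$ is the spherical band $S := \slab\cap\{\|x\|=1\}$. Since any affine bijection of $\slab$ sends supporting hyperplanes to supporting hyperplanes and preserves the dimension of their intersection with $\slab$, every $T\in\Aut(\slab)$ permutes $\{F_\alpha,F_\beta\}$ and hence sends $S$ to $S$.

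The first step pins down the first row of the linear part $A$ and the first coordinate $a_1$ of the translation. If both flat faces are present and $T$ fixes each setwise, the affine functional $x_1\circ T$ agrees with $x_1$ on the two parallel hyperplanes $\{x_1=\alpha\}$ and $\{x_1=\beta\}$, hence on all of $\R^n$, giving $x_1\circ T=x_1$. If $T$ swaps them, the same interpolation gives tentatively $x_1\circ T=\alpha+\beta-x_1$. When only one flat face is present, $T$ must fix it, and using that $T(\slab)=\slab$ preserves $\inf_{x\in\slab}x_1$ one concludes $x_1\circ T=x_1$. In every case the first row of $A$ is $(\varepsilon,0,\ldots,0)$ with $\varepsilon\in\{-1,+1\}$.

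The second step uses $T(S)\subseteq S$. Writing $A=\begin{pmatrix}\varepsilon & 0\\ v & \bar A\end{pmatrix}$ and $a=(a_1,\bar a)^T$, the identity $\|T(x)\|^2=1$ on $S$ together with $\|x\|^2=1$ yields
\[
  \|\bar a + v\,x_1 + \bar A\,\bar x\|^2 = 1 - (\varepsilon x_1 + a_1)^2,
\]
valid for all $x_1\in[\alpha,\beta]$ and $\bar x\in\R^{n-1}$ with $\|\bar x\|^2=1-x_1^2$. Replacing $\bar x$ by $-\bar x$ (which keeps $x$ in $S$) kills the cross term and forces $\|\bar A\bar x\|$ to depend only on $\|\bar x\|$, so $\|\bar A\bar x\|=\mu\|\bar x\|$ for some $\mu\ge 0$. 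Matching coefficients of the resulting polynomial identity in $x_1$ across the nondegenerate interval $[\alpha,\beta]$ forces $\mu=1$, $v=0$, $\bar a=0$, and -- critically -- in the swap case also $\alpha+\beta=0$. Thus the swap is geometrically realized only when $\alpha=-\beta$, and in all cases $\bar A\in O_{n-1}$ and $a=0$, giving the block form claimed.

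The main obstacle I anticipate is this polynomial-matching step: the three equations from the $x_1^2$, $x_1^1$, $x_1^0$ coefficients must pin down all of $\mu,v,\bar a$ simultaneously, and the $\varepsilon=-1$ branch must be shown geometrically forbidden unless $\alpha+\beta=0$. The face-permutation observation is combinatorial and the antipodal symmetrization is routine. The converse direction -- that every $A$ of the stated block form lies in $\Aut(\slab)$ -- is immediate: such $A$ preserves $\|x\|$ by orthogonality of $\bar A$ and preserves the constraint $\alpha\le x_1\le\beta$ either pointwise (when $\varepsilon=1$) or via $x_1\mapsto-x_1$ (when $\varepsilon=-1$ and $\alpha=-\beta$).
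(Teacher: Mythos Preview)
Your proof is correct. The core device---evaluating $\|T(x)\|^2=1$ on the spherical part of $\partial\slab$, symmetrizing via $\bar x\mapsto-\bar x$, and matching coefficients of the resulting quadratic in $x_1$---is exactly what the paper does. The polynomial-matching step you flag as the main obstacle is in fact clean: the $x_1^2$ coefficient gives $\mu^2=1+\|v\|^2$, and substituting into the $x_1^0$ coefficient yields $\|\bar a\|^2+\|v\|^2+a_1^2=0$, forcing $\bar a=0$, $v=0$, $a_1=0$ (hence $\alpha+\beta=0$ in the swap branch) and $\mu=1$; the $x_1^1$ equation is then automatic.

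Where you differ from the paper is in the order of attack. The paper works with a fully general block $A=\begin{bmatrix}a_{11}&c^T\\ b&\bar A\end{bmatrix}$, carries both off-diagonal blocks through the spherical computation, derives the six scalar relations \eqref{eq:final-cond-first-set}--\eqref{eq:final-cond-second-set}, assembles them into $A^TA=kI_n$, invokes $|\det A|=1$ to get $k=1$, and only at the very end uses the linear constraints $\alpha\le x_1\le\beta$ to conclude $Ae_1=\pm e_1$. You instead front-load the face-permutation observation: since the $(n-1)$-dimensional flat faces must be permuted by any affine automorphism, the first row of $A$ is $(\varepsilon,0,\ldots,0)$ from the outset, so only the lower-left block $v$ remains and the subsequent algebra is shorter (and never needs $|\det A|=1$). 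Your route is more geometric and economical; the paper's is a uniform brute-force expansion. Two small wrinkles: in the one-flat-face case (which under the standing assumption $\beta^2\ge\alpha^2$ is $\beta=1$, $\alpha>-1$) you should appeal to $\sup_{x\in\slab}x_1$ rather than $\inf$; and your ``hence sends $S$ to $S$'' is most cleanly justified by noting $S=\ext(\slab)$, which is precisely how the paper itself opens its argument.
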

\begin{proof}
It is proved in Lemma~\ref{Lemma:Aut(E)} that $\Aut(B_n)=O_n$, so we consider
the remaining cases.

Let $T(x)=a+Au$ be an automorphism of $\slab$.  We write
$A=\begin{bmatrix}a_{11}&c^T\\b&\bar{A}\end{bmatrix}$ and $a=(a_1,\bar{a})$,
where $a_{11}$ and $a_1$ are scalars and the rest of the variables have the
appropriate dimensions.  Since $T$ is an invertible affine map,
$T(\ext(\slab))=\ext(\slab)$, where $\ext(\slab)$ is the set of extreme
points of $\slab$ given by
\[    \ext(\slab)
=     \left\{u=(u_1,(1-u_1^2)^{1/2}\bar{u})\in\R\times\R^{n-1}:
      \alpha\leq u_1\leq\beta, \,||u||=1, \,||\bar{u}||=1\right\}.
\]
If $u=(u_1,(1-u_1^2)^{1/2}\bar{u})$ is in $\ext(\slab)$ with $||\bar{u}||=1$, then
\[     w
:=     a+Au
=      \begin{bmatrix}
           a_{11}u_1+(1-u_1^2)^{1/2}\langle{c,\bar{u}}\rangle+a_1\\
           u_1b+(1-u_1^2)^{1/2}\bar{A}\bar{u}+\bar{a}
       \end{bmatrix}.
\]
We have $||w||=1$, that is,
\begin{equation}
\label{eq:||w||=1}
\begin{split}
       1
&=     (a_{11}u_1+a_1)^2
        +(1-u_1^2)\langle{c,\bar{u}}\rangle^2
         +2(1-u_1^2)^{1/2}\langle{c,\bar{u}}\rangle(a_{11}u_1+a_1)\\
&\quad    \ +||u_1b+\bar{a}||^2
            +(1-u_1^2)||\bar{A}\bar{u}||^2
             +2(1-u_1^2)^{1/2}\langle{\bar{A}\bar{u}, u_1b+\bar{a}}\rangle\\
&=     \left\lbrace
          (a_{11}u_1+a_1)^2+||u_1b+\bar{a}||^2
       \right\rbrace
        +(1-u_1^2)
          \left\langle
             (\bar{A}^T\bar{A}+cc^T)\bar{u}, \bar{u}
          \right\rangle\\
&\quad   \ +2(1-u_1^2)^{1/2}
           \left\langle
              (a_{11}u_1+a_1)c+\bar{A}^T(u_1b+\bar{a}), \bar{u}
           \right\rangle\\
&=:     q(\bar{u}),\quad\forall ||\bar{u}||=1.
\end{split}
\end{equation}
Fix $u_1\in(\alpha,\beta)$, so that $1-u_1^2\neq0$.  The argument used in
the proof of Lemma~\ref{Lemma:Aut(E)} shows that
\begin{equation}
\label{eq:linear-term-of-q(tilde{u})-vanishes}
       (a_{11}u_1+a_1)c+\bar{A}^T(u_1b+\bar{a})
=      0,\quad\forall\, u_1\in(\alpha,\beta),
\end{equation}
and that
$      (1-u_1)^2
       \langle
          (\bar{A}^T\bar{A}+cc^T)\bar{u}, \bar{u}
       \rangle
=      \left\lbrace
          1-(a_{11}u_1+a_1)^2-||u_1b+\bar{a}||^2
       \right\rbrace
       ||\bar{u}||^2$ for all $\bar{u}\in\R^{n-1},
$
that is,
$      (1-u_1)^2
       (\bar{A}^T\bar{A}+cc^T)
=      \left\lbrace
           1-(a_{11}u_1+a_1)^2-||u_1b+\bar{a}||^2
       \right\rbrace
       I_{n-1},
$
for all $u_1\in(\alpha,\beta)$.  Therefore, there exists a constant $k$ such that
\begin{equation*}
\begin{split}
       kI_{n-1}
&=     \bar{A}^T\bar{A}+cc^T,\\
       0
&=     (a_{11}u_1+a_1)^2+||u_1b+\bar{a}||^2+k(1-u_1^2)-1,
        \quad\forall \,u_1\in(\alpha,\beta).
\end{split}
\end{equation*}
The equation \eqref{eq:linear-term-of-q(tilde{u})-vanishes} implies the first two equations
in \eqref{eq:final-cond-first-set}, while the equation above gives rest of the equations below,
\begin{align}
\label{eq:final-cond-first-set}
    0&=\bar{A}^Tb+a_{11}c, & 0&=\bar{A}^T\bar{a}+a_1c, & kI_{n-1}&=\bar{A}^T\bar{A}+cc^T,\\
\label{eq:final-cond-second-set}
    0&=a_{11}^2+||b||^2-k, & 0&=a_1a_{11}+\langle{b,\bar{a}}\rangle, & 0&=a_1^2+||\bar{a}||^2+k-1.
\end{align}
We have, therefore,
\begin{equation*}
      A^TA
=     \begin{bmatrix}
         a_{11}^2+||b||^2     &a_{11}c^T+b^T\bar{A}\\
         a_{11}c+\bar{A}^Tb &\bar{A}^T\bar{A}+cc^T
      \end{bmatrix}
=     \begin{bmatrix}k &0\\0 &kI_{n-1}\end{bmatrix}
=     kI_n.
\end{equation*}
Since $|\det A|=1$ and $A^TA$ is positive semidefinite, we have $k=1$.
This proves that $A$ is an orthogonal matrix.  Furthermore,
the last equation in \eqref{eq:final-cond-second-set} gives $a=(a_1,\bar{a})=0$.

Let $x$ be in $\slab$. As $||Ax||=||x||\leq1$ and
$\langle{e_1,x}\rangle=\langle{Ae_1,Ax}\rangle$, we have
\[  \slab
=   A\slab
=   \{Ax: ||x||\leq1,\; \alpha
    \leq\langle{e_1,x}\rangle\leq\beta\}
=   \{x: ||x||\leq1,\; \alpha\leq\langle{Ae_1,x}\rangle
    \leq\beta\}.
\]
If $\alpha\neq-\beta$, then we must have $Ae_1=e_1$, and if $\alpha=-\beta$,
then $\slab=-\slab$ and we have $Ae_1=\pm e_1$.
Since $Ae_1=\begin{pmatrix}a_{11}\\b\end{pmatrix}$, we see that $|a_{11}|=1$ and
$b=c=0$.  It is then clear that $\bar{A}$ belongs to $O_{n-1}$.

Conversely, it is easy to verify that any matrix $A$ in the form above is in $\Aut(K)$.
\end{proof}

\begin{lemma}
\label{Lemma:invariance}
The extremal ellipsoids $\ce(\slab)$ and $\ie(\slab)$ have the form $E(X,c)$ where
$c=\tau e_1$ and $X=\diag(a,b,...,b)$ for some $a>0$, $b>0$ and $\tau$ in $\R$.
Moreover, if $\alpha=-\beta$, then $c=0$.
\end{lemma}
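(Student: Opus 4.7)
The plan is to combine Theorem~\ref{Theorem:invariance} with the explicit description of $\Aut(\slab)$ given in Lemma~\ref{Lemma:Aut(slab)}. Since $E = E(X,c)$ stands for either $\ce(\slab)$ or $\ie(\slab)$, Theorem~\ref{Theorem:invariance} tells us that every $A \in \Aut(\slab)$ is an automorphism of $E$ and fixes $c$. By Lemma~\ref{Lemma:Aut(slab)}, each such $A$ is a linear map of the block diagonal form $A = \diag(a_{11},\bar{A})$, with $a_{11} \in \{1\}$ (or $\{\pm 1\}$ if $\alpha=-\beta$) and $\bar{A} \in O_{n-1}$. I will deduce the form of $c$ from $Ac = c$, and the form of $X$ from $A^T X A = X$, which, thanks to Lemma~\ref{Lemma:Aut(E)}, is exactly the condition $A \in O(\R^n,X) = \Aut(E(X,0))$ that must hold once we know the linear map $A$ fixes $c$.

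For the center, write $c = (c_1,\bar{c})$ with $\bar{c} \in \R^{n-1}$. Fixing $a_{11}=1$, the condition $Ac = c$ reduces to $\bar{A}\bar{c} = \bar{c}$ for every $\bar{A} \in O_{n-1}$, and choosing $\bar{A}$ to act as $-I_{n-1}$ (or, in $n=2$, the one non-identity element of $O_1$) forces $\bar{c} = 0$. Hence $c = c_1 e_1 =: \tau e_1$. In the symmetric case $\alpha = -\beta$, the matrix $\diag(-1,I_{n-1})$ also belongs to $\Aut(\slab)$, and applying it to $c = \tau e_1$ yields $\tau = -\tau$, so $\tau = 0$.

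For the matrix, decompose $X$ in blocks conforming to $A$,
\begin{equation*}
X = \begin{pmatrix} x_{11} & v^T \\ v & \bar{X} \end{pmatrix},
\end{equation*}
with $v \in \R^{n-1}$ and $\bar{X} \in \Sym^{(n-1)\times(n-1)}$. Since $Ac = c$, the argument in the proof of Lemma~\ref{Lemma:Aut(E)} shows $A \in O(\R^n,X)$, i.e.\ $A^T X A = X$. The block product gives the three conditions $a_{11}^2 x_{11} = x_{11}$, $a_{11}\bar{A}^T v = v$, and $\bar{A}^T \bar{X}\bar{A} = \bar{X}$, required for every admissible $(a_{11},\bar{A})$. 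Taking $a_{11}=1$ and $\bar{A}$ arbitrary in $O_{n-1}$ forces $v = 0$ (by the same argument used for $\bar{c}$), and the identity $\bar{X}\bar{A} = \bar{A}\bar{X}$ for every $\bar{A} \in O_{n-1}$ forces $\bar{X} = b\,I_{n-1}$ for some scalar $b$. Positive definiteness of $X$ then gives $a := x_{11} > 0$ and $b > 0$, so $X = \diag(a,b,\ldots,b)$.

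The only step requiring a small additional remark is that commutation of $\bar{X}$ with the entire group $O_{n-1}$ implies $\bar{X}$ is a scalar matrix; this is standard (every eigenspace of $\bar{X}$ is $O_{n-1}$-invariant, and the action of $O_{n-1}$ on $\R^{n-1}$ leaves no nontrivial proper subspace invariant for $n-1 \geq 2$, while for $n-1=1$ the statement is vacuous). This is the only nontrivial ingredient; everything else is a direct exploitation of the group $\Aut(\slab)$ already computed in Lemma~\ref{Lemma:Aut(slab)}.
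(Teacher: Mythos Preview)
Your proof is correct and follows essentially the same approach as the paper: both invoke Theorem~\ref{Theorem:invariance} together with the description of $\Aut(\slab)$ from Lemma~\ref{Lemma:Aut(slab)}, then read off the form of $c$ from $Ac=c$ and the form of $X$ from $A^TXA=X$. The only cosmetic differences are that the paper uses $-I_n$ rather than $\diag(-1,I_{n-1})$ for the $\alpha=-\beta$ case, and it shows $\bar X$ is a scalar matrix by first diagonalizing and then applying permutation matrices, whereas you invoke the (equivalent) fact that a matrix commuting with all of $O_{n-1}$ must be scalar.
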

\begin{proof}
Since the proofs are the same, we only consider the ellipsoid $\ce(\slab)$.
Let $U=\begin{bmatrix}1&0\\0&\bar{U}\end{bmatrix}$ be in $\Aut(K)$. Write
$c=(c_1,\bar{c})$. It follows from Theorem~\ref{Theorem:invariance} that $Uc=c$.
This implies that $\bar{U}\bar{c}=\bar{c}$ for all $\bar{U}$ in $O_{n-1}$.
Choosing $\bar{U}=-I_{n-1}$, we obtain $\bar{c}=0$.  If $\alpha=-\beta$, then
choosing $U=-I_n$ in $O_n$ gives $c=0$.

Let $\ce(\slab)=T(B_n)=E(X,c)$ where $T(x)=c+X^{-1/2}x$.  Lemma~\ref{Lemma:Aut(E)} implies that
$U^TXU=X$, and writing $X=\begin{bmatrix}x_{11}&v^T\\v&\bar{X}\end{bmatrix}$, this equation
gives $\bar{U}^Tv=v$ and $\bar{U}^T\bar{X}\bar{U}=\bar{X}$ for all
$\bar{U}$ in $O_{n-1}$.  The first equation implies $v=0$. In the second equation, we can
choose $\bar{U}$ so that the left hand side is a diagonal matrix, proving that
$\bar{X}$ itself must be a diagonal matrix. If $\bar{U}$ is the permutation matrix
switching columns $i$ and $j$, then the equation $\bar{U}^T\bar{X}\bar{U}=\bar{X}$
gives $\bar{X}_{ii}=\bar{X}_{jj}$.  This proves that $X=\diag(a,b,b,\ldots,b)$
for some $a>0,\,b>0$.
\end{proof}


\section{Determination of the minimum volume circumscribed ellipsoid of a slab}
\label{sec:MVCE-of-a-slab}

In this section, we give explicit formulae for the minimum volume circumscribed
ellipsoid of the slab $\slab$ in \eqref{eq:slab} using the Fritz John optimality
conditions \eqref{eq:FJ-for-MVCE-general} and Lemma~\ref{Lemma:invariance}.
In this section, $K$ will always denote the convex body $\slab$.

The following theorem is one of our main results in this paper.

\begin{theorem}
\label{Theorem:optimal-solution-for-slab-MVCE-problem}
The minimum volume circumscribed ellipsoid $\ce(\slab)$ has the form $E(X,c)$ where
$c=\tau e_1$ and $X=\emph{diag}(a,b,\ldots,b)$, where the
parameters $a>0$, $b>0$, and $\alpha<\tau<\beta$ are given as follows:\\
(i) If $\alpha\beta\,\leq\,-1/n$, then
\begin{equation}
    \label{eq:Solution-i}
    \tau=0,\,\text{ and }\,a=b=1.
\end{equation}
(ii) If $\alpha+\beta\,=\,0$ and $\alpha\beta\,>\,-1/n$, then
\begin{equation}
    \label{eq:Solution-ii}
    \tau=0,\quad
    a=\frac{1}{n\beta^2},\quad
    b=\frac{n-1}{n(1-\beta^2)}.
\end{equation}
(iii) If $\alpha+\beta\,\neq0$ and $\alpha\beta\,>\,-1/n$, then
\begin{equation}
\label{eq:Solution-iii}
\begin{split}
    \tau
&=  \frac{n(\beta+\alpha)^2+2(1+\alpha\beta)-
    \sqrt{\Delta}}{2(n+1)(\beta+\alpha)},\\
    a
&=  \frac{1}{n(\tau-\alpha)(\beta-\tau)},\quad
    b
=   \frac{1-a(\tau-\alpha)^2}{1-\alpha^2},
\end{split}
\end{equation}
where $\Delta=n^2(\beta^2-\alpha^2)^2+4(1-\alpha^2)(1-\beta^2)$.
\end{theorem}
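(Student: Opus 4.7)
The plan is to combine the structural reduction of Lemma~\ref{Lemma:invariance} with the Fritz John optimality conditions \eqref{eq:FJ-for-MVCE-general}, using the contact-point simplification of Remark~\ref{Remark:contact-points-of-MVCE-2}. By Lemma~\ref{Lemma:invariance}, I write $\ce(\slab) = E(X,c)$ with $c = \tau e_1$ and $X = \diag(a,b,\ldots,b)$, reducing the problem to the three scalars $\tau, a, b$. By Remark~\ref{Remark:contact-points-of-MVCE-2}, the contact points may be chosen in $\ext(\slab) = \{u : \|u\| = 1,\ \alpha \le u_1 \le \beta\}$. For such a unit vector, membership in $\partial E(X,c)$ is the scalar condition
\[
 f(u_1) \;:=\; a(u_1-\tau)^2 + b(1-u_1^2) \;=\; 1,
\]
which depends only on $u_1$. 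Hence the contact points lie on $(n-2)$-spheres (``rings'') at specific values $u_1 = t_k \in [\alpha, \beta]$.

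Next I would exploit the rotational invariance in the last $n-1$ coordinates to aggregate the multipliers on each ring into a single mass $\mu_k > 0$ at $(t_k, 0, \ldots, 0)$. Averaging the two identities in \eqref{eq:FJ-for-MVCE-general} over each ring and using \eqref{eq:sum-of-lambdas-is-n} (after the usual affine normalization), the Fritz John system collapses to
\begin{align*}
  \sum_k \mu_k &= n, \qquad \sum_k \mu_k(t_k - \tau) = 0, \\
  \sum_k \mu_k(t_k - \tau)^2 &= 1/a, \qquad \sum_k \mu_k(1 - t_k^2) = (n-1)/b,
\end{align*}
together with $f(t_k) = 1$ for every $k$ and the feasibility $\max_{t \in [\alpha,\beta]} f(t) \le 1$. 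Since $f$ is a polynomial of degree at most two in $u_1$, the equation $f = 1$ has at most two roots in $[\alpha, \beta]$ unless $f \equiv 1$, which forces $a = b = 1$ and $\tau = 0$. A single contact ring is infeasible because the centering equation would force $t_1 = \tau$ and hence $1/a = 0$.

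The proof then splits into cases. When $(a, b, \tau) = (1, 1, 0)$ is a valid candidate, the remaining Fritz John system asks for a positive measure on $[\alpha, \beta]$ of total mass $n$, first moment $0$, and second moment $1$; an elementary moment calculation shows that the maximum attainable second moment (realized by a two-point measure at $\alpha, \beta$) equals $-n\alpha\beta$, so feasibility is equivalent to $\alpha\beta \le -1/n$, producing part~(i). In all other cases $f$ is a non-constant quadratic, and since the concave case ($a < b$) admits only one maximum on $[\alpha, \beta]$ and hence only one ring, we must have $a > b$ with the two rings sitting at $t_1 = \alpha$ and $t_2 = \beta$. Solving the linear part of the system gives $\mu_1 = n(\beta - \tau)/(\beta - \alpha)$ and $\mu_2 = n(\tau - \alpha)/(\beta - \alpha)$, the formula $a = 1/[n(\tau - \alpha)(\beta - \tau)]$, and $b = (1 - a(\tau-\alpha)^2)/(1-\alpha^2)$ from $f(\alpha) = 1$. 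The equality $f(\alpha) = f(\beta) = 1$ then yields the single scalar relation $a(\alpha + \beta - 2\tau) = b(\alpha + \beta)$. When $\alpha + \beta = 0$ this forces $\tau = 0$ and the closed-form expressions of part~(ii) drop out; when $\alpha + \beta \ne 0$, substituting the formulas for $a, b$ and clearing denominators produces a quadratic in $\tau$ whose discriminant simplifies, after a (tedious but routine) computation in $s = \alpha + \beta$ and $p = \alpha\beta$, to the stated $\Delta$. I expect the main obstacle to be precisely this last piece of algebra: carrying out the simplification to obtain the quadratic in $\tau$, identifying its discriminant with $\Delta$, and justifying the ``$-$'' sign (which, under the standing assumption $\beta^2 \ge \alpha^2$ forcing $\alpha + \beta > 0$, is what puts the root in the open interval $(\alpha, \beta)$). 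Once this is done, the conditions $\alpha < \tau < \beta$ (which secures $\mu_1, \mu_2 > 0$) and $a > b$ (which makes $f \le 1$ automatic from $f(\alpha) = f(\beta) = 1$) must be verified to invoke the sufficiency half of Theorem~\ref{Theorem:MVCE-uniqueness-and-sufficiency}.
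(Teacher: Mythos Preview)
Your proposal is correct and follows essentially the same route as the paper: reduce to the three scalars via Lemma~\ref{Lemma:invariance}, write the Fritz John conditions in terms of the quadratic $f(u_1)=a(u_1-\tau)^2+b(1-u_1^2)$, split into the cases $a=b$ (your $f\equiv1$, handled by the moment inequality $\sum\lambda_i(y_i-\alpha)(y_i-\beta)\le0$, which is exactly your second--moment bound) and $a\neq b$ (forcing $a>b$ and contact values $\{\alpha,\beta\}$), and then solve the resulting system to obtain the quadratic \eqref{eq:quadratic-equation-for-tau} for $\tau$ with discriminant $\Delta$. The only cosmetic differences are your explicit aggregation of multipliers into ring masses $\mu_k$ and your plan to verify sufficiency directly, whereas the paper works with the unaggregated sums and closes the argument by appealing to uniqueness of $\ce(\slab)$ rather than rechecking sufficiency.
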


We remark that Corollary~\ref{Corollary:diameter-breadth} also implies
the converse of Part (i) in the theorem.

\begin{proof}
Lemma~\ref{Lemma:invariance} implies that $X=diag(a,b,...,b)$ and $c=\tau e_1$.
Writing $u_i=(y_i,z_i)\in\R\times\R^{n-1}$, $i=1,\ldots,k$, where
$1=||u_i||^2=y_i^2+||z_i||^2$, that is $||z_i||^2=1-y_i^2$, and noting that
$  (u_i-c)(u_i-c)^T
=  \begin{bmatrix}
      (y_i-\tau)^2  &(y_i-\tau)z_i^T\\
      (y_i-\tau)z_i &z_iz_i^T
   \end{bmatrix}$,
the Fritz John (necessary and sufficient) optimality conditions
\eqref{eq:FJ-for-MVCE-general} may be written in the form
\begin{eqnarray}
           \tau
&=&\
           \frac{1}{n}\sum_{i=1}^k\lambda_iy_i,\quad
           0
=
           \sum_{i=1}^k\lambda_iz_i,\quad
           \sum_{i=1}^k\lambda_i
=          n,\label{eq:FJ-for-slab-MVCE-1} \\
           \frac{1}{a}
&=&\
           \sum_{i=1}^k\lambda_i(y_i-\tau)^2,\quad
           0
=
           \sum_{i=1}^k\lambda_i(y_i-\tau)z_i,\quad
           \frac{1}{b}I_{n-1}
=          \sum_{i=1}^k\lambda_iz_iz_i^T,
           \label{eq:FJ-for-slab-MVCE-2}\\
           0
&=&\
           a(y_i-\tau)^2+b(1-y_i^2)-1,\quad i=1,\ldots,k,
           \label{eq:FJ-for-slab-MVCE-3}\\
           0
&\geq&\    a(y-\tau)^2+b(1-y^2)-1,\quad
            \forall\, y\in[\alpha,\beta].
           \label{eq:FJ-for-slab-MVCE-4}
\end{eqnarray}
The last line expresses the feasibility condition $K\subseteq E(X,c)$: any point
$x=(y,z)$ satisfying $\alpha\leq y\leq\beta$ and $||x||=1$ lies in $K$, hence in
$E(X,c)$, so that it satisfies the conditions $y^2+||z||^2=1$ and
$a(y-\tau)^2+b||z||^2\leq1$.

The conditions \eqref{eq:FJ-for-slab-MVCE-1}--\eqref{eq:FJ-for-slab-MVCE-4} thus
characterize the ellipsoid $\ce(K)$ for $K=\slab$ in \eqref{eq:slab}.  Since the
ellipsoid $\ce(K)$ is unique, its paremeters $(\tau,a,b)$ are unique and can be
recovered from the above conditions.  These are done in the technical lemmas below.
\end{proof}

\begin{lemma}
\label{Lemma:a=b}
If $a=b$ in the ellipsoid $\ce(\slab)$, then $\tau=0$, $a=b=1$,
and $\alpha\beta\leq-1/n$.
\end{lemma}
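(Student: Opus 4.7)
The plan is to combine the hypothesis $a=b$ with the Fritz John conditions \eqref{eq:FJ-for-slab-MVCE-1}--\eqref{eq:FJ-for-slab-MVCE-4} and proceed in two stages.

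In the first stage, I would show that $\tau=0$ and hence $a=b=1$. Substituting $a=b$ into the contact-point equality \eqref{eq:FJ-for-slab-MVCE-3} collapses it to $a(1+\tau^2-2\tau y_i)=1$, which is linear in $y_i$. If $\tau\neq 0$, this forces every contact point to share a common first coordinate $y^\ast$; combining with $\tau=n^{-1}\sum_i\lambda_i y_i$ from \eqref{eq:FJ-for-slab-MVCE-1} gives $y^\ast=\tau$, whence $a(1-\tau^2)=1$. Independently, taking the trace of the matrix identity $\tfrac{1}{b}I_{n-1}=\sum_i\lambda_i z_iz_i^T$ in \eqref{eq:FJ-for-slab-MVCE-2} and using $\|z_i\|^2=1-y_i^2=1-\tau^2$ together with $\sum_i\lambda_i=n$ yields $a=(n-1)/[n(1-\tau^2)]$. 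Equating the two expressions would force $n=n-1$, a contradiction. Therefore $\tau=0$, and then $a(1+\tau^2)=1$ drops out of the collapsed contact equation, giving $a=b=1$ directly.

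In the second stage, I would deduce $\alpha\beta\leq -1/n$ from the resulting conditions $\sum_i\lambda_i y_i=0$, $\sum_i\lambda_i y_i^2=1/a=1$, and $\sum_i\lambda_i=n$, all with $y_i\in[\alpha,\beta]$ and $\lambda_i>0$. The decisive step is the pointwise inequality $(y_i-\alpha)(y_i-\beta)\leq 0$, which expands to $y_i^2-(\alpha+\beta)y_i+\alpha\beta\leq 0$. Multiplying by $\lambda_i$, summing over $i$, and invoking the three moment identities collapses the cross term and gives $1+n\alpha\beta\leq 0$, i.e.\ $\alpha\beta\leq -1/n$, as claimed.

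The main obstacle is the bookkeeping in the first stage: one has to notice that after setting $a=b$, the contact equality becomes linear in $y_i$ and therefore pins every contact point to a common $y$-coordinate unless $\tau=0$, and then extract two incompatible expressions for $a$ by independently using the scalar identity $\tfrac1a=\sum_i\lambda_i(y_i-\tau)^2$ (which becomes trivial once all $y_i=\tau$) and the trace of the matrix identity for $z_iz_i^T$. Once $\tau=0$ and $a=b=1$ are in hand, the second stage is just the elementary quadratic trick $(y_i-\alpha)(y_i-\beta)\leq 0$ combined with the moment identities, with no further analysis required.
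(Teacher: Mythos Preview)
Your proposal is correct and follows essentially the same approach as the paper: collapse the contact equation \eqref{eq:FJ-for-slab-MVCE-3} under $a=b$ to a linear relation in $y_i$, force all $y_i$ to coincide (and equal $\tau$) if $\tau\neq0$, derive a contradiction, then conclude $\tau=0$, $a=b=1$, and finish with the quadratic trick $\sum_i\lambda_i(y_i-\alpha)(y_i-\beta)\le0$. The only difference is in the contradiction step: the paper observes directly that $y_i=\tau$ makes the first equation in \eqref{eq:FJ-for-slab-MVCE-2} read $1/a=\sum_i\lambda_i(y_i-\tau)^2=0$, whereas you take a slightly longer detour comparing $a(1-\tau^2)=1$ with the trace identity $a=(n-1)/[n(1-\tau^2)]$; both are valid, but the paper's route is a line shorter.
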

\begin{proof}
Since $a=b$, \eqref{eq:FJ-for-slab-MVCE-3} gives the equation
$2a\tau y_i=a\tau^2+a-1$. We have $\tau=0$, since otherwise all $y_i$
are the same, and the first and third equations in \eqref{eq:FJ-for-slab-MVCE-1}
imply that $y_i=\tau$, contradicting the first equation in
\eqref{eq:FJ-for-slab-MVCE-2}. The equation $2a\tau y_i=a\tau^2+a-1$
reduces to $a=1=b$. Finally, since $\alpha\leq y_i\leq\beta$, we obtain
\begin{equation*}
      0
\geq
      \sum_{i=1}^k\lambda_i(y_i-\alpha)(y_i-\beta)
=
      \sum_{i=1}^k\lambda_iy_i^2
       -(\alpha+\beta)\sum_{i=1}^k\lambda_iy_i
        +\alpha\beta\sum_{i=1}^k\lambda_i
=     1+n\alpha\beta,
\end{equation*}
where the last equation follows since $\sum_{i=1}^k\lambda_iy_i^2=1$
from the first equation in \eqref{eq:FJ-for-slab-MVCE-2} and
$\sum_{i=1}^k\lambda_iy_i=0$, $\sum_{i=1}^k\lambda_i=n$ from
\eqref{eq:FJ-for-slab-MVCE-1}.
\end{proof}

\begin{lemma}
\label{Lemma:a-neq-b-implies-u-equals-a-or-b}
If $a\not=b$ in the ellipsoid $\ce(\slab)$, then $a>b$ and the leading coordinate $y_i$
of a contact point must be $\alpha$ or $\beta$, and both values are taken.
\end{lemma}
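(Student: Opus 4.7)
The plan is to reformulate the Fritz John conditions \eqref{eq:FJ-for-slab-MVCE-3}--\eqref{eq:FJ-for-slab-MVCE-4} as statements about the univariate quadratic
\[
   \phi(y)
:= a(y-\tau)^2+b(1-y^2)-1
 = (a-b)y^2-2a\tau y+(a\tau^2+b-1),
\]
which is a genuine quadratic because $a\neq b$. Condition \eqref{eq:FJ-for-slab-MVCE-4} reads $\phi\le 0$ on $[\alpha,\beta]$, while \eqref{eq:FJ-for-slab-MVCE-3} says $\phi(y_i)=0$ at every contact coordinate $y_i\in[\alpha,\beta]$. Thus each $y_i$ is a zero of $\phi$ which simultaneously maximizes $\phi$ on $[\alpha,\beta]$. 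The whole proof is really a case analysis of where the (at most two) roots of $\phi$ lie relative to $[\alpha,\beta]$ depending on the sign of $a-b$.

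First I would rule out $a<b$. In that case $\phi$ is strictly concave, so the set $\{\phi\le 0\}$ is the complement of an open interval and $\phi$ has at most two real roots. Combined with $\phi\le 0$ on $[\alpha,\beta]$ and $\phi(y_i)=0$ for some $y_i\in[\alpha,\beta]$, the zero set $\{\phi=0\}\cap[\alpha,\beta]$ must collapse to a single point $y^*$, forcing every contact coordinate to equal $y^*$. But then \eqref{eq:FJ-for-slab-MVCE-1} gives $\tau=\frac{1}{n}\sum_i\lambda_i y^*=y^*$, and substituting into the first equation of \eqref{eq:FJ-for-slab-MVCE-2} yields $1/a=\sum_i\lambda_i(y_i-\tau)^2=0$, contradicting $a>0$. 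Hence $a>b$.

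With $a>b$, $\phi$ is strictly convex, so the feasibility condition $\phi\le 0$ on $[\alpha,\beta]$ forces $\phi$ to have two distinct real roots $r_1<r_2$ with $r_1\le\alpha<\beta\le r_2$ (a convex quadratic is $\le 0$ precisely on the closed interval between its roots, and the case of a double root would force $\alpha=\beta$). Every contact coordinate satisfies $y_i\in\{r_1,r_2\}\cap[\alpha,\beta]$, which together with $r_1\le\alpha$ and $r_2\ge\beta$ forces $y_i=\alpha$ (only possible if $r_1=\alpha$) or $y_i=\beta$ (only possible if $r_2=\beta$). Finally, if only one of the two endpoints were taken, the same computation as before ($\tau$ equals that endpoint, then $1/a=0$) gives a contradiction, so both $\alpha$ and $\beta$ must occur among the $y_i$.

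I do not expect a real obstacle here, because once one notices that $\phi$ is a \emph{genuine} quadratic (thanks to $a\neq b$), everything reduces to elementary facts about the roots of a convex/concave quadratic relative to a subinterval of $\R$. The only place requiring care is the ``collapse to a single value'' step in the concave case, and the closing argument that both $\alpha$ and $\beta$ are taken; both are handled by the same short observation that if all $y_i$ agreed with a common value then \eqref{eq:FJ-for-slab-MVCE-1} would force $\tau$ to equal that value and then \eqref{eq:FJ-for-slab-MVCE-2} would give $1/a=0$.
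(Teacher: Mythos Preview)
Your argument is correct and is essentially the paper's own proof: both introduce the same quadratic $g(y)=a(y-\tau)^2+b(1-y^2)-1$, use the identical ``if all $y_i$ coincide then $\tau=y_i$ and $1/a=0$'' contradiction (from \eqref{eq:FJ-for-slab-MVCE-1}--\eqref{eq:FJ-for-slab-MVCE-2}), and combine it with the elementary geometry of the two roots of $g$ relative to $[\alpha,\beta]$. The only difference is the order of the deductions---you rule out $a<b$ first and then locate the roots, while the paper first shows the $y_i$ take two values at the endpoints and reads off $a>b$ from the resulting convexity of $g$---but the content is the same.
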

\begin{proof}
Observe that the function $g(y):=a(y-\tau)^2+b(1-y^2)-1$ in
\eqref{eq:FJ-for-slab-MVCE-4} is nonpositive on the interval $I=[\alpha,\beta]$
and equals zero at each $y_i$. We claim that $y_i$ can not take a single value:
otherwise the first and third equations in \eqref{eq:FJ-for-slab-MVCE-1}
imply that $y_i=\tau$, contradicting the first equation in \eqref{eq:FJ-for-slab-MVCE-2}.
(This result also follows from
Corollary~\ref{Corollary:contact-points-of-MVCE-cannot-lie-in-any-halfspace}.)
Since $g$ is a quadratic function, $y_i$ must take exactly two values, and
\eqref{eq:FJ-for-slab-MVCE-4} implies that these two values must coincide with the
endpoints of the interval $I$. Furthermore, $g(y)\leq0$ only on $I$, has a global
minimizer there, and so it must be a strictly convex function.  This proves that $a>b$.
\end{proof}

\begin{lemma}
\label{Lemma:a-neq-b}
If $a\not=b$ in the ellipsoid $\ce(\slab)$, then $(\tau,a,b)$ are given by the equations
\eqref{eq:Solution-ii}  and \eqref{eq:Solution-iii}. Moreover, $\alpha\beta>-1/n$.
\end{lemma}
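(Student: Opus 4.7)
The plan is to exploit Lemma~\ref{Lemma:a-neq-b-implies-u-equals-a-or-b} to reduce the Fritz John system \eqref{eq:FJ-for-slab-MVCE-1}--\eqref{eq:FJ-for-slab-MVCE-4} to three unknowns $\tau,a,b$ and solve explicitly. By that lemma, every contact point $u_i=(y_i,z_i)$ satisfies $y_i\in\{\alpha,\beta\}$ with both values attained, and $a>b$. Partition the contact indices by leading coordinate and set $\mu=\sum_{y_i=\alpha}\lambda_i$, $\nu=\sum_{y_i=\beta}\lambda_i$. The scalar parts of \eqref{eq:FJ-for-slab-MVCE-1} give $\mu+\nu=n$ and $\mu\alpha+\nu\beta=n\tau$, forcing
\[
\mu=\frac{n(\beta-\tau)}{\beta-\alpha},\qquad \nu=\frac{n(\tau-\alpha)}{\beta-\alpha}.
\]
Substituting into the scalar equation of \eqref{eq:FJ-for-slab-MVCE-2} collapses the cross term and gives $1/a=n(\tau-\alpha)(\beta-\tau)$. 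Taking the trace of the $(n-1)\times(n-1)$ matrix equation in \eqref{eq:FJ-for-slab-MVCE-2}, together with $\|z_i\|^2=1-y_i^2$, yields $(n-1)/b=n\bigl[\,1+\alpha\beta-\tau(\alpha+\beta)\,\bigr]$. Thus $a$ and $b$ are completely determined by $\tau$.

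Next I would impose the active constraint \eqref{eq:FJ-for-slab-MVCE-3} at $y_i=\alpha$ and at $y_i=\beta$. Subtracting the two and dividing by $\alpha-\beta\neq 0$ produces the single scalar relation $a(\alpha+\beta-2\tau)=b(\alpha+\beta)$. If $\alpha+\beta=0$ this forces $\tau=0$ (since $a>0$), and the moment formulas above immediately give \eqref{eq:Solution-ii}. If $\alpha+\beta\neq 0$, substitute the formulas for $a,b$ and clear denominators; a routine computation produces the quadratic
\[
(n+1)(\alpha+\beta)\tau^2-\bigl[n(\alpha+\beta)^2+2(1+\alpha\beta)\bigr]\tau+(\alpha+\beta)(1+n\alpha\beta)=0.
\]
Using the identities $(\alpha+\beta)^2-4\alpha\beta=(\alpha-\beta)^2$ and $(1+\alpha\beta)^2-(\alpha+\beta)^2=(1-\alpha^2)(1-\beta^2)$, the discriminant simplifies to $\Delta=n^2(\beta^2-\alpha^2)^2+4(1-\alpha^2)(1-\beta^2)$. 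The minus-sign root is the one lying in $(\alpha,\beta)$: the other root may be excluded by an endpoint check (evaluating the quadratic at $\tau=\alpha$ and $\tau=\beta$ and invoking the normalization $\beta^2\geq\alpha^2$ to pin down the sign). Plugging this $\tau$ into the moment formulas reproduces \eqref{eq:Solution-iii}.

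The final claim $\alpha\beta>-1/n$ I would obtain by proving the contrapositive. Assume $\alpha\beta\leq -1/n$; then $\alpha<0<\beta$. I would explicitly verify the sufficient Fritz John conditions \eqref{eq:FJ-for-MVCE} for the unit ball $E(I_n,0)$ using the symmetric family $\{(\alpha,\pm\sqrt{1-\alpha^2}e_j),(\beta,\pm\sqrt{1-\beta^2}e_j):j=2,\dots,n\}$ with per--point weights $\mu',\nu'$ fixed by $\mu'\alpha+\nu'\beta=0$ and $\mu'\alpha^2+\nu'\beta^2=1/[2(n-1)]$, together with the $2(n-1)$ auxiliary points $(0,\pm e_j)$ of common weight $\rho=(1+n\alpha\beta)/[2(n-1)\alpha\beta]$. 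Since $\alpha\beta<0$, the hypothesis $\alpha\beta\leq -1/n$ is exactly the condition $\rho\geq 0$; a direct check then shows that all relations in \eqref{eq:FJ-for-MVCE} are satisfied, and by the uniqueness half of Theorem~\ref{Theorem:MVCE-uniqueness-and-sufficiency} we conclude $\ce(\slab)=B_n$, so $a=b=1$, contradicting $a\neq b$. The main obstacle in this plan is the discriminant simplification and endpoint analysis in Case (iii); choosing the right Fritz John witness at the end is routine once the construction above is written down, and the rest is linear bookkeeping.
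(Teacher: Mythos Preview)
Your derivation of the formulas for $(\tau,a,b)$ is correct and follows the paper's line closely: both of you invoke Lemma~\ref{Lemma:a-neq-b-implies-u-equals-a-or-b}, reduce the Fritz John system to scalar relations, and obtain the same quadratic \eqref{eq:quadratic-equation-for-tau} for $\tau$. The only cosmetic differences are that the paper packages the moment computations via the identity $\sum_i\lambda_i(y_i-\alpha)(y_i-\beta)=0$ rather than your explicit $\mu,\nu$ split, and that the paper records $b$ via the active constraint $b=(1-a(\tau-\alpha)^2)/(1-\alpha^2)$ rather than your trace formula $(n-1)/b=n[1+\alpha\beta-\tau(\alpha+\beta)]$; these are equivalent once the other relations are in hand. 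Your endpoint check for root selection parallels the paper's verification that the plus-sign root satisfies $\bar\tau\geq\beta$.

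Where you genuinely diverge is in proving $\alpha\beta>-1/n$. The paper obtains this directly from the inequality $a>b$ (furnished by Lemma~\ref{Lemma:a-neq-b-implies-u-equals-a-or-b}): in the symmetric case $\alpha+\beta=0$ the explicit formulas immediately give $a>b\Leftrightarrow 1+n\alpha\beta>0$, and in the asymmetric case $a>b$ forces $\tau>0$, whence an algebraic identity $\bigl(n(\alpha+\beta)^2+2(1+\alpha\beta)\bigr)^2-\Delta=4(n+1)(\alpha+\beta)^2(1+n\alpha\beta)$ yields the conclusion. Your contrapositive---building an explicit Fritz John decomposition of $I_n$ for the unit ball when $\alpha\beta\leq-1/n$ and invoking Theorem~\ref{Theorem:MVCE-uniqueness-and-sufficiency}---is correct (indeed it independently proves the converse of part~(i) of Theorem~\ref{Theorem:optimal-solution-for-slab-MVCE-problem}), but it is heavier machinery for this lemma: you are re-establishing a special case of Lemma~\ref{Lemma:a=b} from scratch, when the shorter route is simply to exploit $a>b$, which you already have in hand.
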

\begin{proof}
Lemma~\ref{Lemma:a-neq-b-implies-u-equals-a-or-b} and equation
\eqref{eq:FJ-for-slab-MVCE-3} give $a(\beta-\tau)^2+b(1-\beta^2)=1$  and
$a(\alpha-\tau)^2+b(1-\alpha^2)=1$. Subtracting the second equation from the first
and dividing by $\beta-\alpha\neq0$ yields the equation $\tau=(1-b/a)(\alpha+\beta)/2$.
We also have
\begin{equation*}
\begin{split}
      0
&=    \sum_{i=1}^k\lambda_i(y_i-\alpha)(y_i-\beta)
=     \sum_{i=1}^k\lambda_i[(y_i-\tau)-(\alpha-\tau)]\cdot[(y_i-\tau)-(\beta-\tau)]\\
&=    \sum_{i=1}^k\lambda_i(y_i-\tau)^2-(\alpha+\beta-2\tau)\sum_{i=1}^k\lambda_i(y_i-\tau)
        +(\alpha-\tau)(\beta-\tau)\sum_{i=1}^k\lambda_i\\
&=    \frac{1}{a}+n(\alpha-\tau)(\beta-\tau),
\end{split}
\end{equation*}
where the first equation follows from Lemma~\ref{Lemma:a-neq-b-implies-u-equals-a-or-b}, the
last equation from \eqref{eq:FJ-for-slab-MVCE-1} and \eqref{eq:FJ-for-slab-MVCE-2}.

Altogether, we have the equations
\begin{equation}
\label{eq:a-b-tau}
    1
=  a(\alpha-\tau)^2+b(1-\alpha^2),\quad
    \tau
=   \left(1-\frac{b}{a}\right)\cdot\frac{\alpha+\beta}{2},\quad
    \frac{1}{a}
=  n(\tau-\alpha)(\beta-\tau),
\end{equation}
which we use to compute the variables $a$, $b$, and $\tau$.

If $\alpha=-\beta$, then the second equation above gives $\tau=0$.
Then the third and first equations in \eqref{eq:a-b-tau} give
$a=1/(n\alpha^2)$ and $b=(n-1)/(n(1-\alpha^2))$, respectively. Lastly,
Lemma~\ref{Lemma:a-neq-b-implies-u-equals-a-or-b} gives $a>b$,
and this implies $1+n\alpha\beta>0$.

If $\beta\neq-\alpha$, then the first and
third equations in \eqref{eq:a-b-tau}
give
$(\alpha-\tau)^2+(b/a)(1-\alpha^2)=n(\tau-\alpha-)(\beta-\tau)$
and the second equation gives $b/a=1-2\tau/(\alpha+\beta)$.
Substituting this value of $b/a$ in the preceding one
leads to the quadratic equality for $\tau$,
\begin{equation}
\label{eq:quadratic-equation-for-tau}
    (n+1)(\alpha+\beta)\tau^2
     -\left(n(\alpha+\beta)^2+2(1+\alpha\beta)\right)\tau
      +(\alpha+\beta)(1+n\alpha\beta)=0.
\end{equation}
A straightforward but tedious calculation shows that
the discriminant is
$\Delta=n^2(\beta^2-\alpha^2)^2+4(1-\beta^2)(1-\alpha^2)>0$.
We claim that the feasible root is the one with negative discriminant.
If $\overline{\tau}$ is the root with positive discriminant, then
$   \overline{\tau}-\beta
=   [n(\alpha^2-\beta^2)+2(1-\beta^2)+
    \sqrt{\Delta}]/(2(n+1)(\alpha+\beta))\geq0$.
Recalling that $\beta^2\geq\alpha^2$, we have
$[n(\alpha^2-\beta^2)+2(1-\beta^2)]^2-\Delta
=4(n+1)(1-\beta^2)(\alpha^2-\beta^2)\leq0$.
This gives $\overline{\tau}\geq\beta$,
proving the claim, as we must have
$\alpha<\tau<\beta$.  Therefore,
\begin{equation*}
\begin{split}
     \tau
&=   \frac{n(\alpha+\beta)^2+2(1+\alpha\beta)-\sqrt{\Delta}}
       {2(n+1)(\alpha+\beta)},\\
     a
&=   \frac{1}{n(\tau-\alpha)(\beta-\tau)},\quad
     b
=    \frac{1-a(\alpha-\tau)^2}{1-\alpha^2},
\end{split}
\end{equation*}
where the equations for $a$ and $b$ follow from the first and second
equations in \eqref{eq:a-b-tau}.

Finally, $\tau=(1-b/a)(\alpha+\beta)/2$ from \eqref{eq:a-b-tau} and
Lemma~\ref{Lemma:a-neq-b-implies-u-equals-a-or-b} gives $a>b$, implying
$\tau>0$.  From the formula above for $\tau$, we get
\[
    0
<
    (n(\alpha+\beta)^2+2(1+n\alpha\beta))^2-\Delta
=
    4(n+1)(\alpha+\beta)^2(1+n\alpha\beta).
\]
This gives $1+n\alpha\beta>0$. The lemma is proved.
\end{proof}

\begin{remark}
\label{Remark:concentration-of-measure}
Some of the results contained in Theorem~\ref{Theorem:optimal-solution-for-slab-MVCE-problem}
may seem very counter--intuitive.  For example, consider the slab $\slab$ when
$-\alpha=\beta=1/\sqrt{n}$.  Although the width of this slab is $2/\sqrt{n}$, very small
for large $n$, the optimal covering ellipsoid is the unit ball. This seemingly improbable
behavior may be explained by the \emph{concentration of measure} phenomenon: most of the
volume of a high dimensional ball is concentrated in a thin strip around the ``equator'',
see for example \cite{Ball97}. There is a sizable literature on concentration of measure;
the interested reader may consult the reference \cite{Ledoux01}  for more
information on this important topic.
\end{remark}


\subsection{Determination of the covering ellipsoid by nonlinear programming}
\label{subsec:NLP-min}

In this section, we give a proof of
Theorem~\ref{Theorem:optimal-solution-for-slab-MVCE-problem} which is completely
independent of the previous one based on semi--infinite programming.
The proof uses the uniqueness of the covering ellipsoid, Lemma~\ref{Lemma:invariance}
on the form of the optimal ellipsoid, and
Corollary~\ref{Corollary:contact-points-of-MVCE-cannot-lie-in-any-halfspace}.
We thus need proofs of the first and the last results that do not depend on the results of
\S\ref{sec:MVCE} and Theorem~\ref{Theorem:optimal-solution-for-slab-MVCE-problem}.
We note that an elementary and direct proof of the uniqueness of the optimal covering
ellipsoid can be found, for example, in Danzer et al.~\cite{DanzerLaugwitzLenz57},
and we supply an independent, direct proof of
Corollary~\ref{Corollary:contact-points-of-MVCE-cannot-lie-in-any-halfspace}.
This last result is not strictly necessary, but it simplifies our proofs, and
it may be of independent interest.

Recall that Corollary~\ref{Corollary:contact-points-of-MVCE-cannot-lie-in-any-halfspace}
states that the contact points of an extremal covering ellipsoid cannot lie any half space
whose bounding hyperplane passes through the center of the ellipsoid. The following is an
independent proof of this fact, in the spirit of the proof in Grunbaum~\cite{Grunbaum64}
for the maximum volume inscribed ellipsoid.

\begin{proof}
We assume without loss of generality that the ellipsoid is the unit ball $B_n$.
Clearly, it suffices to show that the open halfspace $B^+:=\{x: x_n>0\}$
contains a point of $E\cap\partial K$.  We prove this by contradiction.

Consider the ellipsoids
$E(\lambda)=\{x:\, f(x)=a\sum_{i=1}^{n-1}x_i^2+b(x_n+\lambda)^2\leq1\}$,
having on their boundary the points $\{e_i\}_1^{n-1}$ and $-e_n=(0,0,\ldots,0,-1)$.
We have $b=1/(1-\lambda)^2$, $a=1-\lambda^2/(1-\lambda)^2=(1-2\lambda)/(1-\lambda)^2$,
and
\[   \vol(E(\lambda))
=    \left(a^{n-1}b\right)^{-1}
=    \frac{(1-\lambda)^{2n}}{(1-2\lambda)^{n-1}}.
\]
We claim that $K\subseteq E(\lambda)$ for small enough $\lambda>0$.
On the one hand, if $||x||=1$ and $x_n\leq0$, we have
$f(x)-1=\frac{2\lambda}{(1-\lambda)^2}(x_n^2+x_n)\leq0$. On the other hand,
since $B^+$ contains no contact points, there exists $\epsilon>0$
such $||x||<1-\epsilon$ for all $x\in K\cap B^+$. It follows by continuity that
$K\cap B^+\subset E(\lambda)$ for small enough $\lambda>0$. These prove the claim.

Lastly, $\vol(E(\lambda))<1$, since $(1-\lambda)^{2n}-(1-2\lambda)^{n-1}
=[1-2n\lambda+o(\lambda)]-[1-(n-1)(-2\lambda)+o(\lambda)]
=-2\lambda+o(\lambda)<0$ for small $\lambda>0$.
\end{proof}

\begin{theorem}
\label{Theorem:MVCE-for-slab-using-NLP}
The minimum volume covering ellipsoid problem for the slab $\slab$ can be formulated
as the nonlinear programming problem
\begin{equation}
\label{problem:min-NLP-for-MVCE-of-slab}
\begin{split}
  \min\quad  &-\text{ln }a-(n-1)\text{ln }b,\\
  \st\quad   &a\tau-(\frac{\alpha+\beta}{2})(a-b)=0,\\
             &a\tau^2+b-1-\alpha\beta(a-b)=0,\\
             &-a+b\leq0,
\end{split}
\end{equation}
whose solution is the same as the one given in
Theorem~\ref{Theorem:optimal-solution-for-slab-MVCE-problem}.
\end{theorem}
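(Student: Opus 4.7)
The plan is to justify the NLP formulation and then solve it by KKT analysis. By Lemma~\ref{Lemma:invariance}, the MVCE of $\slab$ is an ellipsoid of the form $E(X,c)$ with $X = \diag(a,b,\ldots,b)$ and $c = \tau e_1$. Since $\det X = ab^{n-1}$, minimizing $-\log\det X$ amounts to minimizing $-\ln a - (n-1)\ln b$. For a point $(y,z) \in \R \times \R^{n-1}$ with $\alpha \leq y \leq \beta$ and $y^2 + \|z\|^2 \leq 1$, the containment in $E(X,c)$ reads $a(y-\tau)^2 + b\|z\|^2 \leq 1$, and maximizing the left side over admissible $z$ reduces the coverage condition to the single--variable inequality
\[
g(y) := a(y-\tau)^2 + b(1-y^2) - 1 \leq 0,\qquad y\in[\alpha,\beta].
\]

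Next, I would show that at the MVCE one has both $a \geq b$ and $g(\alpha) = g(\beta) = 0$. The contact points of $E(X,c)$ with $\slab$ lie in $\partial\mathit{B}_n \cap \partial E \cap \slab$, so they take the form $(y,z)$ with $y^2+\|z\|^2=1$ and $g(y) = 0$. Because of the $O_{n-1}$--symmetry established in Lemma~\ref{Lemma:invariance}, the contact points whose leading coordinate is a given root $y_0$ of $g$ form the $(n{-}2)$--sphere $\{(y_0,z): \|z\|^2 = 1-y_0^2\}$, which lies inside the hyperplane $\{x_1 = y_0\}$. By Corollary~\ref{Corollary:contact-points-of-MVCE-cannot-lie-in-any-halfspace} the contact points cannot all lie in a closed halfspace through the center $\tau e_1$, so $g$ must vanish at at least two distinct points of $[\alpha,\beta]$. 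Since $g$ is a quadratic in $y$, either it is convex with roots exactly $\alpha$ and $\beta$ (forcing $a > b$ and $g(\alpha) = g(\beta) = 0$), or it is identically zero (forcing $a = b$, $\tau = 0$, and $a = b = 1$). In either case the two equality constraints in \eqref{problem:min-NLP-for-MVCE-of-slab} hold: their difference $g(\alpha) - g(\beta) = 0$ simplifies to $2a\tau = (a-b)(\alpha+\beta)$, and substituting this into $g(\alpha) = 0$ yields $a\tau^2 + b - 1 = \alpha\beta(a-b)$. Conversely, any triple $(a,b,\tau)$ satisfying these equalities together with $a \geq b$ makes $g$ a convex or linear quadratic vanishing at $\alpha$ and $\beta$, hence $g \leq 0$ on $[\alpha,\beta]$. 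Thus the feasible set of \eqref{problem:min-NLP-for-MVCE-of-slab} is a subset of the invariant covering ellipsoids and contains the MVCE, so by uniqueness of the MVCE it is also the unique minimizer of \eqref{problem:min-NLP-for-MVCE-of-slab}.

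Finally, I would solve \eqref{problem:min-NLP-for-MVCE-of-slab} through its KKT conditions. Introducing multipliers $\mu_1, \mu_2$ for the equalities and $\eta \geq 0$ for the inequality $b - a \leq 0$, stationarity in $\tau$ gives $\mu_1 = -2\mu_2\tau$, and substituting this into stationarity in $a$ and $b$ produces, after using the two equalities to eliminate $\alpha+\beta$ and $\alpha\beta$, the additional relation $nab(1-\tau^2) = (n-1)a + b$, which is equivalent to $1/a = n(\tau-\alpha)(\beta-\tau)$. In the active case $\eta > 0$, $a = b$ together with the equalities forces $\tau = 0$ and $a = b = 1$; this ball covers $\slab$ precisely when $\alpha\beta \leq -1/n$, which recovers case (i). In the inactive case $a > b$, the three relations match precisely the system \eqref{eq:a-b-tau} of Lemma~\ref{Lemma:a-neq-b}; solving this system splits into the symmetric subcase $\alpha + \beta = 0$, which immediately yields case (ii), and the generic subcase $\alpha + \beta \neq 0$, where the quadratic equation \eqref{eq:quadratic-equation-for-tau} arises and the root with $\alpha < \tau < \beta$ is singled out by the same sign estimate on the discriminant as in the proof of Lemma~\ref{Lemma:a-neq-b}, reproducing case (iii). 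The main obstacle will be the bookkeeping of the algebraic elimination, particularly verifying that the three parameter regimes in Theorem~\ref{Theorem:optimal-solution-for-slab-MVCE-problem} line up exactly with activity/inactivity of the inequality and the sign of $\alpha + \beta$.
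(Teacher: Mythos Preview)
Your approach is essentially the paper's: reduce to the invariant form via Lemma~\ref{Lemma:invariance}, force $g(\alpha)=g(\beta)=0$ through Corollary~\ref{Corollary:contact-points-of-MVCE-cannot-lie-in-any-halfspace}, and then run KKT on the resulting three--variable NLP. Your derivation of the two equality constraints and of the extra relation $nab(1-\tau^2)=(n-1)a+b$ (equivalently $1/a=n(\tau-\alpha)(\beta-\tau)$) is correct and matches the paper's equations \eqref{eq:lambda-tau} and \eqref{eq:a-b-tau}.

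There is one concrete slip. The sentence ``this ball covers $\slab$ precisely when $\alpha\beta\leq-1/n$'' is false: by definition $\slab\subseteq B_n$ for every $\alpha,\beta$, and indeed the triple $(a,b,\tau)=(1,1,0)$ is always feasible for \eqref{problem:min-NLP-for-MVCE-of-slab} (with $a=b$ the quadratic $g$ is identically zero). The case boundary $\alpha\beta=-1/n$ cannot come from a feasibility or covering argument; it comes from the sign of your multiplier $\eta$. Once $a=b=1$ and $\tau=0$ are in hand, your relation $\mu_1=-2\mu_2\tau$ gives $\mu_1=0$, and the analogue of \eqref{eq:lambda-tau} gives $\mu_2=n$; substituting these back into the stationarity condition for $a$ yields $\eta=-n\alpha\beta-1$, so the KKT requirement $\eta\geq0$ is exactly $\alpha\beta\leq-1/n$. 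You need this computation explicitly, and correspondingly in the inactive branch you must \emph{derive} $\alpha\beta>-1/n$ from $a>b$ (as at the end of the proof of Lemma~\ref{Lemma:a-neq-b}) rather than merely cite it, to make the trichotomy with Theorem~\ref{Theorem:optimal-solution-for-slab-MVCE-problem} airtight.
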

\begin{proof}
It follows from Lemma~\ref{Lemma:invariance} that the optimal ellipsoid $E(X,c)$ has
the form $X=\diag(a,b,\dotsc,b)$ and $c=\tau e_1$. Thus, the feasibility condition
$B_{\alpha\beta}\subseteq E(X,c)$ translates into the condition that the quadratic
function
\[    g(u)
=     a(u-\tau)^2+b(1-u^2)-1
\]
is non--positive on the interval $I=[\alpha,\beta]$. Furthermore,
Corollary~\ref{Corollary:contact-points-of-MVCE-cannot-lie-in-any-halfspace}
implies that there exist at least two contact points, which translates into the condition
that the quadratic function $g(u)$ takes the value zero at two distinct points in
the interval $I$.  A moment's reflection shows that $g(u)$ must take the value
zero at the endpoints $\alpha$ and $\beta$. Consequently, the function $g$ is a
non--negative multiple of the function $(u-\alpha)(u-\beta)$, that is
\[  g(u)+\mu(u-\alpha)(\beta-u)=0,\quad\text{for some}\quad \mu\geq0, \]
giving $a-b=\mu\geq0$, $(\alpha+\beta)\mu-2a\tau=0$, and $a\tau^2+b-1-\alpha\beta\mu=0$.
If we eliminate $\mu$ from these constraints, we arrive at the optimization
problem \eqref{problem:min-NLP-for-MVCE-of-slab}.  We have for it the Fritz John
optimality conditions (for ordinary nonlinear programming)
\begin{equation}
\label{FJ-conds-for-NLP}
\begin{split}
    \lambda_1(\tau-\frac{\alpha+\beta}{2})+
    \lambda_2(\tau^2-\alpha\beta)-\lambda_3
&=
    \frac{\lambda_0}{a},\\
    \lambda_1(\frac{\alpha+\beta}{2})+
    \lambda_2(1+\alpha\beta)+\lambda_3
&=
    \frac{\lambda_0(n-1)}{b},\\
    \lambda_1+2\lambda_2\tau
&=  0,
\end{split}
\end{equation}
for some $(\lambda_0,\lambda_1,\lambda_2,\lambda_3)\neq0$, $\lambda_0\geq0$, $\lambda_3\geq0$,
and satisfying $\lambda_3(a-b)=0$.

Adding the first two equations above and substituting
$\lambda_1=-2\lambda_2\tau$ from the third equation gives
\[ \lambda_2(1-\tau^2)
=  \frac{\lambda_0}{a}+\frac{\lambda_0(n-1)}{b}.
\]
If $\lambda_0=0$, we would have $\lambda_2(1-\tau^2)=0$, and since $\tau\neq\pm1$,
$\lambda_2=0$, and eventually $\lambda_1=0=\lambda_3$, a contradiction.
Thus, we may assume that $\lambda_0=1$.

Then the above equation gives
\begin{equation}
\label{eq:lambda-tau}
    \lambda_2(1-\tau^2)
=   \frac{1}{a}+\frac{n-1}{b}.
\end{equation}

We solve for the decision variables $(a,b,\tau)$ discussing separately the cases
$a=b$ and $a\neq b$ in the above optimality conditions. If $a=b$, then the first
constraint in \eqref{problem:min-NLP-for-MVCE-of-slab} gives $\tau=0$ and the second
constraint gives $b=1$.  It remains to prove that $\alpha\beta\leq-1/n$.
The equation \eqref{eq:lambda-tau} gives $\lambda_2=n$, and
$\lambda_1=-2\lambda_2\tau=0$. Substituting these in the first equation in
\eqref{FJ-conds-for-NLP} yields $0\leq\lambda_3=-n\alpha\beta-1$, that is,
$\alpha\beta\leq-1/n$.

We now consider the case $a\neq b$ but $\alpha+\beta=0$.  The first constraint
in problem \eqref{problem:min-NLP-for-MVCE-of-slab} gives $\tau=0$ and the third
one gives $\lambda_3=0$. Consequently, the first two conditions in \eqref{FJ-conds-for-NLP}
can be written as $\lambda_2\beta^2a=1$ and $\lambda_2(1-\beta^2)b=n-1$, respectively,
and the second constraint in \eqref{problem:min-NLP-for-MVCE-of-slab} gives
$\beta^2a+(1-\beta^2)b=1$.  These imply $\lambda_2=n$, and $a=1/n\beta^2$,
$b=\frac{n-1}{n(1-\beta^2)}$. Lastly, the condition $a>b$ gives $\alpha\beta>-1/n$.

Finally, we treat the case $a>b$ and $\alpha+\beta\neq0$. Again we have $\lambda_3=0$ and
\begin{equation*}
    (n-1)\frac{-\tau^2+(\alpha+\beta)\tau-\alpha\beta}
    {-(\beta+\alpha)\tau+(1+\alpha\beta)}
=
    \frac{b}{a}
=
    \frac{\beta+\alpha-2\tau}{\beta+\alpha}.
\end{equation*}
Here the first equality is obtained by dividing the first equation
in \eqref{FJ-conds-for-NLP} by the second one and substituting
$\lambda_1=-2\lambda_2\tau$, and the second equality follows
from the first constraint in problem \eqref{problem:min-NLP-for-MVCE-of-slab}.
Consequently, $\tau$ satisfies the quadratic equality
\begin{equation}
\label{eq:eq-for-tau-in-NLP}
    (n+1)(\alpha+\beta)\tau^2
    -[n(\alpha+\beta)^2+2(1+\alpha\beta)]\tau
    +(\alpha+\beta)(1+n\alpha\beta)
=
    0,
\end{equation}
which is the same equation as \eqref{eq:quadratic-equation-for-tau}
in the proof of Lemma \ref{Lemma:a-neq-b}. Following similar arguments, we find that
\begin{equation*}
\begin{split}
     \tau
&=   \frac{n(\alpha+\beta)^2+2(1+\alpha\beta)-\sqrt{\Delta}}
       {2(n+1)(\alpha+\beta)}.
\end{split}
\end{equation*}
Solving the first and the second constraints in
\eqref{problem:min-NLP-for-MVCE-of-slab} for $a$, say by Cramer's rule, we find
\begin{equation*}
    a
=
    \frac{\alpha+\beta}
    {(\alpha+\beta)(\tau^2+1)-2\tau(1+\alpha\beta)}.
\end{equation*}
It follows from \eqref{eq:eq-for-tau-in-NLP} that the denominator on the
right hand side of the expression above equals
\begin{equation*}
    n(\alpha+\beta)^2\tau-n(\alpha+\beta)\tau^2-
    n(\alpha+\beta)\alpha\beta
=   n(\alpha+\beta)(\tau-\alpha)(\beta-\tau).
\end{equation*}
This gives
\begin{equation*}
    a
=
    \frac{1}{n(\tau-\alpha)(\beta-\tau)},\qquad
    b
=
    \frac{\alpha+\beta-2\tau}{\alpha+\beta}a.
\end{equation*}
Finally, the inequality $\alpha\beta>-1/n$ follows from the same argument
at the end of the proof of Lemma \ref{Lemma:a-neq-b}.
\end{proof}


\section{Determination of the maximum volume inscribed ellipsoid of a slab}
\label{sec:MVIE-of-a-slab}

In this section, we give explicit formulae for the maximum volume
inscribed ellipsoid of the slab $\slab$ in \eqref{eq:slab} using a
semi--infinite programming approach.  Without any loss of generality,
we again assume throughout this section that
$\beta^2\geq\alpha^2$.

It is convenient to set up this problem as the semi--infinite program
\[ \min\left\{-\ln\det(A): Ay+c\in\slab,\quad\forall\,y:\, ||y||=1\right\}, \]
in which we represent the inscribed ellipsoid as $E=c+A(B_n)$ where
$A$ is a symmetric, positive definite matrix with $\vol(E)=\omega_n\det A$.
Lemma~\ref{Lemma:invariance} implies that the optimal ellipsoid has
the form $A=\text{diag}(a,b,\ldots,b)$ and $c=\tau e_1$ for some
$a>0$, $b>0$, and $\tau$ in $\R$.  Writing $y=(u,z)$ in $\R\times\R^{n-1}$,
we can replace the above semi--infinite program with a far simpler one
\begin{equation}
\label{eq:SIP-for-MVIE-of-slab}
\begin{split}
 \min\quad &-\ln a-(n-1)\ln b,\\
 \st\quad  &-au-\tau\leq-\alpha,\\
           &au+\tau\leq\beta,\qquad\qquad\qquad\qquad\qquad\forall\, u\in[-1,1]\\
           &(au+\tau)^2+b^2(1-u^2)\leq1,
\end{split}
\end{equation}
in which the decision variables are $(a,b,\tau)$ and the index set is $[-1,1]$.

We make some useful observations before writing down the optimality conditions for
Problem~\ref{eq:SIP-for-MVIE-of-slab}. Theorem~\ref{Theorem:FJ-Theorem-For-SIP}
implies that the optimality conditions will involve at most three active constraints
with corresponding multipliers positive.  Clearly, the first constraint above can be
active only for $u=-1$ and the second one for $u=1$.

Next, if $\alpha>-1$, we claim that the third constraint is active for
at most one index value $u$.  Otherwise, the quadratic function
\[  g(u):= (au+\tau)^2+b^2(1-u^2)-1 \]
is non--positive on the interval $[-1,1]$ and equals zero for two distint
values of $u$. If the function $g(u)$ is actually a linear function (a=b),
then it is identically zero; otherwise, it is easy to see that $g(u)$ must
equal zero at the
endpoints $-1$ and $1$. In all cases, we have $g(-1)=g(1)=0$, so that
$(-a+\tau)^2=1=(a+\tau)^2$.  This gives $a-\tau=1=a+\tau$, since the other
possibilities give $a=0$ or $a=-1$. But then $a=1$, $\tau=0$, and
$1=a-\tau\leq-\alpha$, where the inequality expresses the feasibility of the
first inequality in Problem~\ref{eq:SIP-for-MVIE-of-slab}. We obtain $\alpha=-1$
and $\beta=1$, a contradiction. The claim is proved.

The following theorem is another major result of this paper.

\begin{theorem}
\label{Theorem:optimal-solution-for-slab-MVIE-problem}
The maximal inscribed ellipsoid $\ie(\slab)$ has the form $E=c+A(B_n)$ where
 $c=\tau e_1$, $\alpha<\tau<\beta$, $A=\diag(a,b,\ldots,b)$ with $a>0$, $b>0$,
satisfying the following conditions:\\
(i) If $\alpha=-\beta$, then
\begin{equation}
\label{eq:solution-i-max}
\tau=0,\qquad a=\beta,\qquad b=1.
\end{equation}
(ii) If $4n(1-\alpha^2)<(n+1)^2(\beta^2-\alpha^2)$, then
\begin{equation}
\label{eq:solution-ii-max}
\begin{split}
    \tau
&=  \frac{\alpha+\sqrt{\alpha^2+4n(1-\alpha^2)/(n+1)^2}}{2},\\
    a
&=  \tau-\alpha,\qquad
b^2
=   a(a+n\tau),
\end{split}
\end{equation}
(iii) If $4n(1-\alpha^2)\geq(n+1)^2(\beta^2-\alpha^2)$ and $\alpha\neq-\beta$, then
\begin{equation}
\label{eq:solution-iii-max}
\begin{split}
       \tau
&=     \frac{\beta+\alpha}{2},\qquad
       a
=      \frac{\beta-\alpha}{2},\\
       b^2
&=     a^2+
      \left(
          \frac{\beta^2-\alpha^2}
          {2(\sqrt{1-\alpha^2}-\sqrt{1-\beta^2})}
      \right)^2.
\end{split}
\end{equation}
\end{theorem}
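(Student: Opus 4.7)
The plan is to solve the semi-infinite program \eqref{eq:SIP-for-MVIE-of-slab} directly via the Fritz John optimality conditions of Theorem~\ref{Theorem:FJ-Theorem-For-SIP}, using the preliminary observations already made in the text immediately preceding the theorem. There are three scalar unknowns $(a,b,\tau)$ and three families of constraints. The first and second linear families can be active only at $u=-1$ and $u=+1$ respectively, yielding the active-set relations $a=\tau-\alpha$ and $a=\beta-\tau$; and the quadratic family $g(u):=(au+\tau)^2+b^2(1-u^2)-1\le 0$ is active at a single interior point $u_3\in(-1,1)$, as already shown. Writing $s:=au_3+\tau$ and normalizing the objective multiplier $\lambda_0=1$ (which one checks is forced, since $\lambda_0=0$ propagates through the three equations below and kills all multipliers), the Fritz John gradient equation reads
\begin{equation*}
-1/a+\lambda_1+\lambda_2+2\lambda_3 u_3 s=0,\qquad
-(n-1)/b+2\lambda_3 b(1-u_3^2)=0,\qquad
-\lambda_1+\lambda_2+2\lambda_3 s=0.
\end{equation*}
A direct inspection shows that constraint~$3$ alone, or no active constraint at all, cannot satisfy these equations, and that under the standing hypothesis $\beta^2\ge\alpha^2$ the configuration with only constraint~$2$ (together with~$3$) forces $s<0$ and $\lambda_1<0$. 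Exactly three compatible configurations survive, matching the three cases of the theorem.

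\emph{Case (iii).} Both linear constraints are active, so $\tau=(\alpha+\beta)/2$ and $a=(\beta-\alpha)/2$. The remaining freedom $b$ is maximized subject to $b^2\le\phi(u):=\bigl(1-(au+\tau)^2\bigr)/(1-u^2)$ on $(-1,1)$, so $b^2=\phi(u_3)$ at a stationary point. The stationarity condition $\phi'(u_3)=0$ gives $u_3 b^2=as$; combining with the active quadratic $s^2+b^2(1-u_3^2)=1$ and eliminating $u_3,s$ yields the quadratic $b^4+(\tau^2-a^2-1)b^2+a^2=0$. Substituting the explicit values of $a,\tau$ factorizes the discriminant as $(1-\alpha^2)(1-\beta^2)$, and the larger root rearranges to the form in \eqref{eq:solution-iii-max}.

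\emph{Case (ii).} Only constraint~$1$ is active, so $a=\tau-\alpha$ and $\lambda_2=0$. The third Fritz John equation gives $\lambda_1=2\lambda_3 s$, the first then yields $2\lambda_3 s(1+u_3)=1/a$, and eliminating $\lambda_3$ through the second equation produces $(n-1)as=b^2(1-u_3)$. Combined with the active quadratic and the maximality relation $u_3 b^2=as$, all auxiliary variables are removed to leave the single quadratic $ns^2-(n-1)\alpha s-1=0$ for $s$; its positive root reorganizes as $s=((n+1)\tau-\alpha)/n$, and back-substitution gives \eqref{eq:solution-ii-max}. \emph{Case (i)} $\alpha=-\beta$ is immediate: Lemma~\ref{Lemma:invariance} gives $\tau=0$ from the symmetry $\slab=-\slab$, the two linear constraints then both force $a=\beta$, and the quadratic constraint at $u=0$ forces $b=1$.

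The principal obstacle is not the calculations themselves but showing cleanly that the threshold $4n(1-\alpha^2)\lessgtr(n+1)^2(\beta^2-\alpha^2)$ separates Cases (ii) and (iii) exactly. This reduces to checking that the Case~(ii) solution is feasible (i.e.\ $a+\tau\le\beta$, equivalently $2\tau-\alpha\le\beta$) precisely when $4n(1-\alpha^2)\le(n+1)^2(\beta^2-\alpha^2)$: squaring the identity $2\tau-\alpha=\sqrt{\alpha^2+4n(1-\alpha^2)/(n+1)^2}$ and rearranging gives the equivalence. The uniqueness result Theorem~\ref{Theorem:IE-uniqueness-and-sufficiency} then pins down the correct case globally, so that the formulas found in the three regimes are indeed the maximum volume inscribed ellipsoid.
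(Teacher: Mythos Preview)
Your approach is essentially the paper's: Fritz John conditions for the three--variable SIP \eqref{eq:SIP-for-MVIE-of-slab}, followed by an analysis of which constraints are active. The paper organizes this as a chain of structural claims (the first linear constraint is \emph{always} active, then $0\le u\le 1/n$, then the characterization of when the second linear constraint binds), whereas you enumerate active--set configurations directly and solve each. Both routes lead to the same equations---your relation $u_3 b^2=as$ together with the tangency of $g$ is precisely the paper's \eqref{eq:for-g-in-FJ-conditions-for-MVIE-of-slab}, and your quadratic $ns^2-(n-1)\alpha s-1=0$ is equivalent to the paper's \eqref{eq:quadratic-equation-for-u} after the substitution $s=a/n+\tau$, $a=\tau-\alpha$.

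Two small repairs are needed. First, your dismissal of the configuration ``only constraints~2 and~3'' as forcing ``$\lambda_1<0$'' is misphrased: $\lambda_1=0$ there by complementarity. The actual contradiction (the paper's second claim) is that the first and third stationarity equations give $s<0$, hence $u_3<0$ via $u_3b^2=as$; but $a+\tau=\beta$ and $-a+\tau>\alpha$ give $\tau>(\alpha+\beta)/2\ge0$, and together with $b>a$ this forces $u_3=a\tau/(b^2-a^2)>0$. Second, in Case~(i) the quadratic constraint at $u=0$ only yields $b\le1$, not $b=1$; you need the one--line observation that $b=1$ is feasible (since then $g(u)=(\beta^2-1)u^2\le0$) and therefore volume--maximizing. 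With these fixes your argument is complete, and your use of Theorem~\ref{Theorem:IE-uniqueness-and-sufficiency} to pin down the regime boundary is legitimate.
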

\begin{proof}
If $\alpha=-1$, then $\beta=1$ and the optimal ellipsoid is the unit ball $B_n$,
which agrees with (i) of the theorem. We assume in the rest of the proof that $\alpha>-1$.

We saw above that each of the constraints in \ref{eq:SIP-for-MVIE-of-slab} can be active
for at most one value of $u$ in $[-1,1]$, and the first and second constraints for $u=-1$
and $u=1$, respectively.  Then Theorem~\ref{Theorem:FJ-Theorem-For-SIP} gives the
optimality conditions
\begin{equation}
\label{eq:FJ-conditions-for-MVIE-of-slab}
\begin{split}
      \lambda_1+\lambda_2+\delta(au+\tau)u
&=    \frac{\lambda_0}{a},\\
      \delta b(1-u^2)
&=    \frac{(n-1)\lambda_0}{b},\quad u\in[-1,1],\\
      -\lambda_1+\lambda_2+\delta(au+\tau)
&=    0,
\end{split}
\end{equation}
where the non--negative multipliers $(\lambda_0,\lambda_1,\lambda_2,\delta/2)\neq0$
correspond to the objective function, and the first, second, and third (active)
constraints in \ref{eq:SIP-for-MVIE-of-slab}, respectively.

Our \emph{first} claim is that $\lambda_0>0$. Otherwise, the second equation in
\eqref{eq:FJ-conditions-for-MVIE-of-slab} gives $\delta=0$ or $u=\mp1$.
If $\delta=0$, then the first equation gives the contradiction $\lambda_1=\lambda_2=0$.
If $u=-1$, then we have $\mp1=a-\tau\leq-\alpha<1$, implying $a-\tau=-1$ or $\tau=a+1>1$,
another contradiction. If $u=1$, we have $\mp1=a+\tau$. If $a+\tau=1$, then
the first equation in \eqref{eq:FJ-conditions-for-MVIE-of-slab} gives $\lambda_1+\lambda_2+\delta=0$,
that is, $\lambda_1=\lambda_2=\delta=0$, a contradiction.  If $a+\tau=-1$, then $\tau=-a-1<-1$,
yet another contradiction.  The claim is proved.  We set $\lambda_0=1$.

The second equation in \eqref{eq:FJ-conditions-for-MVIE-of-slab} gives $\delta>0$,
and that $g(u)=0$ for some $u$ in $(-1,1)$ and negative elsewhere on $[-1,1]$.
Note this means that $u$ is the global maximum as well as the unique root of $g$ on
$\R$, leading to the conditions
\begin{equation}
\label{eq:for-g-in-FJ-conditions-for-MVIE-of-slab}
     b>a,\quad
     u
=    \frac{a\tau}{b^2-a^2},\quad
     b^2\tau^2
=    (1-b^2)(b^2-a^2),
\end{equation}
where the last equation expresses the fact that the discriminant of
$g$ equals zero. We will also have occasion to use the equality
\begin{equation}
\label{eq:au+tau=bsq-tau-over-(bsq-asq)}
     au+\tau
=    a\frac{a\tau}{b^2-a^2}+\tau
=    \frac{b^2\tau}{b^2-a^2}.
\end{equation}

Our \emph{second} claim is  that
\begin{equation}
\label{eq:-a+tau=alpha}
-a+\tau=\alpha.
\end{equation}
If not, then $-a+\tau>\alpha$, $\lambda_1=0$, and the third equation in
\eqref{eq:FJ-conditions-for-MVIE-of-slab} gives $\lambda_2=-\delta(au+\tau)$.
Substituting this into the first equation in
\eqref{eq:FJ-conditions-for-MVIE-of-slab} leads to $\delta(au+\tau)(u-1)=a^{-1}$,
and consequently to $au+\tau<0$.  But then $\lambda_2>0$ and $a+\tau=\beta$,
which together with $-a+\tau>\alpha$ gives $\tau>(\alpha+\beta)/2\geq0$,
that is, $\tau>0$.  Moreover, the second equation in
\eqref{eq:for-g-in-FJ-conditions-for-MVIE-of-slab} gives $u>0$,
and this leads to the contradiction that $-\lambda_2=\delta(au+\tau)>0$.
The claim is proved.

Next, we have
\begin{equation*}
\begin{split}
       \frac{1}{a}-2\lambda_2
&=
       \delta(au+\tau)(u+1)
=      \frac{n-1}{b^2(1-u^2)}\cdot\frac{b^2\tau}{b^2-a^2}(u+1)\\
&=     \frac{(n-1)\tau}{(1-u)(b^2-a^2)}
=      \frac{(n-1)u}{a(1-u)},
\end{split}
\end{equation*}
where the first equality is obtained by adding the first and third equations in
\eqref{eq:FJ-conditions-for-MVIE-of-slab}, the second equality follows by substituting
the value of $\delta$ from the second equation in \eqref{eq:FJ-conditions-for-MVIE-of-slab}
and the value of $au+\tau$ from \eqref{eq:au+tau=bsq-tau-over-(bsq-asq)}, and the
last equality follows by substituting the value $\tau/(b^2-a^2)=u/a$ from the first
equation in \eqref{eq:for-g-in-FJ-conditions-for-MVIE-of-slab}.  Therefore,
\[  \lambda_2=\frac{1-nu}{2a(1-u)}.  \]
Consequently, $u\leq1/n$ and $u=1/n$ if and only if $\lambda_2=0$.
Furthermore, we have $u\geq0$: if $u<0$, then $\tau<0$ by virtue of the first
equation in \eqref{eq:for-g-in-FJ-conditions-for-MVIE-of-slab}, so that $au+\tau<0$.
But then the third equation in \eqref{eq:FJ-conditions-for-MVIE-of-slab} gives
$\lambda_2>\lambda_1\geq0$, implying $a+\tau=\beta$. This and \eqref{eq:-a+tau=alpha}
give $\tau=(\alpha+\beta)\geq0$, a contradiction.  Therefore,
\begin{equation}
\label{eq:0<=u<=1/n}
0\leq u\leq\frac{1}{n},\quad\text{and}\quad
u=\frac{1}{n}\quad\Longleftrightarrow\quad\lambda_2=0.
\end{equation}

We can now prove part (i) of the theorem.  We first show that
\[ u=0\quad\Longleftrightarrow\quad \alpha=-\beta. \]
If $\alpha=-\beta$, then Lemma~\ref{Lemma:invariance} implies that $\tau=0$, which in
turn implies $u=0$.  Conversely, if $u=0$, then $\tau=0$ and $\lambda_2>0$, and we have
$-a+\tau=\alpha$ and $a+\tau=\beta$, leading to $0=\tau=(\beta+\alpha)/2$.
Consequently, \eqref{eq:-a+tau=alpha} gives $a=-\alpha=\beta$ and the second equation in
\eqref{eq:for-g-in-FJ-conditions-for-MVIE-of-slab} gives $b=1$.

We now consider the remaining cases $0<u\leq1/n$.  We note that
\begin{equation*}
     (au+\tau)u\tau
=    \frac{u(b^2\tau^2)}{b^2-a^2}
=    u(1-b^2)
=    u(1-a^2)-a\tau,
\end{equation*}
where the first equality follows from \eqref{eq:au+tau=bsq-tau-over-(bsq-asq)}
and last two equalities from \eqref{eq:for-g-in-FJ-conditions-for-MVIE-of-slab},
leading to a quadratic equation for $u$,
\begin{equation}
\label{eq:quadratic-equation-for-u}
(a\tau)u^2-(1-a^2-\tau^2)u+a\tau=0.
\end{equation}
Define $\varepsilon\geq0$ such that $a+\tau=\beta-\varepsilon=:\beta_\varepsilon$.
The equation $a+\tau=\beta_\varepsilon$ together with equation $-a+\tau=\alpha$
from \eqref{eq:-a+tau=alpha} give
\begin{equation}
\label{eq:values-of-a-and-tau}
     \tau
=     \frac{\beta_\varepsilon+\alpha}{2}>0,
      \quad
      a
=     \frac{\beta_\varepsilon-\alpha}{2}>0.
\end{equation}
Substituting these in \eqref{eq:quadratic-equation-for-u} gives another
quadratic equality for $u$,
\begin{equation}
\label{eq:for-u-in-MVIE-2}
(\beta_\varepsilon^2-\alpha^2)u^2
  -2(2-\beta_\varepsilon^2-\alpha^2)u
   +(\beta_\varepsilon^2-\alpha^2)=0.
\end{equation}
It is easy to verify, using \eqref{eq:0<=u<=1/n}, that
\begin{equation}
\label{eq:implications-for-lambda2}
\begin{split}
          \lambda_2>0
&\quad\Longleftrightarrow\quad
          0<u<\frac{1}{n}
\quad\Longleftrightarrow\quad
          (n+1)^2(\beta_\varepsilon^2-\alpha^2)<4n(1-\alpha^2),\\
          \lambda_2=0
&\quad\Longleftrightarrow\quad
          u=\frac{1}{n}
\qquad\quad\Longleftrightarrow\quad
          (n+1)^2(\beta_\varepsilon^2-\alpha^2)=4n(1-\alpha^2).\\
\end{split}
\end{equation}
Here the second equivalence on the first line follows because the quadratic equation
for $u$ in \eqref{eq:for-u-in-MVIE-2} has negative value at $u=1/n$. Since the
leading term of the quadratic function is positive, its two roots $r_1<r_2$ are positive,
their product is 1, $0<r_1<1/n$, and $r_2>n$.

We now make our \emph{third} and important claim that
\begin{equation}
\label{eq:a-plus-tau<beta-iff}
      a+\tau<\beta\quad\text{iff}\quad
      4n(1-\alpha^2)<(n+1)(\beta^2-\alpha^2).
\end{equation}
On the one hand, if $a+\tau=\beta$, then $\varepsilon=0$ and \eqref{eq:implications-for-lambda2}
gives $(n+1)(\beta^2-\alpha^2)<4n(1-\alpha^2)$ or $(n+1)(\beta^2-\alpha^2)=4n(1-\alpha^2)$,
depending on whether $\lambda_2>0$ or $\lambda_2=0$, respectively. In either case,
we have $(n+1)(\beta^2-\alpha^2)\leq 4n(1-\alpha^2)$.
On the other hand, if $a+\tau<\beta$, then $\lambda_2=0$, $\varepsilon>0$, and
\eqref{eq:implications-for-lambda2} gives
$(n+1)(\beta_\varepsilon^2-\alpha^2)=4n(1-\alpha^2)$, that is,
$4n(1-\alpha^2)<(n+1)(\beta^2-\alpha^2)$.  The claim is proved.

The computation of the decision variables $(a,b,\tau)$ in the cases (ii) and (iii) now becomes
a routine matter.  If $4n(1-\alpha^2)<(n+1)^2(\beta^2-\alpha^2)$,
then \eqref{eq:a-plus-tau<beta-iff} implies $a+\tau<\beta$, and we have $\lambda_2=0$,
$u=1/n$. Substituting the value $a=\tau-\alpha$ from \eqref{eq:-a+tau=alpha} into
\eqref{eq:quadratic-equation-for-u} gives the quadratic equation for $\tau$,
\[  (n+1)^2\tau^2-(n+1)^2\alpha\tau-n(1-\alpha^2)=0. \]
Since $\tau>0$, the feasible root is given by
\begin{equation*}
    \tau
=   (\alpha+\sqrt{\alpha^2+4n(1-\alpha^2)/(n+1)^2})/2.
\end{equation*}
Then \eqref{eq:-a+tau=alpha} gives $a=\tau-\alpha$ and
\eqref{eq:for-g-in-FJ-conditions-for-MVIE-of-slab} gives
$1/n=u=(a\tau)/(b^2-a^2)$, that is, $b^2=a^2+na\tau$.

If $4n(1-\alpha^2)\geq(n+1)^2(\beta^2-\alpha^2)$,
then \eqref{eq:a-plus-tau<beta-iff} and \eqref{eq:for-u-in-MVIE-2} give
\[
      u
=     \frac{\left(\sqrt{1-\alpha^2}\mp\sqrt{1-\beta^2}\right)^2}
      {\beta^2-\alpha^2}.
\]
It is easy to verify that the condition $u<1$ is equivalent to
$\sqrt{1-\beta^2}\mp\sqrt{1-\alpha^2}<0$, which is impossible if we choose the
plus sign.  Thus the feasible root is the one with the negative sign.
Finally, the first equation in \eqref{eq:for-g-in-FJ-conditions-for-MVIE-of-slab}
gives $b^2=a^2+(a\tau)/u$, or more explicitly the formula for $b^2$ in
\eqref{eq:solution-iii-max}.
\end{proof}

We end this section by reducing the semi--infinite program
\eqref{eq:SIP-for-MVIE-of-slab} to an ordinary
nonlinear programming problem.  However, we do not attempt to solve the
resulting program in order to save space. As we already noted, the linear
constraints, the first two inequalities in problem~\eqref{eq:SIP-for-MVIE-of-slab},
simply reduce to the constraints $a-\tau\leq-\alpha$ and $a+\tau\leq\beta$.
In order to reduce the quadratic inequality system to a set of ordinary
inequalities, we use a theorem of Luk\'acs characterizing the class of
non--negative polynomials on a given interval.  A simple inductive proof of
Luk\'acs's Theorem can be found in \cite{BrickmanSteinberg62}.  For a quadratic
polynomial $q(u)$ on the interval $I=[a,b]$, this theorem states that $q$ is
non--negative on $I$ if and only if there exist scalars $\alpha$, $\beta$,
and $\gamma\geq0$ such that
\begin{equation}
\label{eq:Lukacs-theorem-for-a-quadratic}
  q(u)=(\alpha u+\beta)^2+\gamma(u-a)(b-u).
\end{equation}
Since the proof is short and simple in this case, we give it, following
\cite{BrickmanSteinberg62}. Note that the polynomial $p(u):=q(u)-l(u)^2$,
where $l(u)=[\sqrt{q(a)}(u-b)+\sqrt{q(b)}(u-a)]/(b-a)$, satisfies $p(a)=0=p(b)$,
so that there exists a constant $\gamma$ such that $p(u)=\gamma(u-a)(b-u)$.
Clearly, \eqref{eq:Lukacs-theorem-for-a-quadratic} holds true if we can show that
$\gamma\geq0$.
Note that $l(a)\leq0$ and $l(b)\geq0$ so that $l(u)=d(u-r)$ for some constant $d$
and $r\in[a,b]$.  We have $0\leq q(u)=d^2(u-r)^2+\gamma(u-a)(b-u)$, or
\[ -d^2(u-r)^2 \leq \gamma(u-a)(b-u),\quad \forall\, u\in[a,b]. \]
If $r$ is in $(a,b)$, then choosing $u=r$ gives $\gamma\geq0$.  If $r=a$ or $r=b$, then
choosing $u$ near $r$ gives $\gamma\geq0$.  This completes the proof.

We remark that the result we just proved also follows from the one dimensional case
of the S--procedure, see \cite{Polyak99} or \cite{PolikTerlaky07}.

Applying this result to our quadratic function $-q(u)=-(au+\tau)^2-b^2(1-u^2)+1$ which
is non--negative on the interval $[-1,1]$, we see that there exists scalars $c,\,d$, and
$\gamma\geq0$ such that
\[ (au+\tau)^2+b^2(1-u^2)-1 = -(cu+d)^2+\gamma(u^2-1), \]
that is, $a^2-b^2+c^2-\gamma=0$, $a\tau+cd=0$, and $b^2+\tau^2+d^2+\gamma=1$.
Therefore, the problem of finding $\ie(B_{\alpha\beta})$ reduces to the nonlinear
programming problem
\begin{equation*}
\label{eq:NLP-for-MVIE-of-slab}
\begin{split}
 \min\quad &-\ln a-(n-1)\ln b,\\
 \st\quad  &-a+\tau\geq\alpha,\\
           &a+\tau\leq\beta,\\
           &a^2-b^2+c^2-\gamma=0,\\
           &a\tau+cd=0,\\
           &b^2+\tau^2+d^2+\gamma=1,\\
           &\gamma\geq0,
\end{split}
\end{equation*}
in which the decision variables are $(a,b,\tau,c,d,\gamma)$ and $\alpha,\,\beta$ are
parameters.


\section{Determination of the minimum volume covering ellipsoid of a truncated second
order cone or a cylinder}
\label{sec:MVCE-of-convex-combination}

In this section, one of the problems we are interested in is finding the minimum volume
ellipsoid covering the truncated second order cone
\[  K = \{x=(x_1,\bar{x})\in\R\times\R^{n-1}: ||B(\bar{x}-c)||\leq x_1,\  a\leq x_1\leq b\}, \]
where $B$ is an invertible matrix in $\R^{(n-1)\times(n-1)}$ and $0\leq a<b$ are constants.  By an affine
change of $\bar{x}$, we may assume that $c=0$ and $B=I_{n-1}$.  We claim that by further
affine change of variables, we can reduce the convex body $K$ to have the form
\[  \yayik:=\co(S_\alpha\cup S_\beta), \]
where $S_\alpha:=\{x\in B_n: x_1=\alpha\}$, $S_\beta:=\{x\in B_n: x_1=\beta\}$, and
$-1\leq\alpha<\beta\leq1$. Consider the ball $B$ in $\R^n$ with center $(a+b)e_1$ and radius
$\sqrt{a^2+b^2}$. The slice $P_a:=\{(a,\bar{x})\in\R^n: ||\bar{x}||\leq a\}\subset K$ lies in
$B$, since $||(a,\bar{x})-(a+b,0)||^2=b^2+||\bar{x}||^2\leq a^2+b^2$, and similarly $P_b\subset B$.
A further translation and then scaling transforms $B$ into $B_n$. This proves the claim.
Conversely, if $-\alpha\neq\beta$, $\yayik$ can be viewed as a truncated second order cone.

The other problem we are interested in is finding the minimum volume ellipsoid covering a
cylinder. If $-\alpha=\beta$, then $\yayik$ is clearly a cylinder. Conversely, any cylinder
can be transformed into such a $\yayik$ by an affine transformation.  Consequently, the
$\ce(\yayik)$ problem formulates both problems at the same time.

\begin{theorem}
\label{Theorem:optimal-solution-for-covex-combi-slice-MVCE-problem}
The ellipsoid $\ce(\yayik)$ has the form $E(X,c)$ where $c=\tau e_1$ and $X=\diag(a,b,\ldots,b)$,
where the parameters $a>0$, $b>0$, and $\alpha<\tau<\beta$ are given as follows:\\
(i) If $\alpha\beta\,=\,-1/n$, then
\begin{equation}
    \label{eq:Solution-i-slice}
    \tau=0,\,\text{ and }\,a=b=1.
\end{equation}
(ii) If $\alpha+\beta\,=\,0$ and $\alpha\beta\,\not=\,-1/n$, then
\begin{equation}
    \label{eq:Solution-ii-slice}
    \tau=0,\quad
    a=\frac{1}{n\beta^2},\quad
    b=\frac{n-1}{n(1-\beta^2)}.
\end{equation}
(iii) If $\alpha+\beta\,\neq0$ and $\alpha\beta\,\not=\,-1/n$, then
\begin{equation}
\label{eq:Solution-iii-slice}
\begin{split}
    \tau
&=  \frac{n(\beta+\alpha)^2+2(1+\alpha\beta)-
    \sqrt{\Delta}}{2(n+1)(\beta+\alpha)},\\
    a
&=  \frac{1}{n(\tau-\alpha)(\beta-\tau)},\quad
    b
=   \frac{1-a(\tau-\alpha)^2}{1-\alpha^2},
\end{split}
\end{equation}
where $\Delta=n^2(\beta^2-\alpha^2)^2+4(1-\alpha^2)(1-\beta^2)$.
\end{theorem}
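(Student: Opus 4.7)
The plan is to follow the outline of the proof of Theorem~\ref{Theorem:optimal-solution-for-slab-MVCE-problem}, with the key simplification coming from the restricted structure of the extreme points of $\yayik$. First, I would observe that the rotations about the $e_1$-axis used in Lemma~\ref{Lemma:invariance} preserve the two slices $S_\alpha$ and $S_\beta$, hence preserve $\yayik$ (and likewise for the reflection $x_1 \mapsto -x_1$ when $\alpha = -\beta$). Consequently, the invariance argument of that lemma goes through verbatim and forces the optimal ellipsoid to have the form $E(X,c)$ with $c = \tau e_1$ and $X = \diag(a,b,\ldots,b)$, with $a,b > 0$.

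Next, I would apply the Fritz John conditions~\eqref{eq:FJ-for-MVCE-general}. The extreme points of $\yayik$ are exactly the two boundary circles $\{(\alpha,z): \|z\|^2 = 1-\alpha^2\}$ and $\{(\beta,z): \|z\|^2 = 1-\beta^2\}$, so Remark~\ref{Remark:contact-points-of-MVCE-2} forces every contact point $u_i = (y_i, z_i)$ to satisfy $y_i \in \{\alpha,\beta\}$ and $\|z_i\|^2 = 1 - y_i^2$. A short convexity argument, using the fact that the maximum of $\|\bar x\|$ in the slice $\yayik \cap \{x_1 = y\}$ is a linear function of $y$, reduces the containment $\yayik \subseteq E(X,c)$ to just the two inequalities $a(\alpha-\tau)^2 + b(1-\alpha^2) \leq 1$ and $a(\beta-\tau)^2 + b(1-\beta^2) \leq 1$.

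For the case $a \neq b$, the derivation of Lemma~\ref{Lemma:a-neq-b} transports almost verbatim: Corollary~\ref{Corollary:contact-points-of-MVCE-cannot-lie-in-any-halfspace} rules out all contact points having a common value of $y_i$, so both $y_i = \alpha$ and $y_i = \beta$ occur, both feasibility inequalities are active, and together with the identity $1/a = n(\tau-\alpha)(\beta-\tau)$ obtained exactly as in the slab proof this yields the formulas~\eqref{eq:Solution-ii-slice} and~\eqref{eq:Solution-iii-slice}, valid whenever $\alpha\beta \neq -1/n$. If instead $a = b$, subtracting the two active feasibility equations (as in Lemma~\ref{Lemma:a=b}) forces $\tau = 0$ and $a = b = 1$; then computing the $(1,1)$-entry of $\sum_i \lambda_i u_i u_i^T = I_n$ and using $\sum_i \lambda_i u_i = 0$ and $\sum_i \lambda_i = n$ to solve for the pooled weights $\Lambda_\alpha, \Lambda_\beta$ at the two circles yields $-n\alpha\beta = 1$.

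The main obstacle, and the essential difference from Theorem~\ref{Theorem:optimal-solution-for-slab-MVCE-problem}, is precisely this unit-ball case. For the slab the unit ball is optimal throughout the range $\alpha\beta \leq -1/n$ because contact points can be distributed freely over a whole strip of the sphere; here they are locked to the two extreme circles, so the linear system for $(\Lambda_\alpha, \Lambda_\beta)$ admits a non-negative solution only at $\alpha\beta = -1/n$ exactly. Once this boundary case is pinned down, the remaining algebra is identical to that in the proofs of Lemmas~\ref{Lemma:a-neq-b-implies-u-equals-a-or-b} and~\ref{Lemma:a-neq-b}, and the three cases in the statement follow.
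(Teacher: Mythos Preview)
Your proposal is correct and follows essentially the same route as the paper's proof: reduce to the diagonal form via the invariance lemma, note that extreme points of $\yayik$ lie on the two circles so every contact point has $y_i\in\{\alpha,\beta\}$, write down the same Fritz John system as for the slab, and recycle the algebra of Lemmas~\ref{Lemma:a=b}--\ref{Lemma:a-neq-b}, with the key change that the $a=b$ case now forces the \emph{equality} $\alpha\beta=-1/n$ rather than an inequality.

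Two small remarks. First, your reduction of the feasibility constraint is more elaborate than needed: since $\yayik=\co(S_\alpha\cup S_\beta)$ and $E(X,c)$ is convex, the containment $\yayik\subseteq E(X,c)$ is equivalent to $S_\alpha\cup S_\beta\subseteq E(X,c)$, which is exactly the two inequalities at $y=\alpha,\beta$; no slice-radius computation is required. Second, in the $a=b$ case you should state explicitly that both circles carry contact points (the argument via Corollary~\ref{Corollary:contact-points-of-MVCE-cannot-lie-in-any-halfspace}, or equivalently $\tau\in\co\{y_i\}$ together with $1/a=\sum_i\lambda_i(y_i-\tau)^2>0$, works here just as in the $a\neq b$ case); only then are both feasibility equations known to be active, allowing the subtraction you describe.
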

\begin{proof}
It is clear that $\Aut(\yayik)\supseteq\Aut(\slab)$, so that Lemma \ref{Lemma:invariance} implies
$X=diag(a,b,...,b)$ and $c=\tau e_1$ with $a>0$, $b>0$ and $\tau\in\R$.
Let $\{u_i\}_{i=1}^k$ be the contact points of the optimal ellipsoid with $\yayik$.
Writing $u_i=(y_i,z_i)\in\R\times\R^{n-1}$, we have that $y_i$ is either $\alpha$ or $\beta$, and
$1=||u_i||^2=y_i^2+||z_i||^2$, that is, $||z_i||^2=1-y_i^2$.  Since
$  (u_i-c)(u_i-c)^T
=  \begin{bmatrix}
      (y_i-\tau)^2  &(y_i-\tau)z_i^T\\
      (y_i-\tau)z_i &z_iz_i^T
   \end{bmatrix}$,
Theorem \ref{Theorem:FJ-conditions-for-MVCE-SIP} yields the following optimality conditions:
\begin{eqnarray}
           \tau
&=&\
           \frac{1}{n}\sum_{i=1}^k\lambda_iy_i,\quad
           0
=
           \sum_{i=1}^k\lambda_iz_i,\quad
           \sum_{i=1}^k\lambda_i
=          n,\label{eq:FJ-for-slice-MVCE-1} \\
           \frac{1}{a}
&=&\
           \sum_{i=1}^k\lambda_i(y_i-\tau)^2,\quad
           0
=
           \sum_{i=1}^k\lambda_i(y_i-\tau)z_i,\quad
           \frac{1}{b}I_{n-1}
=          \sum_{i=1}^k\lambda_iz_iz_i^T,
           \label{eq:FJ-for-slice-MVCE-2}\\
           0
&=&\
           a(y_i-\tau)^2+b(1-y_i^2)-1,\quad i=1,\ldots,k,
           \label{eq:FJ-for-slice-MVCE-3}\\
           0
&\geq&\    a(y-\tau)^2+b(1-y^2)-1,\quad \text{for } y\in\{\alpha,\beta\}.
           \label{eq:FJ-for-slice-MVCE-4}
\end{eqnarray}
Here the last line expresses the feasibility condition $\yayik\subseteq E(X,c)$,
since $\yayik\subseteq E(X,c)$ if and only if
$\partial S_\alpha\cup\partial S_\beta\subseteq E(X,c)$.

The conditions \eqref{eq:FJ-for-slice-MVCE-1}--\eqref{eq:FJ-for-slice-MVCE-4}
characterize the ellipsoid $\ce(\yayik)$. Since the ellipsoid is unique, its parameters
$(a,b,\tau)$ are unique and can be recovered from the above conditions.

The same arguments in Lemma~\ref{Lemma:a-neq-b} applies here and gives the
equation \eqref{eq:a-b-tau},
\begin{equation}
\label{eq:a-b-tau-yayik}
    1
=  a(\alpha-\tau)^2+b(1-\alpha^2),\quad
    \tau
=   \left(1-\frac{b}{a}\right)\cdot\frac{\alpha+\beta}{2},\quad
    \frac{1}{a}
=  n(\tau-\alpha)(\beta-\tau),
\end{equation}
which we again use to compute the variables $a$, $b$, and $\tau$.
If $a=b$ in the optimal ellipsoid, \eqref{eq:a-b-tau-yayik}
immediately gives $\tau=0$, $a=b=1$, and $\alpha\beta=-1/n$.

Next, if $a\neq b$ and $\alpha=-\beta$, then \eqref{eq:a-b-tau-yayik} gives $\tau=0$,
$a=1/(n\alpha^2)$ and $b=(n-1)/(n(1-\alpha^2))$. Furthermore, if we have $a=1=n\alpha^2$,
then we obtain a contradiction since $b=(n-1)/(n-1)=1=a$. Thererefore, $a\neq1$ and the
last equation in \eqref{eq:a-b-tau-yayik} gives $\alpha\beta\neq-1/n$.

Lastly, $a\neq b$ and $\alpha\neq-\beta$, then the same argument in
Lemma~\ref{Lemma:a-neq-b} gives the quadratic equation
\eqref{eq:quadratic-equation-for-tau} for $\tau$,
\[
    (n+1)(\alpha+\beta)\tau^2
     -\left(n(\alpha+\beta)^2+2(1+\alpha\beta)\right)\tau
      +(\alpha+\beta)(1+n\alpha\beta)=0,
\]
and the resulting equations in \eqref{eq:Solution-iii-slice} for $(a,b,\tau)$.
Since the middle equation in \eqref{eq:a-b-tau-yayik} implies $\tau\neq0$, we have
\[
    0
\neq
    (n(\alpha+\beta)^2+2(1+n\alpha\beta))^2-\Delta
=
    4(n+1)(\alpha+\beta)^2(1+n\alpha\beta),
\]
that is, $\alpha\beta\neq-1/n$.
\end{proof}

\bibliography{Ellipsoid}

\end{document}